\newcommand{\bel}[1]{\begin{equation}\label{#1}}
\newcommand{\be}{\begin{equation}}
\newcommand{\ba}{\begin{eqnarray}}
\newcommand{\ea}{\end{eqnarray}}
\newcommand{\rf}[1]{(\ref{#1})}
\newcommand{\qe}{\end{equation}}
\newcommand{\R}{{\mathbb R}}
\newcommand{\N}{{\mathbb N}}
\newtheorem{thesis}{Thesis}
\newcommand{\btl}[1]{\begin{thesis}\label{#1}}
\newcommand{\et}{\end{thesis}}
\theoremstyle{theorem}
\newtheorem{theo}{Theorem}[section]
\newtheorem{satz}{Proposition}[section]
\theoremstyle{corollary}
\newtheorem{coro}{Corollary}[section]
\theoremstyle{lemma}
\newtheorem{lemma}{Lemma}[section]
\theoremstyle{definition}
\newtheorem{defi}{Definition}[section]
\theoremstyle{proof}
\theoremstyle{remark}
\newtheorem*{rem}{Remark}
\theoremstyle{definition}
\newtheorem{example}{Example}
\newcommand{\g}{\mathbb{G}}
\newcommand{\gu}{\mathbb{G}^{\mathrm{u}}}
\newcommand{\gp}{\mathbb{G}^+}
\newcommand{\gpu}{\mathbb{G}^{\mathrm{u}+}}
\newcommand{\gcf}{\mathbb{G}^{\mathrm{ac}}}
\newcommand{\D}{\Delta}
\begin{document}
\title{Normalized graph Laplacians for directed graphs}
\author{Frank Bauer}
\address{Max Planck Institute for Mathematics in the Sciences\\
Leipzig 04103, Germany.} \email{Frank.Bauer@mis.mpg.de}

\begin{abstract}
We consider the normalized Laplace operator for directed graphs
with positive and negative edge weights. This generalization of
the normalized Laplace operator for undirected graphs is used to
characterize directed acyclic graphs. Moreover, we identify
certain structural properties of the underlying graph with
extremal eigenvalues of the normalized Laplace operator. We prove
comparison theorems that establish a relationship between the
eigenvalues of directed graphs and certain undirected graphs. This
relationship is used to derive eigenvalue estimates for directed
graphs. Finally we introduce the concept of neighborhood graphs
for directed graphs and use it to obtain further eigenvalue
estimates.
\end{abstract}
\keywords{directed graphs, normalized graph Laplace operator,
eigenvalues, directed acyclic graphs, neighborhood graph}
%\subjclass{05C20, 05C22, 05C50}
\thanks{2010 \textit{Mathematics Subject Classification.} 05C20, 05C22, 05C50}
\maketitle \textbf{To appear in: Linear Algebra and its
Applications}
 \tableofcontents
\section{Introduction}
For undirected graphs with nonnegative weights, the normalized
graph Laplace operator $\D$ is a well studied object, see
e.~g.~the monograph \cite{Chung97}. In addition to its
mathematical importance, the spectrum of the normalized Laplace
operator has various applications in chemistry and physics.
However, it is not always sufficient to study the normalized
Laplace operator for undirected graphs with nonnegative weights.
In many biological applications, one naturally has to consider
directed graphs with positive and negative weights \cite{Bauer10}.
For instance, in a neuronal network only the presynaptic neuron
influences the postsynaptic one, but not vice versa. Furthermore,
the synapses can be of inhibitory or excitatory type. Inhibitory
and excitatory synapses enhance or suppress, respectively, the
activity of the postsynaptic neuron and thus the directionality of
the synapses and the existence of excitatory and inhibitory
synapses crucially influence the dynamics in neuronal networks
\cite{Bauer10}. Hence, a realistic model of a neuronal network has
to be a directed graph with positive and negative weights in which
the neurons correspond to the vertices and the excitatory and
inhibitory synaptic connections are modelled by directed edges
with positive and negative weights, respectively.

In contrast to undirected graphs not much is known about
normalized Laplace operators for directed graphs. In
\cite{Chungdirected} Chung studied a normalized Laplace operator
for strongly connected directed graphs with nonnegative weights.
This Laplace operator is defined as a self-adjoint operator using
the transition probability operator and the Perron
vector\footnote{A similar construction is used in \cite{Wu05} to
study the algebraic connectivity of the Laplace operator $L=D-W$
defined on directed graphs.}. For our purposes, however, this
definition of the normalized Laplace operator is not suitable
since by the above considerations we are particularly interested
in graphs that are neither strongly connected nor have nonnegative
weights.  In this article, we define a novel normalized Laplace
operator that can in particular be defined for directed graphs
that are neither strongly connected nor have nonnegative weights.
In contrast to Chung's normalized Laplace operator our normalized
Laplace operator is in general neither self-adjoint nor
nonnegative. Moreover, our definition of the normalized Laplace
operator is motivated by the observation that it has already found
applications in the field of complex networks, see \cite{BAJ,
Bauer10}.

The paper is organized as follows. In Section $2$ we define the
normalized Laplace operator for directed graphs and in Section $3$
and Section $4$ we derive its basic spectral properties. In
Section $5$ we characterize directed acyclic graphs by means of
their spectrum. Extremal eigenvalues of the Laplace operator are
studied in Section $6$ and Section $7$. In Section $8$ we prove
several eigenvalues estimates for the normalized Laplace operator.
Finally in Section $9$ we introduce the concept of neighborhood
graphs and use it to derive further eigenvalue estimates.
\section{Preliminaries}\label{D71}
Unless stated otherwise, we consider finite simple loopless
graphs. Let $\Gamma=(V,E,w)$ be a weighted directed graph on $n$
vertices where $V$ denotes the vertex set, $E$ denotes the edge
set, and $w:V\times V \rightarrow \R$ is the associated weight
function of the graph. For a directed edge $e=(i,j)\in E$, we say
that there is an edge from $i$ to $j$. The weight of $e=(i,j)$ is
given by $w_{ji}$ \footnote{We use this convention instead of
denoting the weight of the edge $e=(i,j)$ by $w_{ij}$, since it is
more appropriate if one studies dynamical systems defined on
graphs, see for example  \cite{BAJ}.} and we use the convention
that $w_{ji}=0$ if and only if $e=(i,j)\notin E$. The graph
$\Gamma = (V,E,w)$ is an undirected weighted graph if the
associated weight function $w$ is symmetric, i.e. satisfies
$w_{ij} = w_{ji}$ for all $i$ and $j$. Furthermore, $\Gamma$ is a
graph with non-negative weights if the associated weight function
$w$ satisfies $w_{ij}\geq 0$ for all $i$ and $j$. For ease of
notation, let $\mathbb{G}$ denote the class of
 weighted directed  graphs $\Gamma$. Furthermore, let $\gu$,
$\gp$ and $\gpu$ denote  the class of weighted undirected graphs,
the class of weighted directed graphs with non-negative weights
and the class of weighted undirected graphs with non-negative
weights, respectively. The in-degree and the out-degree of vertex
$i$ are given by $d_i^{\mathrm{in}} := \sum_jw_{ij}$ and
$d_i^{\mathrm{out}}:= \sum_jw_{ji}$, respectively. A graph is said
to be \textit{balanced} if $d_i^{\mathrm{in}} =
d_i^{\mathrm{out}}$ for all $i\in V$. Since every undirected graph
is balanced, the two notions coincide for undirected graphs. Thus,
we simply refer to the degree $d_i$ of an undirected graph.  A
graph $\Gamma$ is said to have a \textit{spanning tree} if there
exists a vertex from which all other vertices can be reached
following directed edges. A directed graph $\Gamma$ is
\textit{weakly connected } if replacing all of its directed edges
with undirected edges produces a connected (undirected) graph. A
directed graph $\Gamma$ is \textit{strongly connected} if for any
pair of distinct vertices $i$ and $j$ there exists a path from $i$
to $j$ and a path from $j$ to $i$. An undirected graph is weakly
connected if and only if it is strongly connected. Hence, we do
not distinguish between weakly and strongly connected undirected
graphs. We simply say that the undirected graph is connected if it
is weakly (strongly) connected.

\begin{defi}\label{DLaplace}Let $C(V)$
denote the space of complex valued functions on $V$. The
normalized graph Laplace operator for directed graphs
$\Gamma\in\g$ is defined as
\[\D:C(V)\rightarrow C(V),\]
\begin{eqnarray}
\Delta v(i) = \left\{\begin{array}{c l}v(i) -
\frac{1}{d^{\mathrm{in}}_i}\sum_j w_{ij} v(j) &\mbox{if } d^{\mathrm{in}}_i \neq 0 .\\
0& \mbox{else}.
\end{array} \right.
\end{eqnarray}
\end{defi}
If $d^{\mathrm{in}}_i \neq 0$ for all $i\in V$, then $\D$ is given
by
\[\D = I - D^{-1}W,\] where $D: C(V) \to C(V)$ is the multiplication
operator defined by \be \label{D83} Dv(i) = d_i^\mathrm{in}v(i)\qe
and $W: C(V) \to C(V)$ is the weighted adjacency operator
\[Wv(i) = \sum_{j\in V}w_{ij}v(j).\]
 When restricted to
undirected graphs with nonnegative weights, Definition
\ref{DLaplace} reduces to the well-known definition of the
normalized Laplace operator for undirected graphs with nonnegative
weights, c.f.\cite{Jost01}.

The choice of normalizing by the in-degree is to some extend
arbitrary. One could also consider the operator
\[\overline{\D}:C(V)\rightarrow C(V),\]
\begin{eqnarray}
\overline{\D} v(i) = \left\{\begin{array}{c l}v(i) -
\frac{1}{d^{\mathrm{out}}_i}\sum_j w_{ji} v(j) &\mbox{if } d^{\mathrm{out}}_i \neq 0 .\\
0& \mbox{else}.
\end{array} \right.
\end{eqnarray}
Note however, that both operators $\D$ and $\overline{\D}$ are
equivalent to each other in the sense that $\D(\Gamma)=
\overline{\D}(\overline{\Gamma})$, where $\overline{\Gamma}$ is
the graph that is obtained from $\Gamma$ by reversing all edges.

Since we consider a normalized graph Laplace operator, i.~e.~we
normalize the edge weights w.r.t. the in-degree, vertices with
zero in-degree are of particular interest and need a special
treatment. We define the following:
\begin{defi}\label{D57} We say that
vertex $i$ is in-isolated or simply isolated if $w_{ij} = 0$ for
all $j\in V$. Similarly, vertex $i$ is said to be
in-quasi-isolated or simply quasi-isolated if $d^{\mathrm{in}}_i
=\sum_jw_{ij} = 0$.
\end{defi}Note that every isolated vertex is quasi-isolated but not
vice versa. These definitions can be extended to induced
subgraphs:
\begin{defi}\label{D72} Let $\Gamma=(V,E,w)\in\g$ be a graph and $\Gamma^\prime =
(V^\prime, E^\prime, w^\prime)$ be an induced subgraph of
$\Gamma$, i.e.  $V^\prime \subseteq V$,
$E^\prime=E\cap(V^\prime\times V^\prime) \subseteq E$, and
$w^\prime:V^\prime\times V^\prime\rightarrow \mathds{R}$,
$w^\prime := w|_{E^\prime}$. We say that $\Gamma^\prime$ is
isolated if $w_{ij} = 0$ for all $i\in V^\prime$ and $j \notin
V^\prime$. Similarly, $\Gamma^\prime$ is said to be quasi-isolated
if $\sum_{j\in V\setminus V^\prime} w_{ij} = 0$ for all  $i\in
V^\prime$.
\end{defi}
We do not exclude the case where $V^\prime = V$. Thus, in
particular, every graph $\Gamma$ is isolated.

It is useful to introduce the reduced Laplace operator $\D_R$.
\begin{defi}
Let $V_R\subseteq V$ be the subset of all vertices that are not
quasi-isolated. The reduced Laplace operator $\D_R: C(V_R)\to
C(V_R)$ is defined as \be \label{D67} \D_R v(i) = v(i) -
\frac{1}{d_i^\mathrm{in}} \sum_{j\in V_R} w_{ij} v(j) \quad i\in
V_R,\qe where $d_i^\mathrm{in}$ is the in-degree of vertex $i$ in
$\Gamma$.
\end{defi}
As above $\D_R$ can be written in the form $\D_R = I_R -
D_R^{-1}W_R$ where $I_R$ is the identity operator on $V_R$.

It is easy to see that the spectrum of $\D$ consists of the
eigenvalues of $\D_R$ and $|V\setminus V_R|$ times the eigenvalue
$0$, i.~e.~\be \label{D5} \mathrm{spec}(\D) = (|V\setminus V_R|
\mbox{ times the eigenvalue } 0)\cup \mathrm{spec}(\D_R).\qe

We remark here that $\D_R$ can be considered as a Dirichlet
Laplace operator. The Dirichlet Laplace operator for directed
graphs is defined as in the case of undirected graphs, see
e.~g.~\cite{Grigoryan09}. Let $\Omega\subseteq V$ and denote by
$C(\Omega)$ the space of complex valued functions $v:\Omega
\rightarrow \mathds{C}$. The Dirichlet Laplace operator
$\D_\Omega$ on $C(\Omega)$ is defined as follows: First extend $v$
to the whole of $V$ by setting $v=0$ outside $\Omega$ and then
\[\D_\Omega v = (\D v)|_\Omega,
\] i.~e.~for any $i\in\Omega$ we have \[\D_\Omega v(i) = v(i) -
\frac{1}{d^{\mathrm{in}}_i} \sum_{j\in V} w_{ij}v(j)= v(i) -
\frac{1}{d^{\mathrm{in}}_i} \sum_{j\in\Omega} w_{ij}v(j)\] since
$v(j) = 0$ for all $j\in V\setminus \Omega$. Hence,
$\D_R=\D_\Omega$ if we set $\Omega = V_R$.

As already mentioned in the introduction, we are particularly
interested in graphs that are not strongly connected. However,
every graph that is not strongly connected can uniquely be
decomposed into its strongly connected components
\cite{Brualdi91}. Using this decomposition, the Laplace operator
$\D$ can be represented in the Frobenius normal form
\cite{Brualdi91}, i.~e.~either $\Gamma$ is strongly connected or
there exists an integer $z>1$ s.t.
\begin{equation} \label{D1}\D  = \left(
\begin{array}{cccccc} \D_{1} & \D_{12} &...  &\D_{1z} &
\\0& \D_{2}&...&\D_{2z} \\\vdots & \vdots& \ddots & \vdots \\  0& 0& ...&
\D_{z}
\end{array} \right),
\end{equation}
where $\D_{1},\ldots \D_{z}$ are square matrices corresponding to
the strongly connected components $\Gamma_{1},\ldots,\Gamma_{z}$
of $\Gamma$. In the following, the vertex set of $\Gamma_k$ is
denoted by $V_k$. Then the off-diagonal elements of $\D_k$ are of
the form $\frac{w_{ij}}{d^{\mathrm{in}}_i}$ for all $i,j\in V_k$
if $d^{\mathrm{in}}_i\neq 0$ and zero otherwise and the diagonal
elements are either zero (if the in-degree of the corresponding
vertex is equal to zero) or one (if the in-degree of the
corresponding vertex is nonzero). If $V_k$ does not contain a
quasi-isolated vertex, then $\D_k$ is irreducible. Furthermore,
the submatrices $\D_{kl}$, $1\leq k <l\leq z$ are determined by
the connectivity structure between different strongly connected
components. For example, $\D_{kl}$ contains all elements of the
form $\frac{w_{ij}}{d^{\mathrm{in}}_i}$ for all $i\in V_k$ and all
$j\in V_l$. A simple consequence of \rf{D1} is that \be
\label{D15} \mathrm{spec}(\D) =
\bigcup_{i=1}^z\mathrm{spec}(\D_{i}).\qe Note that $\D_i$,
$i=1,\ldots,z$, is a matrix representation of the Dirichlet
Laplace operator of the strongly connected component $\Gamma_i$ ,
i.e. $\D_i = \D_\Omega$ for $\Omega = V_i$. To sum up our
discussion, the spectrum of the Laplace operator of a directed
graph is the union of the spectra of the Dirichlet Laplace
operators of its strongly connected components $\Gamma_i$.

We conclude this section by  introducing the operator $P := I
-\D$. We have \[P:C(V)\rightarrow C(V),\]
\begin{eqnarray}
P v(i) = \left\{\begin{array}{c l}
\frac{1}{d^{\mathrm{in}}_i}\sum_j w_{ij} v(j) &\mbox{if } d^{\mathrm{in}}_i \neq 0 .\\
v(i) & \mbox{else}.
\end{array} \right.
\end{eqnarray} For technical reasons, it is sometimes convenient to study
$P$ instead of $\D$. Clearly, the eigenvalues of $\D$ and $P$ are
related to each other by \be \label{D14} \lambda(\D) = 1 -
\lambda(P), \qe i.~e.~if $\lambda$ is an eigenvalue of $P$ then
$1-\lambda$ is an eigenvalue of $\D$. When restricted to graphs
$\Gamma\in\gp$, $P(\Gamma)$ is equal to the transition probability
operator of the reversal graph $\overline{\Gamma}$. Furthermore,
we define the reduced operator $P_R = I_R - \D_R = D^{-1}_RW_R$.

\section{Basic properties of the spectrum}In this section, we
collect basic spectral properties of the Laplace operator $\D$.
\begin{satz}\label{DBasic}Let $\Gamma\in\g$ then following
assertions hold:
\begin{enumerate}
\item[(i)] The Laplace operator $\D$ has always an eigenvalue
$\lambda_0 = 0$ and the corresponding eigenfunction is given by
the constant function.

\item[(ii)] The eigenvalues of $\D$ appear in complex conjugate
pairs. \item[(iii)]The eigenvalues of $\D$ satisfy
\[\sum_{i=0}^{n-1}\lambda_i = \sum_{i=0}^{n-1}\Re(\lambda_i) =
|V_R|.\] \item[(iv)] The spectrum of $\D$ is invariant under
multiplying all weights of the form $w_{ij}$ for some fixed $i$
and $j = 1,..,n$ by a non-zero constant $c$. \item [(v)] The
spectrum of $\D$ is invariant under multiplying all weights by a
non-zero constant $c$. \item [(vi)] The Laplace operator spectrum
of a graph is the union of the Laplace operator spectra of its
weakly connected components.
\end{enumerate}
\end{satz}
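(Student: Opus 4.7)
The six items are all essentially linear-algebraic observations about the matrix representation of $\Delta$; my plan is to address each by a short direct computation, with the only subtlety being the bookkeeping for quasi-isolated vertices, which must always be handled via the second branch of Definition \ref{DLaplace}.

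For (i), I would substitute the constant function $v\equiv 1$ into Definition \ref{DLaplace}: on the non-quasi-isolated part this gives $1 - (d_i^{\mathrm{in}})^{-1}\sum_j w_{ij} = 0$ by the very definition of $d_i^{\mathrm{in}}$, and on the quasi-isolated part it is $0$ by the second branch. Part (ii) follows immediately from the fact that the matrix of $\Delta$ has real entries, so its characteristic polynomial is real and its non-real roots occur in complex conjugate pairs. For (iii) the key step is the trace of $\Delta$: since the graph is loopless one has $w_{ii}=0$, so the $i$-th diagonal entry of $\Delta$ equals $1$ for $i\in V_R$ and $0$ for quasi-isolated $i$, whence $\mathrm{tr}(\Delta)=|V_R|$. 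As the trace is real, it equals both $\sum_i\lambda_i$ and $\sum_i\Re(\lambda_i)$, giving the claimed identity.

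Parts (iv) and (v) amount to checking that $D^{-1}W$ is unchanged under the indicated scalings: multiplying row $i$ of the weight matrix by $c\ne 0$ simultaneously scales $d_i^{\mathrm{in}}$ by $c$, so the factor cancels in the $i$-th row of $D^{-1}W$; a global scaling by $c$ scales both $D$ and $W$ by $c$ and again cancels. In each case one should verify that a quasi-isolated vertex stays quasi-isolated under the scaling, so that the second branch of Definition \ref{DLaplace} is also preserved. For (vi), if $\Gamma$ is the disjoint union of weakly connected components, then ordering the vertices component by component makes $W$ and $D$ block-diagonal, hence $\Delta = I - D^{-1}W$ is block-diagonal, and its spectrum is the union of the spectra of the blocks, which are by construction the Laplace operators of the components. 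The whole argument contains no real obstacle; the only slightly delicate point is recognising in (iii) that looplessness is precisely what kills the diagonal of $D^{-1}W$, so that the trace reads off $|V_R|$ rather than something more complicated.
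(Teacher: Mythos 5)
Your proposal is correct and follows essentially the same route as the paper: plugging in the constant function for (i), the real characteristic polynomial for (ii), the trace of $\D$ for (iii), and direct verification from the definition for (iv)--(vi), which the paper leaves implicit but you spell out (including the cancellation of $c$ in $D^{-1}W$ and the block-diagonal structure for weakly connected components). No gaps; your extra remark that looplessness makes the diagonal of $D^{-1}W$ vanish is exactly the point behind the paper's one-line trace argument.
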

\begin{proof}
\begin{itemize}
\item[$(i)$] This follows immediately from the definition of $\D$
since \[ \Delta v(i) = \left\{\begin{array}{c l}
\frac{1}{d^{\mathrm{in}}_i}\sum_j w_{ij} (v(i)-v(j)) &\mbox{if } d^{\mathrm{in}}_i \neq 0 .\\
0& \mbox{else}.
\end{array} \right.\]
\item[$(ii)$] Since $\D$ can be represented as a real matrix, the
characteristic polynomial is given by
\[\det(\D-\lambda I) = a_0 + a_1\lambda + ... +
a_{n-1}\lambda^{n-1},\] with $a_i\in\mathds{R}$ for all $i =
0,1,\ldots, n-1$.  Consequently, $\det(\D-\lambda I) = 0$ if and
only if $\det(\D-\overline{\lambda}I) = 0$. \item[$(iii)$] The
equality $\sum_{i=0}^{n-1} \lambda_i=
\sum_{i=0}^{n-1}\Re(\lambda_i)$ follows from $(ii)$. By
considering the trace of $\D$, one obtains
$\sum_{i=0}^{n-1}\lambda_i = |V_R|$. \item[$(iv)$,]$(v)$ and
$(vi)$ follow directly from the definition of $\D$.
\end{itemize}
\end{proof}From Proposition \ref{DBasic}$(v)$ it follows that it is equivalent
to study the spectrum of graphs with nonnegative or nonpositive
weights. Moreover, because of Proposition \ref{DBasic}$(vi)$, we
will restrict ourselves to weakly connected graphs in the
following.
\begin{satz}\label{D16}
The spectrum of $\D$ satisfies
\[\mathrm{spec}(\D) \subseteq \mathcal{D}(1,r_1)\cup \{0\} \subseteq \mathcal{D}(1,r_2)\cup \{0\}
\subseteq \mathcal{D}(1,r)\cup \{0\},\]where $\mathcal{D}(c,r)$
denotes the disk in the complex plane centered at $c$ with radius
$r$ and
 \[r_1 := \max_{p=1,\ldots, z}\max_{i\in V_{R,p}}\frac{\sum_{j\in V_{R,p}}|w_{ij}|}{|d^{\mathrm{in}}_i|}, \]
\[r_2 := \max_{i\in V_R}\frac{\sum_{j\in V_R}|w_{ij}|}{|d^{\mathrm{in}}_i|},\]and
\be \label{D66} r :=\max_{i\in V}r(i),\qe where $r(i)
=\frac{\sum_{j\in V}|w_{ij}|}{|d^{\mathrm{in}}_i|}$. Here,
$V_{R,1},\ldots, V_{R,z}$ are the strongly connected components of
the induced subgraph $\Gamma_R$ whose vertex set is given by
$V_R$. We use the convention that $r_1, r_2$ and $r$ are equal to
zero if $d^{\mathrm{in}}_i=0$.
\end{satz}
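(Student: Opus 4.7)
The plan is to prove the three inclusions by successively refining the Gershgorin disk estimate, first for $\Delta$ itself, then for its restriction $\Delta_R$ to non-quasi-isolated vertices, then for each diagonal block of the Frobenius normal form of $\Delta_R$.

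First I would establish the outer inclusion $\mathrm{spec}(\Delta) \subseteq \mathcal{D}(1,r) \cup \{0\}$ by applying Gershgorin's circle theorem directly to the matrix representation of $\Delta$. For any vertex $i \in V$ with $d_i^{\mathrm{in}} \neq 0$, the diagonal entry of $\Delta$ is $1$ and the off-diagonal entries of the $i$-th row are $-w_{ij}/d_i^{\mathrm{in}}$; since the graph is loopless ($w_{ii}=0$), the sum of the absolute values of the off-diagonal entries is exactly $r(i)$. For any quasi-isolated vertex $i$, the entire $i$-th row is zero, giving the degenerate Gershgorin disk $\{0\}$. Taking the union over all rows yields $\mathrm{spec}(\Delta) \subseteq \mathcal{D}(1,r) \cup \{0\}$.

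Next I would apply the same idea to the reduced operator $\Delta_R$. By \eqref{D5}, the part of $\mathrm{spec}(\Delta)$ that can lie outside $\{0\}$ is carried by $\Delta_R$. Since $V_R$ only contains vertices with nonzero in-degree, every diagonal entry of $\Delta_R$ equals $1$, and the $i$-th Gershgorin row-sum is $\sum_{j\in V_R}|w_{ij}|/|d_i^{\mathrm{in}}|$. Thus $\mathrm{spec}(\Delta_R) \subseteq \mathcal{D}(1,r_2)$, which combined with \eqref{D5} gives $\mathrm{spec}(\Delta) \subseteq \mathcal{D}(1,r_2) \cup \{0\}$. The inclusion $\mathcal{D}(1,r_2) \subseteq \mathcal{D}(1,r)$ is immediate because enlarging the summation from $V_R$ to $V$ can only increase the row-sum, so $r_2 \leq r$.

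For the innermost inclusion, I would invoke the Frobenius normal form decomposition \eqref{D1} applied to the induced subgraph $\Gamma_R$ on $V_R$, with strongly connected components $V_{R,1},\ldots,V_{R,z}$. By \eqref{D15} the spectrum of $\Delta_R$ equals the union of the spectra of its diagonal blocks $\Delta_{R,p}$, where $\Delta_{R,p}$ is the Dirichlet Laplace operator of $\Gamma_{R,p}$. Applying Gershgorin to each $\Delta_{R,p}$ separately yields row-sums $\sum_{j\in V_{R,p}}|w_{ij}|/|d_i^{\mathrm{in}}|$ centered at $1$, so $\mathrm{spec}(\Delta_{R,p}) \subseteq \mathcal{D}(1,r_1)$, and taking the union over $p$ gives $\mathrm{spec}(\Delta_R) \subseteq \mathcal{D}(1,r_1)$. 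The inclusion $\mathcal{D}(1,r_1) \subseteq \mathcal{D}(1,r_2)$ follows again by monotonicity of row-sums, since $V_{R,p} \subseteq V_R$.

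There is no real obstacle here: the argument is essentially three successive applications of Gershgorin, each on a progressively smaller matrix. The only care needed is bookkeeping for the quasi-isolated rows in the outer step (to account for the $\{0\}$ term) and invoking the Frobenius block structure of $\Delta_R$ in the inner step so that Gershgorin can be applied blockwise rather than to all of $\Delta_R$ at once.
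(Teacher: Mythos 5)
Your proposal is correct and is exactly the paper's argument: the paper's one-line proof cites Gershgorin's circle theorem together with \rf{D5} and \rf{D15}, which is precisely the three-level application (full matrix, reduced matrix $\D_R$, Frobenius diagonal blocks) that you spell out in detail.
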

\begin{proof}Clearly, $r_1\leq r_2\leq r$ and the proof follows from Gersgorin's circle
theorem (see e.~g.~\cite{Horn90}) and \rf{D5}-\rf{D15}.
\end{proof}
For undirected graphs with nonnegative weights Proposition
\ref{D16} reduces to the well-known result \cite{Chung97}, that
all eigenvalues of $\D$ are contained in the interval $[0,2]$.

The radius $r$ in Proposition \ref{D16} has the following
properties: $r\geq 1$ if and only if $V_R\neq \emptyset$ and $r=0$
if and only if $V_R = \emptyset$.

\begin{lemma}
Let $\Gamma$ be a graph without quasi-isolated vertices and let
$r(i) = r=1$ for all $i\in V$. Then there exists a graph
$\Gamma^+\in \gp$ that is isospectral to $\Gamma$.
\end{lemma}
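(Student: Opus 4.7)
The plan is to unpack the condition $r(i)=1$ as an equality case of the triangle inequality, then construct $\Gamma^+$ by simply taking absolute values of the weights and show that its Laplace operator coincides with $\D(\Gamma)$ (which is stronger than isospectrality).

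First I would unwind the hypothesis. Since $i$ is not quasi-isolated, $|d^{\mathrm{in}}_i|>0$, and
\[
r(i)=\frac{\sum_{j\in V}|w_{ij}|}{|d^{\mathrm{in}}_i|}=1
\]
rewrites as $\sum_j |w_{ij}|=|d^{\mathrm{in}}_i|=\bigl|\sum_j w_{ij}\bigr|$. The equality case in the triangle inequality then forces all nonzero summands to share a common sign: for each $i\in V$ there is $\epsilon_i\in\{+1,-1\}$ such that $w_{ij}=\epsilon_i|w_{ij}|$ for every $j$ (including trivially when $w_{ij}=0$). In particular $d^{\mathrm{in}}_i=\epsilon_i|d^{\mathrm{in}}_i|$.

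Next I would define the candidate graph $\Gamma^+=(V,E,w^+)$ by $w^+_{ij}:=|w_{ij}|$. By construction $w^+_{ij}\ge 0$, so $\Gamma^+\in\gp$; the underlying vertex and edge sets are unchanged. The in-degrees of $\Gamma^+$ are $d^{\mathrm{in}}_i(\Gamma^+)=\sum_j|w_{ij}|=|d^{\mathrm{in}}_i(\Gamma)|>0$, so $\Gamma^+$ also has no quasi-isolated vertices and $\D(\Gamma^+)$ uses the first branch of Definition \ref{DLaplace} at every vertex.

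Finally I would verify $\D(\Gamma)=\D(\Gamma^+)$ by direct calculation at an arbitrary $i\in V$. Using the sign relations above,
\[
\frac{1}{d^{\mathrm{in}}_i(\Gamma)}\sum_j w_{ij}v(j)=\frac{1}{\epsilon_i|d^{\mathrm{in}}_i(\Gamma)|}\sum_j \epsilon_i|w_{ij}|v(j)=\frac{1}{d^{\mathrm{in}}_i(\Gamma^+)}\sum_j w^+_{ij}v(j),
\]
so $\D(\Gamma)v(i)=\D(\Gamma^+)v(i)$ for every $v\in C(V)$ and every $i$. Hence the two operators are identical as matrices and, a fortiori, isospectral.

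The only nontrivial step is the first one — recognizing that $r(i)=1$ is precisely the equality case of $|\sum_j w_{ij}|\le\sum_j|w_{ij}|$ and therefore imposes a uniform sign on the incoming weights at each vertex; once that is in hand the construction and the verification are routine.
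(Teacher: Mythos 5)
Your proof is correct and follows essentially the same route as the paper: the hypothesis $r(i)=1$ forces a uniform sign on the weights $w_{ij}$ at each vertex (the equality case of the triangle inequality, which the paper states as the sign observation), and $\Gamma^+$ is obtained by taking absolute values. The only difference is cosmetic: where the paper invokes Proposition \ref{DBasic}~$(iv)$ (invariance of the spectrum under row scaling), you verify directly that $\D(\Gamma)=\D(\Gamma^+)$, which is exactly the fact underlying that proposition.
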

\begin{proof}
Since $r=1$ it follows from the definition of $r$ that for every
vertex $i\in V$ the sign $\mathrm{sgn}(w_{ij})$ is the same for
all $j\in V$. By Proposition \ref{DBasic} $(iv)$ the graph
$\Gamma^+\in \gp$ that is obtained from $\Gamma$ by replacing the
associated weight function $w$ by its absolute value $|w|$ is
isospectral to $\Gamma$.
\end{proof} In the following, $\Gamma^+$ is called the associated
positive graph of $\Gamma$.
\begin{coro} \label{D35} For graphs $\Gamma\in\g$ the
nonzero eigenvalues satisfy \be \label{D32}
 1-r\leq \min_{i:\lambda_i\neq 0}\Re(\lambda_i)\leq
\frac{|V_R|}{n-m_0}\leq \max_{i:\lambda_i\neq 0}
\Re(\lambda_i)\leq 1+r,\qe where $m_0$ denotes the multiplicity of
the eigenvalue zero. In particular, we have \[1\leq
\max_{i:\lambda_i\neq 0}\Re(\lambda_i).\]
\end{coro}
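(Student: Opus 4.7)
The plan is to chain two ingredients already in place: the localisation of the spectrum from Proposition \ref{D16} for the outer inequalities, and the trace identity from Proposition \ref{DBasic}(iii) for the middle ones. The ``in particular'' clause then falls out of \rf{D5}.

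First I would handle the outer bounds. Proposition \ref{D16} gives $\mathrm{spec}(\D)\subseteq \mathcal{D}(1,r)\cup\{0\}$, so every nonzero eigenvalue $\lambda_i$ satisfies $|\lambda_i-1|\le r$. Since $|\Re(\lambda_i)-1|\le |\lambda_i-1|$, this yields $1-r \le \Re(\lambda_i)\le 1+r$, and taking the minimum, resp.\ maximum, over $\{i:\lambda_i\ne 0\}$ gives the outermost two inequalities.

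Next I would extract the central inequalities from the trace identity. Proposition \ref{DBasic}(iii) reads $\sum_{i=0}^{n-1}\Re(\lambda_i)=|V_R|$. The $m_0$ vanishing eigenvalues contribute nothing, so the sum is really over the $n-m_0$ nonzero eigenvalues, and dividing by $n-m_0$ identifies $\frac{|V_R|}{n-m_0}$ as the arithmetic mean of $\{\Re(\lambda_i):\lambda_i\ne 0\}$. The elementary sandwich ``min $\le$ mean $\le$ max'' then closes those two inequalities. One should note in passing that $n-m_0\ge 1$ whenever the statement is nontrivial, i.e.\ whenever $V_R\ne\emptyset$; if $V_R=\emptyset$, then by \rf{D5} every eigenvalue vanishes and the inequalities are vacuous.

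Finally, for the ``in particular'' assertion, \rf{D5} shows that the zero eigenvalue has multiplicity at least $|V\setminus V_R|=n-|V_R|$. Hence $m_0\ge n-|V_R|$, equivalently $|V_R|\ge n-m_0$, so $\frac{|V_R|}{n-m_0}\ge 1$; combined with the middle inequality just proved, this forces $\max_{i:\lambda_i\ne 0}\Re(\lambda_i)\ge 1$. There is no real obstacle in the argument; the only mildly subtle points are remembering why $n-m_0>0$ in the nontrivial case and why the zero eigenvalue has multiplicity at least $|V\setminus V_R|$, both being immediate from \rf{D5}.
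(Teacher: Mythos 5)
Your argument is correct and is essentially the paper's own proof, just written out in detail: the outer bounds come from the spectral localisation in Proposition \ref{D16}, the middle inequalities from the trace identity of Proposition \ref{DBasic}~$(iii)$ via the min--mean--max sandwich, and the final claim from the observation $n-m_0\leq |V_R|$ (which the paper states directly and you derive from \rf{D5}). Your extra remarks on the degenerate case $V_R=\emptyset$ and on why $n-m_0\geq 1$ otherwise are sound refinements of the same approach, not a different one.
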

\begin{proof}
This estimate follows from Proposition \ref{DBasic} $(iii)$ and
Proposition \ref{D16}. The last statement follows from the
observation that $n-m_0\leq |V_R|$.
\end{proof}
Later, in Corollary \ref{D23}, we characterize all graphs for
which $\max_{i:\lambda_i\neq 0} \Re(\lambda_i)= 1+r$. Similarly,
in Corollary \ref{D17}, we characterize all graphs for which
$\min_{i:\lambda_i\neq 0}\Re(\lambda_i)= 1-r$, provided that
$r>1$.

For graphs with nonnegative weights, Proposition \ref{D16} can be
further improved.
\begin{satz}\label{D84} Let $\Gamma\in\gp$, then all eigenvalues of the
Laplace operator $\D$ are contained in the shaded region in Figure
\ref{DFig.1}.
\end{satz}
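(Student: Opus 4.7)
The plan is to exploit that for $\Gamma\in\gp$ the reduced transition operator $P_R = D_R^{-1}W_R$ is nonnegative and row-substochastic. Indeed, for every $i\in V_R$ we have $d_i^{\mathrm{in}}>0$, and since all weights are nonnegative,
\[\sum_{j\in V_R}(P_R)_{ij} = \frac{1}{d_i^{\mathrm{in}}}\sum_{j\in V_R} w_{ij} \;\leq\; \frac{1}{d_i^{\mathrm{in}}}\sum_{j\in V}w_{ij} = 1.\]
Applying Perron--Frobenius to the nonnegative matrix $P_R$ (or, equivalently, Gersgorin row-wise to $P_R$, noting that $(P_R)_{ii}=0$ because $\Gamma$ is loopless, so each Gersgorin disk $\mathcal{D}(0,\sum_{j}(P_R)_{ij})$ is contained in $\overline{\mathcal{D}(0,1)}$), we conclude $\mathrm{spec}(P_R)\subseteq \overline{\mathcal{D}(0,1)}$. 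The identity $\lambda(\D_R)=1-\lambda(P_R)$ combined with \rf{D5} then yields $\mathrm{spec}(\D)\subseteq \overline{\mathcal{D}(1,1)}$, since $0$ itself lies on the boundary of this disk.

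If the shaded region depicted in Figure \ref{DFig.1} is strictly smaller than $\overline{\mathcal{D}(1,1)}$, I would sharpen the bound by passing to the Frobenius normal form \rf{D1}--\rf{D15} and treating each strongly connected component $\Gamma_{R,p}$ of $\Gamma_R$ separately. Whenever $\Gamma_{R,p}$ has outgoing edges to other components, the corresponding diagonal block of $P_R$ is strictly sub-stochastic, so its Gersgorin disks shrink away from the unit circle and confine the spectrum of that block strictly inside $\overline{\mathcal{D}(0,1)}$. Taking the union over all blocks via \rf{D15} would then give a tighter region than the crude disk.

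The main obstacle I anticipate is matching the exact boundary of the shaded region in Figure \ref{DFig.1}. The Perron--Frobenius/Gersgorin argument naturally produces the closed disk $\overline{\mathcal{D}(1,1)}$; any finer geometric constraint (such as a cardioid-type boundary, a Karpelevich-style curve, or excluded wedges near $\lambda=2$) would have to be extracted from the row-sum structure of the substochastic blocks together with properties specific to nonnegative row-stochastic matrices, and would then need to be shown tight by exhibiting extremal graphs (e.g., directed cycles whose transition operators have eigenvalues at roots of unity on the unit circle).
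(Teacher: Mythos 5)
There is a genuine gap: what you actually prove is only that $\mathrm{spec}(\D)\subseteq\overline{\mathcal{D}(1,1)}$, which is just Proposition \ref{D16} in the case $r=1$ (every $\Gamma\in\gp$ without isolated vertices has $r=1$), whereas Proposition \ref{D84} is explicitly stated as an \emph{improvement} of that disk bound. The shaded region of Figure \ref{DFig.1} is strictly smaller than the disk; its boundary passes through the points $1-e^{\pm 2\pi i\frac{1}{n}}$ and depends on the number of vertices $n$. The ingredient the paper relies on is the theorem of Dmitriev and Dynkin \cite{Dmitriev45} (see also \cite{Minc}) describing the region $\Theta_n\subseteq\overline{\mathcal{D}(0,1)}$ that contains all characteristic roots of $n\times n$ stochastic matrices: for $\Gamma\in\gp$ the operator $P=I-\D$ is row-stochastic (rows of vertices with $d_i^{\mathrm{in}}\neq 0$ sum to one, and rows of quasi-isolated vertices are identity rows), so $\mathrm{spec}(P)\subseteq\Theta_n$ and hence $\mathrm{spec}(\D)\subseteq 1-\Theta_n$, which is exactly the shaded region. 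You name a ``Karpelevich-style curve'' as a possible obstacle but neither invoke nor prove such a localization theorem, so the statement as claimed is not established.

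Your proposed sharpening via the Frobenius normal form and Gersgorin on substochastic blocks cannot close this gap. Gersgorin only produces unions of disks determined by row sums, and for the extremal examples the row sums give nothing beyond the unit disk: the directed $n$-cycle is strongly connected with a genuinely stochastic block whose eigenvalues are all the $n$-th roots of unity, lying on the unit circle, so no block is ``strictly substochastic'' and no shrinking occurs. The true region $\Theta_n$ is a curvilinear polygon whose vertices are roots of unity of order at most $n$ and whose arcs are determined by the combinatorics of cycle lengths, information invisible to row-sum arguments; this is why the proof must go through the Dmitriev--Dynkin (or Karpelevich) theorem rather than Perron--Frobenius or Gersgorin alone.
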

\begin{figure}\begin{center}
\includegraphics[width =
5.3cm]{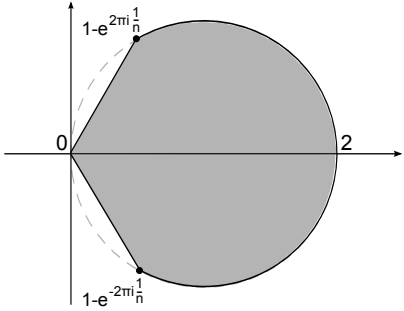}\caption{\label{DFig.1} For a graph
$\Gamma\in\gp$ with $n$ vertices, all eigenvalues of $\D$ are
contained in the shaded region. }
\end{center}
\end{figure}
\begin{proof}This follows from the results
in \cite{Dmitriev45}, see \cite{Minc} for further discussion.
\end{proof}We close this section by considering the following example.
\begin{example}\label{D82}In \cite{Chung97} it is shown
that the smallest non-trivial eigenvalue $\lambda_1$ of
non-complete undirected graphs $\Gamma\in\gpu$ with nonnegative
weights satisfies $\lambda_1\leq 1$. It is tempting to conjecture
that $\min_{i\neq 0} \Re(\lambda_i)\leq 1$ for all non-complete
undirected graphs with positive and negative weights and for all
non-complete directed graphs with nonnegative weights. However,
the two examples in Figure \ref{DFig.3} show that this is, in
general, not true. For both, the non-complete graph $\Gamma_1\in
\gu$ in Figure \ref{DFig.3} (a) and the non-complete graph
$\Gamma_2\in\gp$ in Figure \ref{DFig.3} (b) we have $\min_{i\neq
0} \Re(\lambda_i)> 1$. Thus, there exist non-complete graphs
$\Gamma_1\in\gu$ and $\Gamma_2\in\gp$ for which the smallest
non-zero real part of the eigenvalues is larger than the smallest
non-zero eigenvalue of all non-complete graphs $\Gamma\in\gpu$.
This observation has interesting consequences for the
synchronization of coupled oscillators, see \cite{BauerAtay}.
\end{example}
\begin{figure}\begin{center}
\includegraphics[width =
6.3cm]{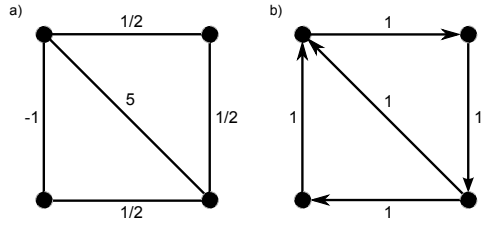}\caption{\label{DFig.3} a) The eigenvalues of
$\D$ are $1.45 \pm 0.46 i, 1.10, 0$. b) The eigenvalues of $\D$
are $1.65, 1.18\pm 0.86 i, 0$.}
\end{center}
\end{figure}
\section{Spectrum of $\D$ and isolated components of $\Gamma$}
We have the following simple observation:
\begin{lemma}\label{D8} Consider a graph $\Gamma\in\g$ and
let $\Gamma_i, 1\leq i\leq r$ be its strongly connected
components. Furthermore, let the Laplace operator $\D$ be
represented in Frobenius normal form \rf{D1}. Then,
\begin{itemize}
\item[$(i)$]If $\Gamma_i$ is isolated then $\D_{ij}=0$ for all
$j>i$. \item[$(ii)$]If $\Gamma_i$ is quasi-isolated then the row
sums of $\D_{i,(i+1)}\ldots\D_{ir}$ add up to zero.
\end{itemize}
Moreover, if $\Gamma\in \gp$ then
\begin{itemize}
\item[$(iii)$]$\Gamma_i$ is isolated if and only if $\D_{ij}=0$
for all $j>i$. \item[$(iv)$]$\Gamma_i$ is quasi-isolated if and
only if the row sums of $\D_{i,(i+1)}\ldots\D_{ir}$ add up to
zero.
\end{itemize}
\end{lemma}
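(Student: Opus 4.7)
The plan is to unpack Definition \ref{DLaplace} in the block form \rf{D1} using two elementary identities that do essentially all the work. First, for $k \in V_i$ with $d_k^{\mathrm{in}} \neq 0$, the off-diagonal entry of $\D$ in row $k$ and column $l$ equals $-w_{kl}/d_k^{\mathrm{in}}$, and the row sums to zero since $1 - (1/d_k^{\mathrm{in}})\sum_l w_{kl} = 0$. Second, if $d_k^{\mathrm{in}} = 0$ then the entire row of $\D$ is zero by definition. Because \rf{D1} is upper block triangular, for $k \in V_i$ the only nonzero entries of row $k$ live in the blocks $\D_i, \D_{i,i+1}, \ldots, \D_{iz}$, and in particular the row sum across $\D_{i,i+1}, \ldots, \D_{iz}$ exactly cancels the row sum in $\D_i$ minus one.

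Parts $(i)$ and $(ii)$ then follow immediately. For $(i)$, isolation gives $w_{kl} = 0$ whenever $k \in V_i$ and $l \notin V_i$, so every entry of $\D_{ij}$ for $j > i$, being either $-w_{kl}/d_k^{\mathrm{in}}$ or zero, vanishes. For $(ii)$, the sum of row $k$ taken across $\D_{i,i+1}, \ldots, \D_{iz}$ equals $-(1/d_k^{\mathrm{in}}) \sum_{l \notin V_i} w_{kl}$ when $d_k^{\mathrm{in}} \neq 0$ and is zero otherwise; quasi-isolation annihilates this expression in both cases.

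For the converses $(iii)$ and $(iv)$ I exploit nonnegativity $w \geq 0$. The only subtle case is a row with $d_k^{\mathrm{in}} = 0$: with nonnegative weights $\sum_l w_{kl} = 0$ already forces every $w_{kl} = 0$, so both isolation ($w_{kl}=0$ for $l\notin V_i$) and quasi-isolation ($\sum_{l\notin V_i} w_{kl}=0$) hold trivially for that row. Otherwise $\D_{ij} = 0$ gives $w_{kl} = 0$ for every $l \in V_j$, which aggregated over $j > i$ yields $(iii)$; and the vanishing of the prescribed row sums gives $\sum_{l \notin V_i} w_{kl} = 0$, yielding $(iv)$.

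I do not anticipate any real obstacle; the only points to watch are the separate treatment of in-degree-zero rows, and, for the converses, the essential use of $w \geq 0$ to pass from a sum being zero to each summand being zero (this is precisely where signed weights would break the equivalence).
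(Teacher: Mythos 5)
Your proof is correct; the paper states Lemma \ref{D8} without proof, treating it as an immediate consequence of Definition \ref{DLaplace} and the Frobenius normal form \rf{D1}, and your entrywise and row-sum verification---including the separate treatment of rows with $d_k^{\mathrm{in}}=0$, which is exactly where nonnegativity is needed for $(iii)$ and $(iv)$---is precisely that omitted routine argument. One cosmetic point: your aside that the row sum across $\D_{i,i+1},\ldots,\D_{iz}$ ``exactly cancels the row sum in $\D_i$ minus one'' has a sign slip (that partial row sum equals the negative of the row sum of the $\D_i$-block, since the full row of $\D$ sums to zero), but since this remark is never used in parts $(i)$--$(iv)$, nothing is affected.
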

\begin{lemma}\label{D73}
Every graph $\Gamma\in\g$ contains at least one isolated strongly
connected component. Furthermore, $\Gamma\in\g$ contains exactly
one isolated strongly connected component if and only if $\Gamma$
contains a spanning tree.
\end{lemma}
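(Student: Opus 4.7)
The strategy is to recast the statement via the condensation DAG $\widehat{\Gamma}$ of $\Gamma$ (vertices $=$ the strongly connected components $\Gamma_1,\ldots,\Gamma_z$; a directed edge $\Gamma_l\to\Gamma_k$ whenever some edge of $\Gamma$ runs from a vertex of $V_l$ to a vertex of $V_k$). The key translation step is to unfold Definition \ref{D72} in this paper's convention: $w_{ij}$ denotes the weight of the edge from $j$ to $i$, so the condition $w_{ij}=0$ for all $i\in V_k$, $j\notin V_k$ says exactly that no edge of $\Gamma$ enters $V_k$ from outside. Hence ``$\Gamma_k$ is isolated'' is equivalent to ``$\Gamma_k$ is a source of $\widehat{\Gamma}$'' (i.e.\ has zero in-degree in the condensation). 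Once this reformulation is in place, everything reduces to standard DAG reasoning.

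For the first assertion I would invoke the familiar fact that every nonempty finite DAG has at least one source: starting from any vertex of $\widehat{\Gamma}$ and walking backwards against the arrows must terminate, by finiteness and acyclicity, at a vertex with no incoming edges. This produces at least one isolated strongly connected component of $\Gamma$.

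For the equivalence in the second assertion I would prove both directions through the same translation. $(\Leftarrow)$ Suppose $\Gamma$ has a spanning tree with root $r$, and let $S$ be the SCC containing $r$. For any other SCC $T$, a directed $\Gamma$-path from $r$ to some vertex of $T$ projects to a directed path from $S$ to $T$ in $\widehat{\Gamma}$; hence every SCC other than $S$ has positive in-degree in $\widehat{\Gamma}$, so $S$ is the unique source, i.e.\ the unique isolated component. $(\Rightarrow)$ Suppose $S$ is the unique source of $\widehat{\Gamma}$. For any SCC $T$, the backwards-walk argument again terminates at a source, which by uniqueness must be $S$; this yields a directed path in $\widehat{\Gamma}$ from $S$ to $T$. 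Lifting such a path to $\Gamma$ produces a walk from some vertex of $S$ to some vertex of $T$, and splicing in internal paths inside each SCC (available by strong connectivity) gives, for any fixed $r\in S$, a directed $\Gamma$-path from $r$ to every vertex of $\Gamma$; thus $r$ is the root of a spanning tree.

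The main (and really only) obstacle I anticipate is the bookkeeping around the paper's transposed weight convention: one must check carefully that ``isolated'' corresponds to a \emph{source} rather than a sink in $\widehat{\Gamma}$, since reversing this would invert the whole argument. Once that is settled, the proof uses nothing beyond finiteness and acyclicity of $\widehat{\Gamma}$ together with strong connectivity inside each $\Gamma_k$, and no properties of the Laplace operator are needed at all.
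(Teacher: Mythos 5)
Your proof is correct and is essentially the paper's argument spelled out in detail: the paper's one-line proof ``follows immediately from the Frobenius normal form of $\D$'' is exactly the statement that the condensation of $\Gamma$ is a finite DAG admitting a topological order, and your source/reachability analysis of that DAG (including the correct reading of the transposed weight convention, so that ``isolated'' means ``source'') is the intended content. No gaps.
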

\begin{proof} This follows immediately from the Frobenius
 normal form of $\D$.
\end{proof}

In particular, every undirected graph $\Gamma\in\gu$ is strongly
connected and isolated.

In general, it is not true that the spectrum of an induced
subgraph $\Gamma^\prime$ of $\Gamma$ is contained in the spectrum
of the whole $\Gamma$, i.~e.~$\mathrm{spec}(\D(\Gamma'))
\nsubseteq \mathrm{spec}(\D(\Gamma))$. However, we have the
following result:
\begin{satz}\label{D9} Let $\Gamma \in\g$ and $\Gamma^\prime$
be an induced subgraph of $\Gamma$. If one of the following
conditions is satisfied
\begin{itemize}\item[(i)] $\Gamma^\prime$ consists of $1\leq p\leq r$ strongly
connected components of $\Gamma$ and is quasi-isolated,
\item[(ii)] $\Gamma^\prime$ is isolated,
\end{itemize}
then
\[\mathrm{spec}(\D(\Gamma')) \subseteq
\mathrm{spec}(\D(\Gamma)).\]
\end{satz}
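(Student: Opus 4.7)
The plan is to reduce both (i) and (ii) to the spectral decomposition \eqref{D15}, $\mathrm{spec}(\Delta)=\bigcup_{k=1}^{r}\mathrm{spec}(\Delta_{k})$, together with the observation already made in the preliminaries that each diagonal block $\Delta_{k}$ of the Frobenius normal form is the Dirichlet Laplace operator $\Delta_{V_{k}}$ on the strongly connected component $\Gamma_{k}$. The whole argument then amounts to showing that, under either hypothesis, the SCC structure on $V'$ and the normalizing in-degrees on $V'$ are preserved when we pass from $\Gamma$ to $\Gamma'$, so that certain diagonal blocks of the Frobenius form of $\Delta(\Gamma')$ coincide, as matrices, with diagonal blocks of the Frobenius form of $\Delta(\Gamma)$.

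First I would show that in both cases $V'$ is the disjoint union of whole SCCs $V_{i_{1}},\ldots,V_{i_{p}}$ of $\Gamma$. In case (i) this is part of the hypothesis. In case (ii), suppose some SCC $V_{k}$ of $\Gamma$ met both $V'$ and $V\setminus V'$; take $i\in V_{k}\cap V'$ and $j\in V_{k}\setminus V'$. Strong connectivity in $\Gamma$ yields a path from $j$ to $i$ that must contain at least one edge crossing from $V\setminus V'$ into $V'$, i.e.\ an edge of nonzero weight $w_{pq}$ with $p\in V'$ and $q\notin V'$. By Definition~\ref{D72} this contradicts $\Gamma'$ being isolated.

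Next I would verify that the normalizing in-degrees agree on $V'$. Case (i) assumes quasi-isolation, and case (ii) trivially implies it; hence for every $i\in V'$,
\[
d_{i}^{\mathrm{in}}(\Gamma)=\sum_{j\in V}w_{ij}=\sum_{j\in V'}w_{ij}+\sum_{j\notin V'}w_{ij}=\sum_{j\in V'}w_{ij}=d_{i}^{\mathrm{in}}(\Gamma').
\]
Since the intra-$V_{i_{k}}$ weights $w_{ij}$ with $i,j\in V_{i_{k}}$ also coincide in $\Gamma$ and $\Gamma'$, the Dirichlet blocks match as matrices: $\Delta_{V_{i_{k}}}(\Gamma')=\Delta_{V_{i_{k}}}(\Gamma)=\Delta_{i_{k}}$. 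Moreover, the SCCs of $\Gamma'$ are exactly $\Gamma_{i_{1}},\ldots,\Gamma_{i_{p}}$: each $V_{i_{k}}$ is strongly connected in $\Gamma'$ via paths that can be taken to stay inside $V_{i_{k}}$, and any strict enlargement within $V'$ would also be strongly connected in $\Gamma$, violating the maximality of $V_{i_{k}}$. Applying \eqref{D15} both to $\Gamma'$ and to $\Gamma$ then gives
\[
\mathrm{spec}(\Delta(\Gamma'))=\bigcup_{k=1}^{p}\mathrm{spec}(\Delta_{i_{k}})\subseteq\bigcup_{k=1}^{r}\mathrm{spec}(\Delta_{k})=\mathrm{spec}(\Delta(\Gamma)),
\]
which is the assertion.

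The only subtle point is the first step in case (ii): the key is that "isolated" in the sense of Definition~\ref{D72} forbids any edge entering $V'$ from the outside, which is exactly what a boundary-crossing path inside a strongly connected component of $\Gamma$ would require. Once this SCC-integrity is established, everything else is straightforward bookkeeping around \eqref{D15}.
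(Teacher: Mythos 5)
Your proposal is correct and follows essentially the same route as the paper: quasi-isolation preserves the in-degrees so the Dirichlet blocks of the Frobenius normal form coincide, and case (ii) reduces to case (i) because an isolated induced subgraph must be a union of whole strongly connected components. The only difference is that you spell out the path-crossing argument for that last observation (and the matching of SCC structures), which the paper merely asserts with ``observe that.''
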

\begin{proof}$(i)$
First, assume that $\Gamma^\prime$ is quasi-isolated and consists
of $p$ strongly connected components of $\Gamma$. Without loss of
generality  we assume that $\Gamma^\prime = \cup_{i=1}^p\Gamma_i$.
Since $\Gamma^\prime$ is quasi-isolated we have for all vertices
$i\in V^\prime$:
\[d^{\mathrm{in}}_i = \sum_{j\in V}w_{ij} = \sum_{j\in V^\prime}w_{ij}+\sum_{j\in V\setminus V^\prime}w_{ij}
=\sum_{j\in V^\prime}w_{ij}\] Thus, the in-degree of each vertex
$i\in V^\prime$ is not affected by the vertices in $V\setminus
V^\prime$. Using \rf{D1} and  \rf{D15} we obtain
\[ \mathrm{spec}(\D(\Gamma^\prime)) =
\bigcup_{i=1}^p\mathrm{spec}(\D_i)\subseteq
\bigcup_{i=1}^r\mathrm{spec}(\D_i) =\mathrm{spec}(\D(\Gamma)).\]

$(ii)$ Now assume that $\Gamma^\prime$ is isolated. Observe that
each isolated induced subgraph $\Gamma^\prime$ of $\Gamma$ has to
consist of $p$, $1\leq p\leq r$ strongly connected components of
$\Gamma$. Thus, the second assertion follows from the first one.
\end{proof}
We will make use of the following theorem by Taussky
\cite{Taussky49}.
\begin{theo}[\cite{Taussky49}]\label{D12}
A complex $n\times n$ matrix $A$  is non-singular if $A$ is
irreducible and $|A_{ii}|\geq \sum_{j\neq i}|A_{ij}|$ with
equality in at most $n-1$ cases.
\end{theo}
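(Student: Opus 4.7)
The plan is to prove this by contradiction, leveraging the maximum-modulus argument familiar from the proof of Gershgorin's theorem and then using irreducibility to propagate equality. Suppose $A$ is singular; then there exists a nonzero vector $x\in\mathbb{C}^n$ with $Ax=0$. Let $k$ be an index where $|x_k| = \max_j |x_j| > 0$. From the $k$th row equation $A_{kk}x_k = -\sum_{j\neq k}A_{kj}x_j$ I derive the chain
\[
|A_{kk}|\,|x_k| \;=\; \Big|\sum_{j\neq k}A_{kj}x_j\Big| \;\leq\; \sum_{j\neq k}|A_{kj}|\,|x_j| \;\leq\; |x_k|\sum_{j\neq k}|A_{kj}| \;\leq\; |A_{kk}|\,|x_k|,
\]
where the outer inequality uses the weak diagonal dominance hypothesis. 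Hence all inequalities are equalities.

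From equality in the last step I extract the first key consequence: row $k$ achieves equality in the diagonal dominance condition, i.e.\ $|A_{kk}|=\sum_{j\neq k}|A_{kj}|$. From equality in the middle step I extract the second consequence: for every $j\neq k$ with $A_{kj}\neq 0$ one has $|x_j|=|x_k|$. Thus any index $j$ reachable from $k$ by a single off-diagonal nonzero entry of $A$ is again a maximizer of $|x_\cdot|$, and I can repeat the argument with $k$ replaced by $j$.

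Next I invoke irreducibility. Associate to $A$ the directed graph on $\{1,\dots,n\}$ with an edge $i\to j$ whenever $A_{ij}\neq 0$ for $i\neq j$; irreducibility of $A$ is equivalent to strong connectivity of this graph. Therefore every vertex $\ell$ is reachable from $k$ by a directed path, and by induction along such a path I conclude $|x_\ell|=|x_k|$ for every $\ell$, and simultaneously that the equality $|A_{\ell\ell}|=\sum_{j\neq\ell}|A_{\ell j}|$ holds at \emph{every} row $\ell$. This contradicts the hypothesis that equality holds in at most $n-1$ cases, so $A$ must be nonsingular.

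The main delicate point is the propagation step: I must ensure that the inductive argument which starts at $k$ remains valid at each newly discovered maximizer. This works because the two consequences extracted above depend only on the index being a maximizer of $|x_\cdot|$ (not specifically on $k$), so the identical deduction applies verbatim at each $\ell$ reached along a directed path. The hypothesis is used sharply: weak diagonal dominance provides the chain of inequalities, strict inequality in at least one row provides the contradiction, and irreducibility is what forces this single strict row to be visited during the propagation.
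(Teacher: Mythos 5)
Your proof is correct. The paper itself does not prove this result---it is quoted from Taussky's 1949 paper as a known theorem---so there is no argument in the text to compare against; your write-up is the classical maximum-modulus proof of exactly this statement. All the delicate points are handled properly: the cancellation of $|x_k|>0$ gives row equality at each maximizer, the term-by-term equality $\sum_{j\neq k}|A_{kj}|\bigl(|x_k|-|x_j|\bigr)=0$ forces every out-neighbor of a maximizer to be a maximizer, and strong connectivity of the digraph of nonzero off-diagonal entries (equivalent to irreducibility) propagates this to all $n$ rows, contradicting the assumption that at least one row is strictly dominant.
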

\begin{lemma}\label{D10}
Let $\Gamma \in\gp$ be a graph with nonnegative weights and let
$\Gamma_i$, $1\leq i\leq r$ be its strongly connected components.
Furthermore, let $\D$ be represented in Frobenius normal form.
Then, zero is an eigenvalue (in fact a simple eigenvalue) of
$\D_i$ if and only if $\Gamma_i$ is isolated.
\end{lemma}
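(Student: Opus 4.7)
The plan is to prove both implications and to fold the simplicity claim into the forward direction. The two main tools are (a) the observation that if $\Gamma_i$ is isolated then the in-degree of each $v\in V_i$ computed in $\Gamma$ agrees with the in-degree computed inside $\Gamma_i$, and (b) Theorem \ref{D12} (Taussky) to handle the converse. I will split off the case $|V_i|=1$ from $|V_i|\ge 2$ at the outset, since the irreducibility argument degenerates when the diagonal block is a scalar.

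For the forward direction with $|V_i|\ge 2$, assume $\Gamma_i$ is isolated. Then $d_v^{\mathrm{in}} = \sum_{j\in V_i} w_{vj}$ for every $v\in V_i$, so each row of $\D_i$ sums to $0$ and the constant function on $V_i$ lies in the kernel; hence $0\in\mathrm{spec}(\D_i)$. For simplicity I will look at $P_i = I_i - \D_i$: by the same identity this is a nonnegative stochastic matrix, and its nonzero off-diagonal pattern coincides with the reversal of $\Gamma_i$, which is strongly connected. So $P_i$ is an irreducible stochastic matrix, and Perron--Frobenius yields that $1$ is a simple eigenvalue of $P_i$, i.e.\ that $0$ is a simple eigenvalue of $\D_i$. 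If $|V_i|=1$, say $V_i=\{v\}$, then $\Gamma_i$ is isolated iff $d_v^{\mathrm{in}}=0$, in which case $\D_i=(0)$ trivially has $0$ as a simple eigenvalue.

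For the converse, suppose $\Gamma_i$ is not isolated. When $|V_i|=1$ the unique vertex receives positive weight from outside, so $d_v^{\mathrm{in}}>0$ and $\D_i=(1)$, which does not have $0$ as an eigenvalue. When $|V_i|\ge 2$, strong connectivity together with the positive-weight convention forces $d_v^{\mathrm{in}}>0$ for every $v\in V_i$, so every diagonal entry of $\D_i$ is $1$ and the off-diagonal support again matches the reversal of $\Gamma_i$, making $\D_i$ irreducible. A direct row-sum computation yields
\[
\sum_{j\in V_i,\, j\neq v}\left|\frac{w_{vj}}{d_v^{\mathrm{in}}}\right| \;=\; 1 \;-\; \frac{1}{d_v^{\mathrm{in}}} \sum_{j\in V\setminus V_i} w_{vj} \;\le\; 1,
\]
with strict inequality for at least one $v$ since by assumption some vertex of $V_i$ receives positive weight from outside. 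Theorem \ref{D12} then applies to $\D_i$, giving that $\D_i$ is non-singular, so $0 \notin\mathrm{spec}(\D_i)$.

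The only mildly delicate points I expect are (i) the separate bookkeeping for single-vertex components, and (ii) verifying all hypotheses of Taussky's theorem — in particular that, thanks to positivity of the weights and strong connectivity, every diagonal entry of $\D_i$ equals $1$, so that ``weak diagonal dominance'' for $\D_i$ is the same as ``the row sums of $P_i$ are at most $1$''. Once these points are in hand, the proof is a short combination of Perron--Frobenius for the forward direction and Theorem \ref{D12} for the converse.
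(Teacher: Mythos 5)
Your proposal is correct and follows essentially the same route as the paper: Perron--Frobenius applied to the stochastic, irreducible matrix $P_i=I_i-\D_i$ for the isolated case (giving simplicity of the eigenvalue), Taussky's theorem (Theorem \ref{D12}) with one strictly dominant row for the non-isolated case, and a separate check for single-vertex components. The only differences are cosmetic (you make the constant kernel function and the row-sum bookkeeping explicit), so there is nothing to add.
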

\begin{proof}We observe that since $\Gamma\in \gp$, it follows that $d_j^\mathrm{in}\neq
0$ for all $j\in \Gamma_i$ and hence $\D_i$ is irreducible. First
assume that $\Gamma_i$ is not isolated. Assume further that
$\Gamma_i$ consists of more than one vertex. Then there exists a
vertex $k\in V_i$ s.t. $w_{kl}\neq 0$ for some $l\notin V_i$. For
vertex $k$ we have
\[ |(\Delta_{i})_{kk}|=1>\frac{\sum_{j\in V_i}|w_{kj}|}{\sum_{j\in
V}|w_{kj}|} = \sum_{j\in V_i}\frac{|w_{kj}|}{|d^{\mathrm{in}}_k|}
= \sum_{j\in V_i}|(\D_i)_{kj}|.\] For all other $j\in V_i$ we have
\[|(\D_i)_{jj}|=1 \geq \sum_{l\in
V_i}\frac{|w_{jl}|}{|d^{\mathrm{in}}_j|}= \sum_{l\in
V_i}|(\D_i)_{jl}|\] and hence by Theorem \ref{D12}, $0$ is not an
eigenvalue of $\D_i$. If $\Gamma_i$ consists of one vertex, then
$1$ is the only eigenvalue of $\D_i$ and hence $0$ is not an
eigenvalue of $\D_i$.

Now we assume that $\Gamma_i$ is isolated and consists of more
than one vertex. We consider the operator $P_i:= I_i - \D_i$,
where $I_i$ is the identity operator on $\Gamma_i$. Since all row
sums of $P_i$ are equal to one, it follows that the spectral
radius $\rho$ of $P_i$ is equal to one. Moreover, since $\Gamma\in
\gp$, it follows that $P_i$ is non-negative and irreducible. The
Perron-Frobenius theorem implies that $\rho=1$ is a simple
eigenvalue of $P_i$ and hence, by \rf{D14}, $0$ is a simple
eigenvalue of $\D_i$. If $\Gamma_i$ is an isolated vertex, then
clearly $0$ is a simple eigenvalue of $\D_i$.
\end{proof}

\begin{theo}\label{D40}For a graph $\Gamma \in\gp$
the following four statements are equivalent:
\begin{enumerate}\item[$(i)$] The multiplicity $m_1(P)$ of the eigenvalue one of $P$ is equal to $k$.
\item[$(ii)$] The multiplicity $m_0(\D)$ of the eigenvalue zero of
the Laplace operator  $\D$ is equal to $k$. \item[$(iii)$] There
exist $k$ isolated strongly connected components in $\Gamma$.
\item[$(iv)$] The minimum number of directed trees needed to span
the whole graph is equal to $k$.
\end{enumerate}
\end{theo}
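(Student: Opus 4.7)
The plan is to establish the cyclic chain $(i)\Leftrightarrow(ii)\Leftrightarrow(iii)\Leftrightarrow(iv)$, using the machinery already built up in the preceding sections. The first two equivalences are essentially spectral bookkeeping, while the last one is purely combinatorial and is where I expect the only real (though mild) friction.

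The equivalence $(i)\Leftrightarrow(ii)$ I would read off directly from the identity $\lambda(\D) = 1-\lambda(P)$ (equation \rf{D14}). This gives a multiplicity-preserving bijection between $\mathrm{spec}(P)$ and $\mathrm{spec}(\D)$ sending the eigenvalue $1$ of $P$ to the eigenvalue $0$ of $\D$, so $m_1(P)=m_0(\D)$ is immediate.

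For $(ii)\Leftrightarrow(iii)$ I would put $\D$ in Frobenius normal form \rf{D1} and invoke \rf{D15}, which writes $\mathrm{spec}(\D)$ as a union (with multiplicities) of the spectra of the diagonal blocks $\D_i$ corresponding to the strongly connected components $\Gamma_i$. Since $\Gamma\in\gp$, every vertex has nonzero in-degree, so Lemma \ref{D10} applies to every block and says: $0\in\mathrm{spec}(\D_i)$ iff $\Gamma_i$ is isolated, and in that case $0$ is a simple eigenvalue of $\D_i$. Summing contributions yields $m_0(\D) = \#\{i : \Gamma_i\ \text{isolated}\}$, which is exactly $(iii)$.

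For $(iii)\Leftrightarrow(iv)$ I would argue combinatorially on the condensation DAG. Let $S_1,\dots,S_k$ be the isolated strongly connected components; by Definition \ref{D72} each $S_i$ has no incoming edges from outside, so no directed out-tree rooted outside $S_i$ can reach any vertex of $S_i$. Hence any collection of out-trees spanning $\Gamma$ must contain at least one root in each $S_i$; since the $S_i$ are disjoint, this forces at least $k$ trees. For the matching upper bound I pick one vertex $r_i\in S_i$ per isolated component: in the condensation DAG every SCC has at least one source-ancestor, which is necessarily one of the $S_i$, so every vertex of $\Gamma$ is reachable from some $r_i$, and a BFS from the roots $r_1,\dots,r_k$ produces $k$ directed trees whose vertex sets cover $V$. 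The minimum is therefore exactly $k$.

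The only place I expect to slow down is $(iii)\Leftrightarrow(iv)$, specifically lining the argument up with the paper's definition of a spanning tree (a vertex from which all others are reachable via directed edges, i.e.\ an out-arborescence) and making sure the lower bound "one root per isolated SCC" is stated cleanly enough that the disjointness of the $S_i$ forces distinct trees. Everything else is assembly of Lemma \ref{D10}, the Frobenius normal form \rf{D1}--\rf{D15}, and the elementary identity \rf{D14}.
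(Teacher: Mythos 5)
Your proof is correct and takes essentially the same route as the paper: $(i)\Leftrightarrow(ii)$ from \rf{D14}, $(ii)\Leftrightarrow(iii)$ from Lemma \ref{D10} together with \rf{D1} and \rf{D15}, and $(iii)\Leftrightarrow(iv)$ by the Frobenius normal form, where your condensation-DAG argument (isolated strongly connected components are exactly the sources, hence exactly the required tree roots) is just the fleshed-out version of the paper's terse appeal to Lemma \ref{D8}$(iii)$. One minor inaccuracy: $\Gamma\in\gp$ does \emph{not} force every vertex to have nonzero in-degree (a vertex with no incoming edges has $d^{\mathrm{in}}=0$ and forms an isolated singleton component with block $\D_i=(0)$), but Lemma \ref{D10} requires no such hypothesis, so your conclusion is unaffected.
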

\begin{proof}
$(i) \Leftrightarrow (ii)$ follows from (\ref{D14}). $(ii)
\Leftrightarrow (iii)$ follows from Lemma \ref{D10} and \rf{D15}.
$(iii) \Leftrightarrow (iv)$  follows from the Frobenius normal
form and Lemma \ref{D8} ($iii$).
\end{proof}
A similar result was obtained for the algebraic graph Laplace
operator $L=D-W$ in \cite{Wu}. In the presence of negative
weights, Theorem \ref{D40} is not true anymore. However, for
general graphs $\Gamma\in\g$ we have the following:
\begin{coro}\label{D70}
For a graph $\Gamma\in\g$ we have: \begin{itemize}
\item[$(i)$]$m_1(P)=m_0(\D)$. \item[$(ii)$] The number of isolated
strongly connected components in $\Gamma$ is equal to the minimum
number of directed trees needed to span $\Gamma$. \item[$(iii)$]
The number of isolated strongly connected components in $\Gamma$
is less or equal to the multiplicity of the eigenvalue zero of
$\D$.
\end{itemize}
\end{coro}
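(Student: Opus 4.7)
The plan is to derive each of the three parts as a direct consequence of results already established in the excerpt, most notably the identity $P = I - \D$ and the Frobenius normal form \eqref{D1}--\eqref{D15}, so that no genuinely new machinery is needed.

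For part $(i)$, I would simply invoke $\D = I - P$: the characteristic polynomials of $\D$ and $P$ satisfy $\det(\D - \lambda I) = (-1)^n\det(P - (1-\lambda)I)$, and this equality of polynomials identifies algebraic multiplicities at corresponding eigenvalues. Hence $m_0(\D) = m_1(P)$. No use of the hypothesis $\Gamma \in \gp$ is needed here, so this holds in full generality.

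For part $(ii)$, I would extend the reasoning of Lemma \ref{D73}. Using the Frobenius normal form \eqref{D1}, the strongly connected components $\Gamma_1,\dots,\Gamma_z$ are in bijection with the vertices of a condensation DAG; by Lemma \ref{D8}$(i)$, the isolated SCCs are precisely the sources of this DAG (a block-row in the Frobenius form has zero off-diagonal blocks iff no directed edge enters the corresponding component from elsewhere). Let $k$ be the number of sources. For the lower bound, a directed tree rooted at some $v \in V_j$ can span $\Gamma$ only if every SCC is reachable from $\Gamma_j$, which forces $\Gamma_j$ to be a source; since distinct sources cannot reach one another in a DAG, at least $k$ trees are needed to cover them. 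For the upper bound, pick one root inside each isolated SCC; every SCC has at least one source as ancestor in the condensation DAG (walk backwards along in-edges until forced to stop), so every vertex of $\Gamma$ lies on a directed path from one of the $k$ chosen roots. Hence exactly $k$ trees suffice.

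For part $(iii)$, I would combine \eqref{D15} with Proposition \ref{DBasic}$(i)$. Because each isolated $\Gamma_i$ has $w_{ij}=0$ for $j\notin V_i$, the in-degrees of vertices in $V_i$ are computed using only edges inside $\Gamma_i$, so the diagonal block $\D_i$ coincides with the full Laplace operator of the standalone graph $\Gamma_i$. Proposition \ref{DBasic}$(i)$ then ensures that $0 \in \mathrm{spec}(\D_i)$ for each such component, and summing over the $k$ isolated components through \eqref{D15} gives $m_0(\D) \ge k$. The main subtlety worth flagging is that equality need not hold when $\Gamma\in\g$ admits negative weights: the Perron--Frobenius step used in Lemma \ref{D10} to guarantee \emph{simplicity} of the zero eigenvalue of $\D_i$ is unavailable here, which is exactly why only an inequality is asserted in $(iii)$ and why part $(ii)$ of the corollary must be stated in geometric terms rather than through spectral multiplicities as in Theorem \ref{D40}.
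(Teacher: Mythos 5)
Your proposal is correct and follows essentially the same route as the paper: part $(i)$ via the relation $P=I-\D$ (the paper's Eq.~(\ref{D14})), part $(ii)$ via the Frobenius normal form, i.e.\ the condensation of $\Gamma$ into its strongly connected components exactly as in Theorem \ref{D40}, and part $(iii)$ by noting that each isolated component carries its own Laplace operator with eigenvalue $0$, which is precisely the paper's combination of Proposition \ref{D9} and Proposition \ref{DBasic}$(i)$ together with \rf{D15}. One small nitpick: your parenthetical ``zero off-diagonal blocks iff no directed edge enters the component'' is only an implication, not an equivalence, once negative weights are allowed (compare Lemma \ref{D8}$(i)$ with $(iii)$ --- quasi-isolated vertices can annihilate a block), but your argument never uses the converse, since isolated components are sources of the condensation DAG directly by definition, so nothing is affected.
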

\begin{proof} The first two statements follows exactly in the same way as in
Theorem \ref{D40}, since the proof is not affected by the presence
of negative weights. The third assertion follows from the
observation that for every isolated strongly connected component
$\Gamma_i$ the Laplace operator $\D_i$ has at least one eigenvalue
equal to zero. This observation follows immediately from
Proposition \ref{D9} and Proposition \ref{DBasic} $(i)$.
\end{proof}

\section{Directed acyclic graphs}
\begin{defi}\label{D74}
A \textit{directed cycle} is a cycle with all edges being oriented
in the same direction. A vertex is a \textit{cyclic vertex} if it
is contained in at least one directed cycle. A graph is an
\textit{directed acyclic graph} if none of its vertices are
cyclic. The class of all directed acyclic graphs is denoted by
$\gcf$.
\end{defi}Note that a directed acyclic graph is not necessarily a
directed tree, because we do not exclude the existence of
topological cycles in the graph. If $\D$ is represented in the
Frobenius normal form, then we immediately obtain the following:
\begin{lemma}\label{Duppertriangular}The following three statements
are equivalent: \begin{itemize} \item[(i)] $\Gamma\in\gcf$ is a
directed acyclic graph.\item[(ii)] Every strongly connected
component of $\Gamma$ consists of exactly one vertex. \item[(iii)]
$\Delta$ represented in Frobenius normal form is upper triangular.
\end{itemize}
\end{lemma}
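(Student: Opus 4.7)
The plan is to prove the three-way equivalence via the chain $(i) \Rightarrow (ii) \Rightarrow (iii) \Rightarrow (i)$, using the block structure of the Frobenius normal form \rf{D1} as the central tool.

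For $(i) \Rightarrow (ii)$, I argue contrapositively. If some strongly connected component $V_k$ contains two distinct vertices $u,v$, then by strong connectivity there exist directed paths $u \rightsquigarrow v$ and $v \rightsquigarrow u$; their concatenation contains a directed cycle through $u$, so $u$ is cyclic and $\Gamma \notin \gcf$.

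For $(ii) \Rightarrow (iii)$ the argument is essentially immediate: if every SCC is a singleton, then each diagonal block $\D_k$ in \rf{D1} has size $1\times 1$ (a scalar $0$ or $1$), so the block upper triangular structure is already entry-wise upper triangular.

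For $(iii) \Rightarrow (i)$, assume $\D$ in Frobenius normal form is entry-wise upper triangular. Each diagonal block $\D_k$ is then itself upper triangular as a matrix. By the discussion following \rf{D1}, $\D_k$ is irreducible whenever $V_k$ contains no quasi-isolated vertex, and an irreducible upper triangular matrix must be $1\times 1$; hence $|V_k|=1$ for every such $k$, so every SCC is a singleton. In the contrapositive direction, a directed cycle in $\Gamma$ would lie entirely inside a single SCC, forcing that SCC to have at least two vertices, contradicting what we just established; hence $\Gamma \in \gcf$. The step I expect to be the main obstacle is precisely this implication in the presence of quasi-isolated vertices inside a nontrivial SCC: with positive and negative weights permitted, cancellation can make $d_i^{\mathrm{in}} = 0$ for a vertex $i$ belonging to an SCC of size $\ge 2$, in which case the corresponding row of $\D_k$ vanishes identically and the clean irreducibility argument no longer closes the case. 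I would address this either by restricting attention to $\Gamma \in \gp$ (where any vertex with an incoming edge has nonzero in-degree, so quasi-isolated vertices cannot arise inside nontrivial SCCs), or by supplementing irreducibility with a direct combinatorial inspection of the surviving nonzero rows of $\D_k$ together with the strong connectivity of $\Gamma_k$.
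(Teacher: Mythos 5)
Your implications $(i)\Rightarrow(ii)$ and $(ii)\Rightarrow(iii)$ are correct and are exactly the ``immediate'' reasoning the paper has in mind (the lemma is stated without proof as a direct consequence of the Frobenius normal form \rf{D1}). The genuine gap is in $(iii)\Rightarrow(i)$, and it is precisely the case you flagged but did not close. With signed weights the obstacle is not merely technical: if every vertex of a nontrivial strongly connected component is quasi-isolated, the corresponding rows of $\D$ vanish identically, so there are no ``surviving nonzero rows'' to inspect and your proposed combinatorial repair cannot work. Concretely, take $V=\{1,2,3,4\}$ with edges $(1,2),(2,1),(3,1),(4,2)$ and weights $w_{21}=a$, $w_{12}=b$, $w_{13}=-b$, $w_{24}=-a$ with $a,b\neq 0$. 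Then $d^{\mathrm{in}}_i=0$ for every $i$, so $\D$ is the zero matrix --- entrywise upper triangular in any vertex ordering --- while $\{1,2\}$ is a strongly connected component of size two and the graph contains the directed cycle $1\to 2\to 1$. So under your entrywise reading of (iii) the implication $(iii)\Rightarrow(ii)$ is simply false for general $\Gamma\in\g$, and your first fallback (restricting to $\Gamma\in\gp$, where every vertex of a nontrivial component has positive in-degree and the irreducibility argument does go through) proves a weaker statement than the lemma asserts.

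The reading consistent with the paper is the block-level one: the Frobenius normal form is built from the strongly connected components of $\Gamma$ (not from the zero pattern of the matrix $\D$), its diagonal blocks $\D_1,\ldots,\D_z$ are indexed by these components, and ``upper triangular'' in (iii) means that every diagonal block is $1\times 1$. With that reading $(ii)\Leftrightarrow(iii)$ is a tautology of the construction, and no irreducibility argument --- hence no discussion of quasi-isolated vertices --- is needed; combined with your correct $(i)\Leftrightarrow(ii)$ (a directed cycle lies inside a single component and forces it to have at least two vertices, and conversely a component with at least two vertices contains a directed cycle), this yields the whole lemma for all $\Gamma\in\g$. If you insist on the entrywise reading, you must either restrict to $\gp$ (which is in fact all that Theorem \ref{D33}(ii) later needs) or add the hypothesis that no vertex of a nontrivial component is quasi-isolated; as stated for general signed weights the entrywise version is not salvageable.
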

\begin{theo}\label{D33} \begin{itemize}\item[]\item[(i)]
If $\Gamma \in \gcf$ is a directed acyclic graph, then
$\mathrm{spec}(\D) \subseteq\{0,1\}$. Furthermore, $m_0(\D) =
|V\setminus V_R|$ and $m_1(\D) = |V_R|$. \item[(ii)] $\Gamma \in
\gp$ and $\mathrm{spec}(\D) \subseteq\{0,1\}$ if and only if
$\Gamma \in \g^{ac,+}$.
\end{itemize}
\end{theo}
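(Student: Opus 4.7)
For part (i), I would invoke Lemma \ref{Duppertriangular}: if $\Gamma\in\gcf$, then $\D$ in Frobenius normal form is upper triangular, with each diagonal block $\D_i$ a $1\times 1$ scalar. By construction these scalars equal $1$ when the corresponding vertex has nonzero in-degree and $0$ otherwise. Since the spectrum of an upper triangular matrix is the multiset of diagonal entries, $\mathrm{spec}(\D)\subseteq\{0,1\}$, with the eigenvalue $0$ contributed by vertices in $V\setminus V_R$ and the eigenvalue $1$ contributed by vertices in $V_R$. This yields $m_0(\D)=|V\setminus V_R|$ and $m_1(\D)=|V_R|$.

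For part (ii), the forward implication (``if $\Gamma\in\g^{\mathrm{ac},+}$, then $\Gamma\in\gp$ and $\mathrm{spec}(\D)\subseteq\{0,1\}$'') is immediate from (i) together with the assumption $\Gamma\in\gp$. The substantive direction is the converse: assume $\Gamma\in\gp$ with $\mathrm{spec}(\D)\subseteq\{0,1\}$, and prove $\Gamma$ is acyclic. I will argue by contradiction: suppose $\Gamma$ contains a directed cycle. Then by Lemma \ref{Duppertriangular} some strongly connected component $\Gamma_i$ has at least two vertices.

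Now consider $P_i:=I_i-\D_i$ restricted to this component. Because $\Gamma\in\gp$ and every vertex in a nontrivial strongly connected component has strictly positive in-degree (it receives from within $\Gamma_i$), $P_i$ is a well-defined entrywise nonnegative matrix whose off-diagonal sparsity pattern matches the internal edges of $\Gamma_i$; since $\Gamma_i$ is strongly connected, $P_i$ is irreducible. By the Perron--Frobenius theorem, the spectral radius $\rho(P_i)$ is a positive eigenvalue. On the other hand, \rf{D15} gives $\mathrm{spec}(\D_i)\subseteq\mathrm{spec}(\D)\subseteq\{0,1\}$, hence $\mathrm{spec}(P_i)\subseteq\{0,1\}$ via \rf{D14}. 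Since $\Gamma$ is loopless, the diagonal of $P_i$ is zero, so $\sum_j\lambda_j(P_i)=\mathrm{tr}(P_i)=0$. Combined with $\mathrm{spec}(P_i)\subseteq\{0,1\}$, this forces every eigenvalue of $P_i$ to vanish, contradicting $\rho(P_i)>0$.

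I expect the main obstacle to be exactly the converse of (ii): one must verify rigorously that $P_i$ is both nonnegative and irreducible (which uses both $\Gamma\in\gp$ and the fact that a strongly connected component with at least two vertices has no quasi-isolated vertex), so that Perron--Frobenius applies. The rest is bookkeeping via the trace identity from Proposition \ref{DBasic}(iii) applied to the block $\D_i$, or equivalently the loopless assumption on $\Gamma$.
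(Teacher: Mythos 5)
Your proposal is correct. Part (i) and the easy direction of (ii) coincide with the paper's argument, but your proof of the substantive converse of (ii) takes a genuinely different route. The paper, after extracting a strongly connected component $\Gamma_i$ with at least two vertices via Lemma \ref{Duppertriangular}, splits into two cases according to whether $\Gamma_i$ is isolated: in the isolated case it uses Lemma \ref{D10} (zero is a simple eigenvalue of $\D_i$), Proposition \ref{D9} and the trace bound of Corollary \ref{D35} to produce an eigenvalue with real part at least $\frac{n_i}{n_i-1}>1$; in the non-isolated case it uses Lemma \ref{D10} (zero is not an eigenvalue of $\D_i$) and Perron--Frobenius to produce an eigenvalue $1-\rho$ of $\D_i$ with $0<1-\rho<1$. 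You instead give a unified argument with no case distinction: Perron--Frobenius applied to the nonnegative irreducible block $P_i$ yields a positive eigenvalue, while $\mathrm{spec}(P_i)\subseteq\{0,1\}$ (via \rf{D15} and \rf{D14}) together with $\mathrm{tr}(P_i)=0$ (looplessness) forces all eigenvalues of $P_i$ to vanish -- a contradiction. Your verification of the hypotheses of Perron--Frobenius (nonnegativity from $\Gamma\in\gp$, positivity of in-degrees inside a nontrivial strongly connected component, irreducibility from strong connectedness) is exactly the point that needs care, and you handle it correctly; this is also where the paper's Lemma \ref{D10} does its work. What your route buys is brevity and independence from Lemma \ref{D10} and Corollary \ref{D35}; what the paper's case analysis buys is finer spectral information (it localizes the zero eigenvalue in isolated components and exhibits an eigenvalue with real part exceeding one there), which fits the surrounding development on extremal eigenvalues.
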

\begin{proof}
The first part follows immediately from Lemma
\ref{Duppertriangular}, the definition of $\D$, and \rf{D15}.
Thus, we only have to prove that if $\Gamma \in \gp$ and
$\mathrm{spec}(\D) \subseteq\{0,1\}$ then $\Gamma \in \g^{ac,+}$.
Assume the converse, i.e. assume that $\Gamma \in \gp$ and
$\mathrm{spec}(\D) \subseteq\{0,1\}$ but $\Gamma \notin
\g^{ac,+}$. Then, by Lemma \ref{Duppertriangular} there exists a
strongly connected component $\Gamma_i$ in $\Gamma$ consisting of
at least two vertices.  First, assume that $\Gamma_i$ is isolated.
Then, by Lemma \ref{D10} exactly one eigenvalue of $\D_i$ is equal
to zero. Using Proposition \ref{D9} and Corollary \ref{D35} we
conclude that there exists an eigenvalue
$\lambda\in\mathrm{spec}(\D)$ s.t. $\Re(\lambda) \geq
\frac{n_i}{n_i-1}>1$ where $n_i=|V_i|>1$. This is the desired
contradiction.  Now assume that $\Gamma_i$ is not isolated. By
Lemma \ref{D10}, all eigenvalues of $\D_i$ are non-zero.  Since
$\Gamma\in\gp$, $P_i$ is non-negative and irreducible. The
Perron-Frobenius theorem implies that the spectral radius $\rho$
of $P_i$ is positive and is an eigenvalue of $P_i$. By \rf{D14},
$1-\rho$ is an eigenvalue of $\D_i$ that satisfies $1>1-\rho>0$.
Hence, we have a contradiction to the assumption that
$\mathrm{spec}(\D) \subseteq\{0,1\}$.
\end{proof}
\begin{coro}\label{D75}
If $k$ eigenvalues of $\D$ are not equal to $0$ or $1$, then there
exists at least $k$ cyclic vertices in the graph.
\end{coro}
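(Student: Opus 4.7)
The plan is to combine the Frobenius normal form decomposition of the spectrum with Lemma \ref{Duppertriangular}, which identifies cyclic vertices as exactly those belonging to strongly connected components of size at least two. By \rf{D15}, the spectrum of $\D$ decomposes as the multiset union $\bigcup_{i=1}^z \mathrm{spec}(\D_i)$, where $\D_1,\ldots,\D_z$ correspond to the strongly connected components $\Gamma_1,\ldots,\Gamma_z$. Any eigenvalue outside $\{0,1\}$ must therefore come from at least one individual block $\D_i$.

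The first step is to dispose of the singleton components. If $|V_i|=1$, then $\D_i$ is a $1\times 1$ matrix whose single diagonal entry is, by the structural description following \rf{D1}, either $0$ (when the corresponding vertex is quasi-isolated) or $1$ (otherwise). Hence such a block contributes only eigenvalues lying in $\{0,1\}$ and no cyclic vertices. Consequently, every eigenvalue of $\D$ not equal to $0$ or $1$ is contributed by some strongly connected component $\Gamma_i$ with $|V_i|\ge 2$, and by Lemma \ref{Duppertriangular} all vertices in such a component are cyclic.

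The second step is a counting argument. Let $\mathcal{I}$ denote the set of indices $i$ with $|V_i|\ge 2$. Each block $\D_i$ with $i\in\mathcal{I}$ contributes exactly $|V_i|$ eigenvalues (counted with multiplicity), of which at most $|V_i|$ can fail to lie in $\{0,1\}$. Therefore, if $k$ eigenvalues of $\D$ are not equal to $0$ or $1$, then
\[
k \;\le\; \sum_{i\in\mathcal{I}} |V_i| \;=\; \#\{\text{cyclic vertices of }\Gamma\},
\]
which is the desired conclusion.

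There is essentially no obstacle here beyond invoking the right earlier results; the statement is a clean corollary of Lemma \ref{Duppertriangular} and the block-triangular structure \rf{D1}. The only point that requires a moment of care is confirming that singleton blocks really cannot produce an eigenvalue outside $\{0,1\}$, which is immediate from the explicit description of the diagonal entries of $\D_i$ given just after \rf{D1}.
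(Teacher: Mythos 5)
Your argument is correct and follows essentially the route the paper intends (it states the result as an unproved corollary of Lemma \ref{Duppertriangular} and Theorem \ref{D33}): via \rf{D1} and \rf{D15}, singleton strongly connected components contribute only eigenvalues in $\{0,1\}$, so every eigenvalue outside $\{0,1\}$ is charged to a component with at least two vertices, all of whose vertices are cyclic, giving the count $k\le\#\{\text{cyclic vertices}\}$. The only cosmetic point is that Lemma \ref{Duppertriangular} is stated for the whole graph rather than per vertex; the per-vertex fact you use (a vertex is cyclic iff it lies in a strongly connected component of size at least two, since the graph is loopless) is immediate but worth stating explicitly rather than attributing to the lemma.
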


\section{Extremal eigenvalues} In this section, we study eigenvalues $\lambda$ of
$\D$ that satisfy $|1-\lambda|=r$, i.~e.~eigenvalues that are
boundary points of the disc $\mathcal{D}(1,r)$ in Proposition
\ref{D16}.
\begin{defi}
Let $\Gamma\in\g$ and $\Gamma^\prime$ be an induced subgraph of
$\Gamma$. The induced subgraph $\Gamma^\prime$ is said to be
maximal if all vertices $i\in V^\prime$ satisfy\[r(i) = \max_l
r(l)=r,\] where as before
\[r(i) := \frac{\sum_{j\in V}|w_{ij}|}{|d^{\mathrm{in}}_i|}.\]\end{defi}
Note that, if we exclude isolated vertices, then  every graph with
nonnegative weights $\Gamma\in\gp$ is maximal. Thus, in
particular, every connected graph $\Gamma\in\gpu$ is maximal.
\begin{satz}\label{D18}
Let $\lambda\neq 0$ be an eigenvalue of $\D$ that satisfies
$|1-\lambda|=r$. Then $\Gamma$ possesses a maximal, isolated,
strongly connected component that consists of at least two
vertices.
\end{satz}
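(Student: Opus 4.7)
My plan is to localize $\lambda$ inside a single diagonal block of the Frobenius normal form of $\D_R$ and then extract rigidity from Taussky's theorem (Theorem \ref{D12}), which in this setting plays the role of the equality case of Gersgorin's circle theorem; this rigidity will simultaneously deliver maximality, isolation, and the at-least-two-vertex condition. Since $\lambda\neq 0$, equation \rf{D5} places $\lambda\in\mathrm{spec}(\D_R)$, and the Frobenius decomposition \rf{D15} applied to $\D_R$ picks out a strongly connected component $\Gamma_{R,p}$ of $\Gamma_R$ with $\lambda\in\mathrm{spec}(\D_{R,p})$. Because no vertex of $V_R$ is quasi-isolated, the block $\D_{R,p}$ is irreducible and each of its diagonal entries equals $1$.

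The core step is to apply Theorem \ref{D12} to $A:=\D_{R,p}-\lambda I$, which is irreducible and singular. For every $i\in V_{R,p}$ one has
\[|A_{ii}|=|1-\lambda|=r,\qquad \sum_{j\neq i}|A_{ij}|=\sum_{j\in V_{R,p}}\frac{|w_{ij}|}{|d_i^{\mathrm{in}}|}\leq r_1\leq r,\]
so $|A_{ii}|\geq \sum_{j\neq i}|A_{ij}|$ in every row. If strict inequality held in even one row, Theorem \ref{D12} would make $A$ non-singular, contradicting $\lambda\in\mathrm{spec}(\D_{R,p})$. Hence equality must hold throughout: $\sum_{j\in V_{R,p}}|w_{ij}|/|d_i^{\mathrm{in}}|=r$ for every $i\in V_{R,p}$.

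From the sandwich $r=\sum_{j\in V_{R,p}}|w_{ij}|/|d_i^{\mathrm{in}}|\leq r(i)\leq r$ I then read off both $r(i)=r$ (maximality of the subgraph on $V_{R,p}$) and $\sum_{j\notin V_{R,p}}|w_{ij}|=0$, i.e.\ $w_{ij}=0$ whenever $i\in V_{R,p}$ and $j\notin V_{R,p}$; by Definition \ref{D72} this is isolation of $\Gamma_{R,p}$ in $\Gamma$. Because no edge enters $V_{R,p}$ from outside, $V_{R,p}$ cannot sit inside a strictly larger strongly connected subset of $\Gamma$, so $\Gamma_{R,p}$ is in fact a strongly connected component of $\Gamma$ itself; and if it consisted of a single vertex $i$, the vanishing of all $w_{ij}$ together with $w_{ii}=0$ would force $d_i^{\mathrm{in}}=0$, contradicting $i\in V_R$. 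Thus $|V_{R,p}|\geq 2$.

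The step I expect to be the main obstacle is the rigidity extraction: turning the boundary hypothesis $|1-\lambda|=r$ into the much stronger assertion that Gersgorin equality holds in every single row of $\D_{R,p}$. This is precisely the contrapositive of Theorem \ref{D12}, and it is crucial here that all diagonal entries of $\D_{R,p}$ share the common centre $1$, so that the outer Gersgorin circle $|\zeta-1|=r$ simultaneously bounds every row; the remaining steps are just a careful translation between the Frobenius structure of $\D$ and the combinatorial vocabulary of Definition \ref{D72}.
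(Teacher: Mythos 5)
Your proof is correct, but it takes a genuinely different route from the paper's. The paper works with $P=I-\D$: it first establishes Lemma \ref{D58} (an eigenvalue $\lambda\neq 1$ of modulus $r$ lives on a strongly connected component with at least two vertices and its eigenfunction has constant modulus there) via a maximum-principle estimate, and then obtains non-quasi-isolation, isolation and maximality by three separate contradictions with that constancy. You instead shift the Frobenius block of the reduced Laplacian, $A=\D_{R,p}-\lambda I$, and invoke Taussky's theorem (Theorem \ref{D12}) — which the paper itself uses only for Lemma \ref{D10} — to force Gersgorin equality in every row; maximality, isolation, and $|V_{R,p}|\geq 2$ then fall out of row-sum bookkeeping with no eigenfunction at all. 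Two points are worth making explicit. First, Taussky's theorem is stated for irreducible matrices, and your block could a priori be $1\times 1$; either note that such a block has spectrum $\{1\}$ and is excluded because $|1-\lambda|=r\geq 1>0$, or observe (as your last step effectively does) that the equality conclusion itself forces $d_i^{\mathrm{in}}=0$ and kills that case. Second, your decomposition is by strongly connected components of $\Gamma_R$ rather than of $\Gamma$, so your closing argument that isolation promotes $\Gamma_{R,p}$ to a strongly connected component of $\Gamma$ itself is genuinely needed — and it is sound, since any path entering $V_{R,p}$ from outside would use an in-edge that isolation forbids. What the paper's longer route buys is the constant-modulus eigenfunction information of Lemma \ref{D58}, which is reused later (e.g.\ in the proof of Theorem \ref{D28}); what your route buys is brevity and a purely matrix-theoretic argument for this proposition.
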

Before we prove Proposition \ref{D18}, we consider the following
lemma.
\begin{lemma}\label{D58}
Let $\lambda\neq 1$, be an eigenvalue of $P$ that satisfies
$|\lambda|=r$. Then $\lambda$ is an eigenvalue of the Dirichlet
operator $P_k$ that corresponds to the strongly connected
component $\Gamma_k$ for some $k$. Furthermore, $\Gamma_k$
consists of at least two vertices and the corresponding
eigenfunction $u$ for $\lambda$ satisfies $|u(i)|= \mathrm{const}$
for all $i\in V_k$.
\end{lemma}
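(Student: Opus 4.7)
The plan is to combine the Frobenius normal form of $P$ with a maximum-modulus argument at a vertex where $|u|$ is maximal.

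First, from \rf{D15} applied to $\D$ together with $P = I - \D$, one has $\mathrm{spec}(P) = \bigcup_k \mathrm{spec}(P_k)$, so $\lambda$ must be an eigenvalue of some Dirichlet block $P_k$; fix a corresponding eigenfunction $u \not\equiv 0$ supported on $V_k$.

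Second, I rule out $|V_k|=1$. If $V_k = \{i\}$, then since $\Gamma$ is loopless $w_{ii}=0$, and $P_k$ reduces to the $1\times 1$ scalar $1$ (the ``else'' branch of the definition of $P$) when $d_i^{\mathrm{in}}=0$, or to $w_{ii}/d_i^{\mathrm{in}}=0$ when $d_i^{\mathrm{in}}\neq 0$; the unique eigenvalue is therefore $1$ (excluded by $\lambda\neq 1$) or $0$, respectively. To rule out the second case, note that the existence of a non-$1$ eigenvalue of $P$ forces $V_R\neq\emptyset$, and for any $i\in V_R$ the triangle inequality $|d_i^{\mathrm{in}}| = |\sum_j w_{ij}| \leq \sum_j |w_{ij}|$ yields $r(i)\geq 1$, hence $r\geq 1$ and $|\lambda|=r\geq 1>0$.

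Third, I prove that $|u|$ is constant on $V_k$ by a maximum-modulus argument. Set $M := \max_{j\in V_k}|u(j)|>0$ and fix $i_0\in V_k$ with $|u(i_0)|=M$. If $d_{i_0}^{\mathrm{in}}=0$ then $(P_k u)(i_0) = u(i_0)$, so $u(i_0)=\lambda u(i_0)$ with $\lambda\neq 1$ gives $u(i_0)=0$, contradicting $|u(i_0)|=M>0$. Hence $d_{i_0}^{\mathrm{in}}\neq 0$ and the chain
\begin{equation*}
|\lambda| M = \Bigl| \sum_{j\in V_k} \frac{w_{i_0 j}}{d_{i_0}^{\mathrm{in}}}\, u(j) \Bigr| \leq \frac{1}{|d_{i_0}^{\mathrm{in}}|}\sum_{j\in V_k}|w_{i_0 j}|\,|u(j)| \leq M\, r(i_0) \leq M r = M|\lambda|
\end{equation*}
collapses to equality. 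This forces simultaneously $w_{i_0 j}=0$ for all $j\notin V_k$ and $|u(j)|=M$ for every $j\in V_k$ with $w_{i_0 j}\neq 0$, i.e.\ for every in-neighbor of $i_0$ in $\Gamma_k$. The same argument applies verbatim at any vertex of $V_k$ where $|u|$ attains $M$, so $S := \{j\in V_k:|u(j)|=M\}$ is closed under taking in-neighbors in $\Gamma_k$. Since $\Gamma_k$ is strongly connected, for every $j\in V_k$ there is a directed path $j = v_0 \to v_1 \to \cdots \to v_m = i_0$; applying in-neighbor closure to $i_0\in S$ and walking backward along this path successively places $v_{m-1},\ldots,v_0=j$ in $S$, whence $S=V_k$.

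The main obstacle is the bookkeeping of equality cases in the third step, together with the direction of edges: because the eigenvalue equation at $i$ uses $u(j)$ for $j$ ranging over in-neighbors of $i$ (per the convention that the edge $(a,b)$ has weight $w_{ba}$), constancy of $|u|$ propagates \emph{backward} along edges, which is exactly the direction in which strong connectivity guarantees a directed path from every vertex of $V_k$ to $i_0$.
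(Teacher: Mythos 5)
Your proposal is correct and follows essentially the same route as the paper: localize $\lambda$ to a Dirichlet block $P_k$ via the Frobenius normal form, rule out one-vertex components using $\lambda\neq 1$ together with $r\geq 1$, and run a maximum-modulus argument at a maximizing vertex combined with strong connectivity of $\Gamma_k$. Your version merely makes explicit two points the paper states tersely (that equality forces every in-neighbor of a maximizer to be a maximizer, and the backward walk along a directed path), so no further comparison is needed.
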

\begin{proof}From \rf{D1} and  \rf{D15} it follow that $\lambda$ is an eigenvalue
of $P_k$ for some $1\leq k\leq z$. Since we assume that
$\lambda\neq 1$ it follows that $V_R\neq \emptyset$ and hence
$r\geq 1$. This in turn implies that $\Gamma_k$ consists of at
least two vertices because otherwise by Theorem \ref{D33} and
\rf{D14}, $P_k$ has only one eigenvalue which is either equal to
zero or one. So we only have to prove that $|u(i)|=
\mathrm{const}$  for all $i\in V_k$.

Assume that $|u|$ is not constant on $V_k$. Since $\Gamma_k$ is
strongly connected, there exists two vertices $i,j$ in $V_k$ that
satisfy $w_{ij}\neq 0$ and $|u(j)|<|u(i)|= \max_{l\in V_k}|u(l)|$.
Again, since $\lambda\neq 1$ it follows that $i\in V_R$ and hence
we have

\begin{eqnarray*}
\left|P_ku(i)\right| &=&
\left|\frac{1}{d^{\mathrm{in}}_i}\sum_{l\in V_k} w_{il}u(l)\right|
\leq \frac{1}{|d^{\mathrm{in}}_i|}\sum_{l\in V_k} |w_{il}||u(l)|
\\&<&r(i)\max_{l\in
V_k}|u(l)| \leq r \max_{l\in V_k}|u(l)|.
\end{eqnarray*}On the other hand we have \be
\left|P_ku(i)\right| = |\lambda||u(i)| = r \max_{l\in
V_k}|u(l)|.\qe This is a contradiction to the last equation.
\end{proof}
Now we prove Proposition \ref{D18}.
\begin{proof}
For simplicity, we consider $P$ instead of $\D$. Formulated in
terms of $P$ we have to show the following: Let $\lambda\neq 1$ be
an eigenvalue of $P$ that satisfies $|\lambda|=r$ then $\Gamma$
possesses an isolated, maximal, strongly connected component
consisting of at least two vertices. As in the proof of Lemma
\ref{D58} one can show that $\lambda$ is an eigenvalue of the
operator $P_k$ that corresponds to a strongly connected component
$\Gamma_k$ consisting of at least two vertices.

First we show that all vertices in $\Gamma_k$ are not
quasi-isolated. Assume that at least one vertex, say vertex $l$,
in $\Gamma_k$ is quasi-isolated. Then \[P_ku(l)= u(l) = \lambda
u(l).\] Since $\lambda\neq 1$ it follows that $u(l)=0$. Thus, we
have $|u(l)|<\max_{j\in V_k}|u(j)|$ which is a contradiction to
Lemma \ref{D58}.

Now we prove that $\Gamma_k$ is isolated. Assume that $\Gamma_k$
is not isolated, then there exists a vertex $i\in V_k$ and a
neighbor $j\notin V_k$ of $i$. Thus, we have for the vertex $i$
that
\begin{eqnarray*}
\left|P_ku(i)\right| &=&
\left|\frac{1}{d^{\mathrm{in}}_i}\sum_{l\in V_k}
w_{il}u(l)\right|\leq \frac{1}{|d^{\mathrm{in}}_i|}\sum_{l\in V_k}
|w_{il}||u(l)|\\&<& \frac{1}{|d^{\mathrm{in}}_i|}\sum_{l\in V}
|w_{il}|\max_{l \in V_k} |u(l)| = r(i)\max_{l\in V_k}|u(l)|
\\&\leq& r \max_{l\in
V_k}|u(l)|.
\end{eqnarray*}
On the other hand, we have \be \label{D45} \left|P_ku(i)\right| =
|\lambda||u(i)|= r|u(i)|.\qe Comparing these two equations yields
\[|u(i)| < \max_{l\in V_k}|u(l)|.\]Again, this is a
contradiction to Lemma \ref{D58}.

Finally, we have to prove that the strongly connected component
$\Gamma_k$ is maximal. Assume that $\Gamma_k$ not maximal. Then
there exists a vertex, say $i\in V_k$, such that $r(i)<r$. We
conclude that
\begin{eqnarray*}|P_ku(i)| &=& \left|\frac{1}{d^{\mathrm{in}}_i}\sum_{l\in V_k}
w_{il}u(l)\right| \leq \frac{1}{|d^{\mathrm{in}}_i|}\sum_{l\in
V_k}|w_{il}||u(l)|\\&\leq& \frac{1}{|d^{\mathrm{in}}_i|}\sum_{l\in
V}|w_{il}|\max_{l\in V_k}|u(l)|= r(i)\max_{l\in V_k}|u(l)|
\\&<& r\max_{l\in V_k}|u(l)|.
\end{eqnarray*} Together with \rf{D45} this implies that $|u(i)|
<\max_{l\in V_k}|u(l)|$. Again, this is a contradiction to Lemma
\ref{D58}.
\end{proof}
In Proposition \ref{D18} we have to exclude the eigenvalue
$\lambda=0$. However, if we assume that all vertices are not
quasi-isolated, Proposition \ref{D18} also holds for $\lambda=0$.
\begin{satz}\label{D22}
Let $\Gamma\in\g$ and assume that all vertices are not
quasi-isolated. If $\lambda =0$ is an eigenvalue of $\D$ that
satisfies $|1-\lambda|=r$, then there exists a maximal, isolated,
strongly connected component consisting of at least two vertices
in $\Gamma$.
\end{satz}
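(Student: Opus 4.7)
The plan is to mirror the argument of Proposition \ref{D18}, relaxing its exclusion of $\lambda = 0$ by exploiting the new hypothesis that no vertex is quasi-isolated. Passing to $P = I - \D$ via \rf{D14}, the statement becomes: if $\mu = 1$ is an eigenvalue of $P$ with $|\mu| = r$ (so $r = 1$), then some maximal, isolated, strongly connected component of $\Gamma$ contains at least two vertices.

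By the Frobenius normal form \rf{D1} together with \rf{D15}, $1 \in \mathrm{spec}(P_k)$ for some strongly connected component $\Gamma_k$. I would first verify $|V_k| \geq 2$: if $V_k$ were a singleton $\{i\}$, then $d_i^{\mathrm{in}} \neq 0$ by hypothesis and the graph is loopless, so $\D_k = (1)$ and $P_k = (0)$, whose only eigenvalue is $0 \neq 1$. Next, I would establish that an eigenfunction $u$ of $P_k$ for $\mu = 1$ has constant modulus on $V_k$, which is the conclusion of Lemma \ref{D58} in its forbidden case. Inspection of the proof of Lemma \ref{D58} reveals that $\mu \neq 1$ is invoked only to ensure that $V_R \neq \emptyset$ and that any vertex attaining $\max_{l\in V_k}|u(l)|$ lies in $V_R$. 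Under our standing hypothesis $V_R = V$, both points are automatic, and the maximum-principle contradiction of Lemma \ref{D58}---picking a $V_k$-neighbor $j$ of a maximizer $i$ with $w_{ij} \neq 0$ and $|u(j)| < |u(i)|$, then bounding
\[
|P_k u(i)| \leq \frac{1}{|d^{\mathrm{in}}_i|}\sum_{l\in V_k}|w_{il}||u(l)| < r(i)\max_{l\in V_k}|u(l)| \leq r\max_{l\in V_k}|u(l)|
\]
against $|P_k u(i)| = |\mu||u(i)| = r\max_{l\in V_k}|u(l)|$---goes through verbatim.

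With $|u|$ constant on $V_k$, the remaining properties of $\Gamma_k$ follow by the same two maximum-principle arguments as in Proposition \ref{D18}. If $\Gamma_k$ were not isolated, some $i\in V_k$ would possess a neighbor $j \notin V_k$ with $w_{ij}\neq 0$, forcing $|P_k u(i)| < r|u(i)| = |u(i)|$ and contradicting the eigenvalue equation; likewise, a vertex $i \in V_k$ with $r(i) < r$ would yield $|P_k u(i)| \leq r(i)|u(i)| < |u(i)|$, again contradicting $|P_k u(i)| = |u(i)|$. Hence $\Gamma_k$ is a maximal, isolated, strongly connected component with at least two vertices.

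The decisive step, where the bulk of the care is required, is the constancy of $|u|$ on $V_k$. This is exactly the conclusion Lemma \ref{D58} could not deliver for $\mu = 1$, and the hypothesis that no vertex is quasi-isolated is precisely what neutralizes the obstruction: it forces $V_R = V$, so the weighted-average expression for $P_k u(i)$ is valid at every vertex of $V_k$ and the strict inequality at a maximum of $|u|$ propagates as in the proof of Lemma \ref{D58}.
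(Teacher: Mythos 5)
Your proof is correct, but it takes a genuinely different and considerably longer route than the paper's. The paper's own argument is purely structural and never touches an eigenfunction: since no vertex is quasi-isolated, $r(i)=\frac{\sum_j|w_{ij}|}{|\sum_j w_{ij}|}\geq 1$ for every $i\in V$, and the hypothesis $|1-0|=r$ forces $r=1$, hence $r(i)=1=r$ for all $i$ and \emph{every} strongly connected component is automatically maximal; Lemma \ref{D73} supplies an isolated strongly connected component, and such a component cannot be a singleton because an isolated single vertex in a loopless graph has $d_i^{\mathrm{in}}=0$ and would be quasi-isolated. You instead extend Lemma \ref{D58} to the excluded eigenvalue $\mu=1$ of $P$ and rerun the full maximum-principle argument of Proposition \ref{D18}. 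Your observation that the hypothesis $V_R=V$ is exactly what repairs the places where Lemma \ref{D58} invokes $\lambda\neq 1$ is accurate, and the subsequent isolation and maximality steps are sound (the maximality step is in fact vacuous here, since $r(i)=r$ holds at every vertex once $r=1$). What your approach buys is slightly more information --- it exhibits a specific maximal isolated component, namely one whose Dirichlet operator $P_k$ actually carries the eigenvalue $1$ --- at the cost of redoing machinery that the hypothesis renders unnecessary; the paper's approach buys brevity by noting that the conclusion already follows from $r=1$ together with Lemma \ref{D73}.
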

\begin{proof}Since $V=V_R$ we have for all $i\in V$ that $r(i)\geq 1$.
By assumption, we have $1=r$ and hence $r(i) =1 $ for all $i\in
V$. This implies that every strongly connected component in
$\Gamma$ is maximal. By Lemma \ref{D73} every graph contains an
isolated strongly connected component. Since $\lambda=0$ and we
exclude quasi-isolated vertices it follows that there exists an
isolated maximal strongly connected component in $\Gamma$ that
consists of at least two vertices.
\end{proof}
\section{$k$-partite graphs and anti-$k$-partite graphs}
\subsection{$k$-partite graphs}
\begin{defi}
$\Gamma \in \g$ is $k$-partite, $k\geq 2$, if $d^{\mathrm{in}}_i
\neq 0$ for all $i\in V$ and the vertex set $V$ consists of $k$
nonempty subsets $V_1,\ldots, V_k$ such that the following holds:
There are only edges from vertices  $j\in V_{q-1}$ to vertices
$i\in V_{q}$, $q=1,\ldots,k$,  if
$\frac{w_{ij}}{d^{\mathrm{in}}_i}>0$ and if $k$ is even from
vertices $j\in V_{q+l}$ to vertices $i\in V_q$, $q=1,\ldots,k$, if
$\frac{w_{ij}}{d^{\mathrm{in}}_i}<0$ where $l=\frac{k}{2}-1\in \N$
and we identify $V_{k+1}$ with $V_1$.
\end{defi}
The condition $l=\frac{k}{2}-1\in \N$ implies that, in a
$k$-partite graph, there can only exists weights satisfying
$\frac{w_{ij}}{d^{\mathrm{in}}_i}<0$ if $k$ is even. The special
choice of $l$ ensures that the distance between different
neighbors of one particular vertex, say vertex $i$, is a multiple
of $\frac{k}{2}$. If the distance of two neighbors $s,t$ of $i$ is
an odd multiple of $\frac{k}{2}$, then $s,t$ belong to different
subsets and $\frac{w_{is}}{w_{it}}<0$. If the distance between
$s,t$ is an even multiple of $\frac{k}{2}$, then $s,t$ belong to
the same subset and $\frac{w_{is}}{w_{it}}>0$.

\begin{theo}\label{D28}
$\Gamma\in\g$ contains a $k$-partite isolated maximal strongly
connected component if and only if $1-re^{\pm 2\pi i \frac{1}{k}}$
are eigenvalues of $\D$.
\end{theo}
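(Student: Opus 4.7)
The strategy is to pass to $P = I - \D$ and observe that $1 - re^{\pm 2\pi i/k}$ are eigenvalues of $\D$ if and only if $\lambda_{\pm} := re^{\pm 2\pi i/k}$ are eigenvalues of $P$. Since $|\lambda_{\pm}| = r$, these are extremal in the sense of Section 6, so Lemma \ref{D58} together with the machinery of Proposition \ref{D18} provides a maximal isolated strongly connected component $\Gamma_K$ on which a corresponding eigenfunction lives, has constant modulus, and encodes the partition structure. The proof then splits into (i) constructing an explicit eigenfunction from a given $k$-partition, and (ii) reading off a $k$-partition from a constant-modulus eigenfunction.

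For the direction ($\Leftarrow$), suppose $\Gamma$ contains a $k$-partite isolated maximal SCC $\Gamma_K$ with partition $V_1, \ldots, V_k$. Set $\zeta := e^{2\pi i/k}$ and define $u(i) := \zeta^{-q}$ for $i \in V_q$. For $i \in V_q$ I would split $P_K u(i) = \frac{1}{d_i^{\mathrm{in}}}\sum_{j \in V_K} w_{ij} u(j)$ according to the $k$-partite structure: positive-ratio predecessors sit in $V_{q-1}$ (contributing $u(j) = \zeta^{-(q-1)}$), while, when $k$ is even, negative-ratio predecessors sit in $V_{q+l}$ with $l = k/2-1$ (contributing $u(j) = \zeta^{-(q+l)} = -\zeta^{-(q-1)}$, using $\zeta^{-k/2}=-1$). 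Writing $A_+$ and $A_-$ for the sums of $w_{ij}$ over positive- and negative-ratio predecessors, this yields $P_K u(i) = \zeta^{-(q-1)}(A_+ - A_-)/d_i^{\mathrm{in}}$. Because $A_+$ has the sign of $d_i^{\mathrm{in}}$ and $A_-$ the opposite sign, $(A_+ - A_-)/d_i^{\mathrm{in}} = (|A_+| + |A_-|)/|d_i^{\mathrm{in}}| = r(i) = r$ by maximality. Hence $P_K u = r\zeta u$, so $r\zeta \in \mathrm{spec}(P_K) \subseteq \mathrm{spec}(P)$ by isolation and \rf{D15}, giving $1 - r\zeta \in \mathrm{spec}(\D)$; Proposition \ref{DBasic}(ii) produces the conjugate.

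For the direction ($\Rightarrow$), assume $r\zeta \in \mathrm{spec}(P)$. By Lemma \ref{D58}, $r\zeta \in \mathrm{spec}(P_K)$ for some strongly connected component $\Gamma_K$ with $|V_K|\geq 2$ whose eigenfunction $u$ satisfies $|u|\equiv c$; the proof of Proposition \ref{D18} further shows that this $\Gamma_K$ is maximal, isolated, and has no quasi-isolated vertices. Normalize $c=1$. The saturated triangle inequality $|r\zeta d_i^{\mathrm{in}} u(i)| = \bigl|\sum_{j\in V_K} w_{ij} u(j)\bigr| \le \sum_j |w_{ij}| = r|d_i^{\mathrm{in}}|$ forces every summand $w_{ij}u(j)$ to carry the common phase $\mathrm{sgn}(d_i^{\mathrm{in}})\zeta u(i)$, yielding the key relation
\[
u(j) \;=\; \mathrm{sgn}\!\left(\frac{w_{ij}}{d_i^{\mathrm{in}}}\right)\zeta\, u(i)
\]
for every edge $(j,i)$ in $\Gamma_K$. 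Fixing $i_0\in V_K$ with $u(i_0)=1$, define $V_q := \{i\in V_K : u(i)=\zeta^{-q}\}$ for $q=0,\ldots,k-1$. A direct reading of the relation shows that a positive-ratio edge into $i\in V_q$ has its source in $V_{q-1}$, while a negative-ratio edge (possible only for $k$ even, since then $-\zeta^{-(q-1)} = \zeta^{-(q+l)}$) has its source in $V_{q+l}$. Strong connectivity of $\Gamma_K$ then propagates $u$ through all powers $\zeta^{-q}$, so every $V_q$ is nonempty, exhibiting $\Gamma_K$ as $k$-partite.

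The main obstacle is controlling the signs in the backward direction when $k$ is odd. For $k$ even, $-\zeta^{-q} = \zeta^{k/2 - q}$ is itself a $k$-th root of unity, so $u$ automatically takes values in $\{\zeta^{-q}\}$ and the classes $V_q$ exhaust $V_K$. For $k$ odd one must rule out $u$ taking any value of the form $-\zeta^{-q}$; equivalently, one must show that every edge of $\Gamma_K$ has positive ratio. The cycle-consistency identity $(-1)^s \zeta^m = 1$ (for a directed cycle in $\Gamma_K$ of length $m$ with $s$ negative-ratio edges) forces $s$ even and $k\mid m$ when $k$ is odd; a $\{\pm 1\}$-valued gauge transformation $\tilde u(i) = \sigma(i)u(i)$ chosen so that $\sigma(j)/\sigma(i) = \mathrm{sgn}(w_{ij}/d_i^{\mathrm{in}})$ on every edge (well-defined precisely because the cycle parity constraint holds) absorbs all the signs, so $\tilde u$ takes values in $\{\zeta^{-q}\}$ only and the $V_q$ truly partition $V_K$.
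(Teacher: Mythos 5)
Your forward construction (the explicit eigenfunction built from a $k$-partition, using isolation and maximality to turn the mixed-sign sum into $r(i)=r$) and your backward strategy via Lemma \ref{D58} and Proposition \ref{D18}, the saturated triangle inequality, and the phase relation along edges are exactly the paper's proof; the even-$k$ case of your partition argument is also the paper's. The genuine problem is your handling of odd $k$ when negative normalized weights occur. You correctly observe that there one must show every edge of $\Gamma_K$ satisfies $w_{ij}/d^{\mathrm{in}}_i>0$, because for odd $k$ the paper's definition of $k$-partite forbids negative normalized weights altogether; but your gauge transformation does not show this. Replacing $u$ by $\tilde u=\sigma u$ only modifies the eigenfunction: it yields a partition of $V_K$ into $k$ classes, while the weights of the graph, and hence the sign and placement conditions that $k$-partiteness imposes, are unchanged. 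So the concluding claim that this "exhibits $\Gamma_K$ as $k$-partite" does not follow.

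Worse, the missing step cannot be supplied, because the eigenvalue hypothesis does not force all ratios to be positive when $k$ is odd. Take $k=3$ and vertices $0,\dots,5$ (indices mod $6$); for every $t$ put an edge of weight $a$ from $t+2$ to $t$ and an edge of weight $-b$ from $t-1$ to $t$, with $a>b>0$. Then $d^{\mathrm{in}}_t=a-b\neq 0$, the graph is strongly connected (the negative edges form a directed $6$-cycle), isolated and maximal with $r=\frac{a+b}{a-b}>1$, and $u(t)=e^{\pi i t/3}$ satisfies $Pu(t)=\frac{1}{a-b}\bigl(a e^{\pi i (t+2)/3}-b e^{\pi i (t-1)/3}\bigr)=re^{2\pi i/3}u(t)$, so $1-re^{\pm 2\pi i/3}\in\mathrm{spec}(\D)$; yet negative normalized weights are present, so no strongly connected component is $3$-partite in the paper's sense. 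Note that here the constant-modulus eigenfunction takes $2k$ values, not $k$, and every directed cycle has an even number of negative edges, so your gauge $\sigma$ exists — which is precisely why the gauge trick records the obstruction rather than removing it. (The paper's own proof is terse at this very point: it asserts that only $k$ distinct $u$-values occur in the mixed-sign case, which is what fails for odd $k$; so this case needs either the restriction to even $k$, an extra positivity hypothesis, or a broadened definition, rather than the argument you propose.)
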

\begin{proof} Again, for technical reasons, we consider $P$ instead of $\D$.
Since the eigenvalues appear in complex conjugate pairs
(Proposition \ref{DBasic} (ii)), it is sufficient to show that
$re^{2 \pi i \frac{1}{k}}$ is an eigenvalue of $P$. Assume that
$\Gamma$ contains a $k$-partite isolated maximal strongly
connected component $\Gamma_p$. We claim that the function
\[u^1(j) = \left\{\begin{array}{c}e^{2\pi i \frac{k}{k}}\\e^{2\pi i
\frac{k-1}{k}}\\\vdots\\e^{2\pi i \frac{1}{k}}
\end{array}\right.\begin{array}{c}\mbox{ if }  j \in
V_{p,1}\\ \mbox{ if }  j \in V_{p,2}\\\vdots\\
\mbox{ if }  j \in V_{p,k},
\end{array}\]where $V_{p,1},\ldots,V_{p,k}$
is a $k$-partite decomposition of $V_p$, is an eigenfunction for
the eigenvalue $re^{2\pi i \frac{1}{k}}$ of $P_p$. For any $j\in
V_{p,q}$, $1\leq q\leq k$, we have
\begin{eqnarray*}
P_pu^1(j)&=& \frac{1}{d_j^\mathrm{in}} \sum_{t\in V_p}w_{jt}u^1(t)
\\&=& \frac{1}{d_j^\mathrm{in}}\left( \sum_{t\in
V_{p,q-1}}w_{jt}u^1(t) + \sum_{t\in
V_{p,q+l}}w_{jt}u^1(t)\right)\\&=&
\frac{1}{d_j^\mathrm{in}}\left(\sum_{t\in V_{p,q-1}}w_{jt}e^{2\pi
i \frac{1}{k}} u^1(j) + \sum_{t\in V_{p,q+l}}w_{jt}e^{-\pi
i}e^{2\pi i \frac{1}{k}} u^1(j)\right)\\&=&
\frac{1}{|d^{\mathrm{in}}_j|}\sum_{t\in V_{p,q-1}}|w_{jt}|e^{2\pi
i \frac{1}{k}}u^1(j) -\frac{1}{|d^{\mathrm{in}}_j|}\sum_{t\in
V_{p,q+l}}|w_{jt}|e^{-\pi i}e^{2\pi i \frac{1}{k}}u^1(j) \\&=&
\frac{1}{|d^{\mathrm{in}}_j|}\sum_{t\in V_p}|w_{jt}|e^{2\pi i
\frac{1}{k}}u^1(j)
=\frac{1}{|d^{\mathrm{in}}_j|}\sum_{t\in V}|w_{jt}|e^{2\pi i \frac{1}{k}}u^1(j)\\
&=& r(j) e^{2\pi i \frac{1}{k}}u^1(j) = r e^{2\pi i
\frac{1}{k}}u^1(j),
\end{eqnarray*}
where we used that the $k$-partite component $\Gamma_p$ is
isolated and maximal. We conclude that $r e^{2\pi i \frac{1}{k}}$
is an eigenvalue of $P_p$ and, by Proposition \ref{D9}, $r e^{2\pi
i \frac{1}{k}}$ is an eigenvalue of $P$.

Now assume that $re^{2\pi i \frac{1}{k}}$ is an eigenvalue of $P$.
Since $|re^{2\pi i \frac{1}{k}}|=r$ and $re^{2\pi i
\frac{1}{k}}\neq 1$, Proposition \ref{D18} implies that $\Gamma$
contains an isolated maximal strongly connected component
$\Gamma_p$ and $re^{2\pi i \frac{1}{k}}$ is an eigenvalue of the
corresponding Dirichlet operator $P_p$. We only have to prove that
$\Gamma_p$ is $k$-partite.

Let $u\in C(V_p)$ be an eigenfunction for the eigenvalue $r
e^{2\pi i \frac{1}{k}}$. On the one hand, since $\Gamma_p$ is
maximal and isolated, all $j \in V_p$ satisfy
\begin{eqnarray} P_pu(j) &=& re^{2\pi i \frac{1}{k}}u(j) =
\frac{1}{|d^{\mathrm{in}}_j|}\sum_{t\in V}|w_{jt}|e^{2\pi i
\frac{1}{k}}u(j)  \\&=& \frac{1}{|d^{\mathrm{in}}_j|}\sum_{t\in
V_p}|w_{jt}|e^{2\pi i \frac{1}{k}}u(j).
\end{eqnarray}
On the other hand \begin{equation} P_pu(j) =
\frac{1}{d^{\mathrm{in}}_j}\sum_{t\in
V_p}w_{jt}u(t).\end{equation} Comparing these two equations yields
\begin{equation}\label{D24} \sum_{t\in V_p}\frac{|w_{jt}|}{|d^{\mathrm{in}}_j|}
=\sum_{t\in
V_p}\frac{w_{jt}}{d^{\mathrm{in}}_j}\frac{u(t)}{u(j)}e^{-2\pi i
\frac{1}{k}}.
\end{equation} Lemma \ref{D58} implies that the
eigenfunction $u$ satisfies $|u(t)| = |u(j)|$ for all $j,t\in
V_p$. Thus, $\frac{u(t)}{u(j)}e^{-2\pi i \frac{1}{k}}$ is a
complex number whose absolute value is equal to one. Since we
consider only real weights, we have equality in \rf{D24} if \be
\label{D26} u(j) = e^{-2\pi i \frac{1}{k}}u(t),\qe whenever
$\frac{w_{jt}}{d^{\mathrm{in}}_j}>0$ and\[u(j) =-e^{-2\pi i
\frac{1}{k}}u(t)= e^{-\pi i}e^{-2\pi i \frac{1}{k}}u(t),\]
whenever $\frac{w_{jt}}{d^{\mathrm{in}}_j}<0$.

First, assume that $\frac{w_{ij}}{d^{\mathrm{in}}_i}>0$ for all
edges in $\Gamma_p$. If $t$ is a neighbor of $j$ then the
eigenfunction has to satisfy equation \rf{D26}. Since $\Gamma_p$
is strongly connected we can uniquely assign to each vertex $i$ a
value $u(i)$ such that every $k$-th vertex in a directed path has
the same value since $\left(e^{-2\pi i \frac{1}{k}}\right)^k = 1$.
Now decompose the vertex set into $k$ non-empty subsets s.t. all
vertices with the same $u$-value belong to the same subset of
$V_p$. This yields a $k$-partite decomposition of $\Gamma_p$.

If there also exist edges s.t.
$\frac{w_{ij}}{d^{\mathrm{in}}_i}<0$ is satisfied, then the
crucial observation is that if
$\frac{w_{jt}}{d^{\mathrm{in}}_j}<0$ for some $j$ and $t$ then
there has to exist another neighbor $s$ of $j$ s.t.
$\frac{w_{js}}{d^{\mathrm{in}}_j}>0$. We conclude that every
vertex $j$ has at least one neighbor $s$ such that
$\frac{w_{js}}{d_j^\mathrm{in}}>0$. Thus, there exist $k$
different $u$-values and we can find a $k$-partite decomposition
of $V_p$ similarly as in the case studied before.
\end{proof}
Even if we do not require that the $k$-partite component is
maximal we have:
\begin{coro}
Let $\Gamma\in\g$ contain a $k$-partite isolated strongly
connected component $\Gamma_p$, and let $r(j)=c$ for all $j\in
V_p$ and some constant $c$. Then $1-ce^{\pm 2\pi i \frac{1}{k}}$
are eigenvalues of $\D$.
\end{coro}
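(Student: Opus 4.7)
The plan is to mimic the forward direction of the proof of Theorem \ref{D28}, relaxing the role of maximality to the hypothesis $r(j)=c$ for $j\in V_p$. As in that proof I work with $P=I-\D$ in place of $\D$; by \rf{D14} and Proposition \ref{DBasic}(ii), it suffices to exhibit $ce^{2\pi i/k}$ as an eigenvalue of $P$, from which $1-ce^{\pm 2\pi i/k}$ will follow as eigenvalues of $\D$.

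Let $V_{p,1},\ldots,V_{p,k}$ be a $k$-partite decomposition of $V_p$ and define on $V_p$ the same candidate eigenfunction used in Theorem \ref{D28}, namely $u^1(j)=e^{2\pi i\frac{k+1-q}{k}}$ for $j\in V_{p,q}$. For any $j\in V_{p,q}$ I then run the identical chain of equalities, splitting $\sum_{t\in V_p}w_{jt}u^1(t)$ according to the $k$-partite structure: vertices in $V_{p,q-1}$ contribute summands with $\frac{w_{jt}}{d_j^{\mathrm{in}}}>0$, and (when $k$ is even) vertices in $V_{p,q+l}$ contribute summands with $\frac{w_{jt}}{d_j^{\mathrm{in}}}<0$, the extra factor $e^{-\pi i}$ coming from the eigenfunction combining with the sign to produce $|w_{jt}|$ in absolute value. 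Since $\Gamma_p$ is isolated, $\sum_{t\in V_p}|w_{jt}|=\sum_{t\in V}|w_{jt}|$ for every $j\in V_p$, so the computation telescopes to $r(j)\,e^{2\pi i/k}u^1(j)$. In Theorem \ref{D28} the next step rewrote $r(j)$ as the global maximum $r$ using maximality; here I instead use the hypothesis $r(j)=c$ to rewrite it as $c$, obtaining $P_pu^1(j)=c\,e^{2\pi i/k}u^1(j)$.

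Thus $ce^{2\pi i/k}$ is an eigenvalue of the Dirichlet operator $P_p$ on the isolated strongly connected component $\Gamma_p$. Since $\Gamma_p$ is an isolated induced subgraph of $\Gamma$, Proposition \ref{D9}(ii) gives $\mathrm{spec}(\D_p)\subseteq\mathrm{spec}(\D)$, which via \rf{D14} translates into $\mathrm{spec}(P_p)\subseteq\mathrm{spec}(P)$; hence $ce^{2\pi i/k}\in\mathrm{spec}(P)$. Complex conjugation (Proposition \ref{DBasic}(ii)) then yields $ce^{-2\pi i/k}\in\mathrm{spec}(P)$ as well, and one final use of \rf{D14} produces $1-ce^{\pm 2\pi i/k}\in\mathrm{spec}(\D)$, as required.

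I do not anticipate any genuine obstacle: the only conceptual point is that maximality was used in Theorem \ref{D28} solely to convert the pointwise quantity $r(j)$ into the global value $r$, and the hypothesis $r(j)=c$ performs exactly the same task with $c$ in place of $r$. The only bookkeeping care needed is in the sign-tracking when $k$ is even, which has already been carried out in the proof of Theorem \ref{D28} and transfers verbatim.
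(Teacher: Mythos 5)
Your proposal is correct and is exactly the argument the paper intends: the corollary is stated without proof precisely because, as you observe, maximality enters the forward computation of Theorem \ref{D28} only at the final step where $r(j)$ is identified with $r$, and the hypothesis $r(j)=c$ on the isolated component substitutes for it verbatim. The remaining steps (the eigenfunction $u^1$, the use of isolation to pass from $\sum_{t\in V_p}|w_{jt}|$ to $\sum_{t\in V}|w_{jt}|$, Proposition \ref{D9}, and conjugation via Proposition \ref{DBasic}(ii)) match the paper's treatment.
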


Theorem \ref{D28} can be used to characterize the graph $\Gamma\in
\gp$ whose spectrum contains the distinguished eigenvalues
$1-e^{\pm2\pi i \frac{1}{n}}$ in Figure \ref{DFig.1}. As a special
case of Theorem \ref{D28} we obtain:
\begin{coro}Let $\Gamma\in\gp$ be a graph with $n$ vertices. Then, $1-e^{\pm2\pi i
\frac{1}{n}}$ is an eigenvalue of $\D(\Gamma)$ iff $\Gamma$ is a
directed cycle.
\end{coro}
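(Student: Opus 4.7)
The plan is to apply Theorem~\ref{D28} with $k=n$ and then observe that having $n$ nonempty parts in an $n$-vertex graph forces the partition to be a labeling of the vertices.

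First I would note that for $\Gamma\in\gp$, the radius $r$ defined in \rf{D66} equals $1$ whenever $V_R\neq\emptyset$, since for nonnegative weights $r(i)=\frac{\sum_j w_{ij}}{d_i^{\mathrm{in}}}=1$ for every $i$ with $d_i^{\mathrm{in}}\neq 0$. Hence $1-re^{\pm 2\pi i/n}=1-e^{\pm 2\pi i/n}$, and Theorem~\ref{D28} reduces the corollary to the statement: $\Gamma$ contains an $n$-partite isolated maximal strongly connected component if and only if $\Gamma$ is a directed cycle.

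For the forward direction, suppose $\Gamma_p$ is such a component with $n$-partite decomposition $V_p=V_{p,1}\cup\cdots\cup V_{p,n}$. Each $V_{p,q}$ is nonempty, so $|V_p|\geq n=|V|$, forcing $\Gamma_p=\Gamma$ and $|V_{p,q}|=1$ for every $q$. Write $V_{p,q}=\{v_q\}$. Since $\Gamma\in\gp$ has no negative weights, the clause in the definition of $n$-partite producing weights with $w_{ij}/d^{\mathrm{in}}_i<0$ is vacuous, so the only admissible edges are those from $v_{q-1}$ to $v_q$ (indices mod $n$). Strong connectedness of $\Gamma_p$ forces every such edge $v_{q-1}\to v_q$ to actually be present, and this is precisely the directed cycle on $n$ vertices.

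For the converse, if $\Gamma$ is a directed cycle $v_1\to v_2\to\cdots\to v_n\to v_1$ with $n\geq 2$, take $V_q=\{v_q\}$; then $\Gamma$ is trivially strongly connected, is isolated as a subgraph of itself, has $r(i)=1$ for every vertex (hence is maximal), and its edges $v_{q-1}\to v_q$ match the $n$-partite definition with positive weights. Theorem~\ref{D28} then delivers the eigenvalues $1-e^{\pm 2\pi i/n}$. The only subtlety to watch is confirming that the even-$k$ clause in the definition of $k$-partite never kicks in here because $\gp$ excludes negative weights; no serious computation is required beyond unpacking the two definitions.
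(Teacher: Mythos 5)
Your proof is correct and follows exactly the route the paper intends: the corollary is stated there as a direct specialization of Theorem~\ref{D28} with $k=n$ and $r=1$, and your counting argument (that $n$ nonempty parts in an $n$-vertex graph force singleton parts, whence strong connectedness and the absence of negative weights force precisely the cycle edges) correctly fills in the details the paper leaves implicit.
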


\begin{defi}
The associated positive graph $\Gamma^+\in \gp$ of a graph
$\Gamma\in \g$ is obtained from $\Gamma$ by replacing every weight
$w_{ij}$ by its absolute value $|w_{ij}|$. The eigenvalues of
$\Gamma^+$ are denoted by $\lambda^+_0,\ldots \lambda_{n-1}^+$and
the Laplace operator defined on the graph $\Gamma^+$ is denoted by
$\D^+$.
\end{defi}
Clearly, a graph $\Gamma \in \gp$ with nonnegative weights
coincides with its associated positive graph, i.~e.~$\Gamma =
\Gamma^+$.
\begin{rem}
It is also possible to define the associated negative graph
$\Gamma^-$ of a graph $\Gamma$ that is obtained from $\Gamma$ by
replacing every weight $w_{ij}$ by $-|w_{ij}|$. Note however, that
by Proposition \ref{DBasic} $(v)$ the graphs $\Gamma^-$ and
$\Gamma^+$ are isospectral. Thus, we will only consider $\Gamma^+$
in the following.
\end{rem}
\begin{theo}\label{D27}
Let $\Gamma \in\g$ be a $k$-partite graph  and $r(j) = r$ for all
$j \in V$. Then, the spectra of $\D^+$ and $\D$ satisfy the
following relation: $\lambda^+\in \mathrm{spec}(\D^+)$ iff
$1-re^{\pm 2\pi i \frac{1}{k}}(1-\lambda^+)\in \mathrm{spec}(\D)$.
\end{theo}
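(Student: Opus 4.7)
The proof is via a phase-twist conjugation between the eigenfunctions of $P^+$ and $P$. Working with $P = I - \D$ and $P^+ = I - \D^+$, the statement is equivalent to: $\mu^+ := 1-\lambda^+$ lies in $\mathrm{spec}(P^+)$ iff $re^{\pm 2\pi i/k}\mu^+$ lies in $\mathrm{spec}(P)$.

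First, by Proposition \ref{DBasic}$(iv)$, multiplying the row $(w_{ij})_{j\in V}$ by $\mathrm{sgn}(d_i^{\mathrm{in}})$ for each $i$ leaves both $\mathrm{spec}(\D)$ and $\mathrm{spec}(\D^+)$ unchanged, preserves the absolute values $|w_{ij}|$ (so $\Gamma^+$ is unchanged), and preserves the signs of the normalized weights $w_{ij}/d_i^{\mathrm{in}}$ (so the $k$-partite structure is unchanged). I may therefore assume $d_i^{\mathrm{in}} > 0$ for every $i \in V$, which then gives $\sum_j |w_{ij}| = r(i) d_i^{\mathrm{in}} = rd_i^{\mathrm{in}}$.

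Next, using the $k$-partite decomposition $V = V_1\cup\cdots\cup V_k$, define the phase function $\phi \in C(V)$ by $\phi(j) = e^{2\pi i q/k}$ for $j \in V_q$. For $i \in V_q$, the definition of $k$-partite says the only nonzero $w_{ij}$ occur for $j\in V_{q-1}$ (with $w_{ij} > 0$) and, if $k$ is even, for $j \in V_{q+l}$ with $l = k/2-1$ (where $w_{ij} < 0$). Since $\phi(j)/\phi(i)$ equals $e^{-2\pi i/k}$ in the first case and $e^{2\pi i l/k} = -e^{-2\pi i/k}$ in the second, a direct check gives the identity
\[
w_{ij}\phi(j) = |w_{ij}|\,e^{-2\pi i/k}\phi(i)
\]
in every case where $w_{ij}\neq 0$. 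Hence, for any $u \in C(V)$ and any $i \in V$,
\[
P(\phi u)(i) = \frac{1}{d_i^{\mathrm{in}}}\sum_j w_{ij}\phi(j)u(j) = \frac{e^{-2\pi i/k}\phi(i)}{d_i^{\mathrm{in}}} \sum_j |w_{ij}| u(j) = r e^{-2\pi i/k}\phi(i)\,P^+u(i),
\]
where the last equality uses $\sum_j|w_{ij}| = rd_i^{\mathrm{in}}$ together with $P^+u(i) = \frac{1}{rd_i^{\mathrm{in}}}\sum_j|w_{ij}|u(j)$.

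Since $u \mapsto \phi u$ is a bijection on $C(V)$, the displayed intertwining shows $P^+u = \mu^+u$ iff $P(\phi u) = re^{-2\pi i/k}\mu^+ (\phi u)$; translating via $\mu = 1-\lambda$ gives exactly $\lambda^+ \in \mathrm{spec}(\D^+)$ iff $1-re^{-2\pi i/k}(1-\lambda^+) \in \mathrm{spec}(\D)$. The other sign $+2\pi i/k$ follows immediately by using $\overline{\phi}$ instead of $\phi$, or by invoking Proposition \ref{DBasic}$(ii)$. The only delicate step is the sign-tracking identity $w_{ij}\phi(j)=|w_{ij}|e^{-2\pi i/k}\phi(i)$; the remainder is the row-scaling reduction plus substitution.
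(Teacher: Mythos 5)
Your proof is correct and takes essentially the same route as the paper's: the paper verifies directly that $v=\phi u$ (with the same phase function $\phi(j)=e^{2\pi i q/k}$ on $V_q$) is an eigenfunction of $\D$ for the eigenvalue $1-re^{-2\pi i/k}(1-\lambda^+)$, which is exactly your intertwining identity evaluated on an eigenfunction of $\D^+$. Packaging the computation as the similarity $P=re^{-2\pi i/k}M_\phi P^+M_\phi^{-1}$, after your clean reduction to $d_i^{\mathrm{in}}>0$, is a slightly tidier way to obtain both directions of the equivalence at once, but the underlying idea is identical.
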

\begin{proof}Let the function $u$ satisfy $\D^+u = \lambda^+u$. We
define a new function $v$ in the following way:
\begin{equation} v(j) = \left\{
\begin{array}{r c l} e^{2\pi i \frac{1}{k}} u(j) & \quad if \quad & j \in V_1 \\
 e^{2\pi i \frac{2}{k}}u(j) &\quad if \quad & j \in
V_2\\ \vdots &&\\ e^{2\pi i \frac{k}{k}} u(j) &\quad if \quad & j
\in V_k,
\end{array} \right.
\end{equation} where $V_1,\ldots, V_k$ is a $k$-partite decomposition
of $V$.  We show that $v$ is an eigenfunction for $\D$ and the
corresponding eigenvalue is given by $(1 - r e^{-2\pi i
\frac{1}{k}}(1-\lambda^+))$. For any $j \in V_q$ and $1\leq q\leq
k$, we have

\begin{eqnarray*}\D v(j) &=& v(j) - \frac{1}{d_j^\mathrm{in}}\sum_{t\in
V_{q-1}}w_{jt}v(t) - \frac{1}{d_j^\mathrm{in}}\sum_{t\in
V_{q+l}}w_{jt}v(t)
\\ &=&e^{2\pi i \frac{q}{k}}u(j) - \frac{1}{|d_j^\mathrm{in}|} \sum_{t
\in V_{q-1}} |w_{jt}| e^{2\pi i \frac{q-1}{k}} u(t) +
\frac{1}{|d_j^\mathrm{in}|} \sum_{t \in V_{q+l}} |w_{jt}|
e^{\pi i}e^{2\pi i \frac{q-1}{k}}u(t)\\
&=& e^{2\pi i \frac{q}{k}}u(j) -
\frac{1}{|d^{\mathrm{in}}_j|}\sum_{t\in V} |w_{jt}|
e^{2\pi i \frac{q-1}{k}}u(t)  \\
&=& e^{2\pi i \frac{q}{k}}u(j) - re^{2\pi i \frac{q-1}{k}}u(j) +
re^{2\pi i \frac{q-1}{k}}\underbrace{\left( u(j) -
\frac{1}{\sum_{t\in V} |w_{jt}|}\sum_{t\in
V}|w_{jt}|u(t)\right)}_{= \D^+u(j) = \lambda^+u(j)}
\\ &=& (1 - r e^{-2\pi i \frac{1}{k}}(1-\lambda^+))v(j).
\end{eqnarray*}
Since the edge weights are real, $\D$ can be represented as a real
matrix and hence $\overline{v}$ is an eigenfunction for the
eigenvalue $1 - r e^{2\pi i \frac{1}{k}}(1-\lambda^+))$.

The other direction follows in a similar way. To be more precise,
for an eigenfunction $v$ of $\D$  we define the function $u$ by
\begin{equation} u(j) = \left\{
\begin{array}{r c l} e^{2\pi i \frac{1}{k}} v(j) & \quad if \quad & j \in V_1 \\
 e^{2\pi i \frac{2}{k}}v(j) &\quad if \quad & j \in
V_2\\ \vdots &&\\ e^{2\pi i \frac{k}{k}} v(j) &\quad if \quad &
j\in V_k.
\end{array} \right.
\end{equation}As above, one can show that $u$ is an
eigenfunction for $\D^+$ and corresponding eigenvalue
$1-\frac{1}{r}e^{-2\pi i \frac{1}{k}}(1-\lambda)$.
\end{proof}
Note that in Theorem \ref{D27} we do not assume that $\Gamma$ is
strongly connected. However, if we assume in addition that
$\Gamma$ is strongly connected, then we have the following result:
\begin{coro}
Let  $\Gamma \in\g$ be a strongly connected graph, and $r(j) = r$
for all $j\in V$. Then, $\Gamma$ is $k$-partite if and only if the
spectra of $\D^+$ and $\D$ satisfy the following: $\lambda^+$ is
an eigenvalue of $\D^+$ iff  $1-re^{\pm 2\pi i
\frac{1}{k}}(1-\lambda^+)$ is an eigenvalue of $\D$.
\end{coro}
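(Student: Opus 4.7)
The plan is to derive both implications from results already in hand: the forward direction is essentially a restatement of Theorem \ref{D27}, and the reverse direction is extracted by plugging a single known eigenvalue into the spectral correspondence and then invoking Theorem \ref{D28}.

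For the ``only if'' direction, I would simply observe that the hypothesis of Theorem \ref{D27}, namely that $\Gamma\in\g$ is $k$-partite and $r(j)=r$ for all $j\in V$, is exactly what is assumed here, so the equivalence $\lambda^+\in\mathrm{spec}(\D^+)\Leftrightarrow 1-re^{\pm 2\pi i/k}(1-\lambda^+)\in\mathrm{spec}(\D)$ is immediate. Strong connectedness of $\Gamma$ is not needed for this half.

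For the ``if'' direction, the plan is to feed the distinguished eigenvalue $\lambda^+=0$ into the assumed correspondence. By Proposition \ref{DBasic}$(i)$, $0$ is always an eigenvalue of $\D^+$ (with the constant function as eigenfunction). Hence the hypothesis forces $1-re^{\pm 2\pi i/k}\cdot 1 = 1-re^{\pm 2\pi i/k}$ to lie in $\mathrm{spec}(\D)$. Now I would invoke Theorem \ref{D28}: this presence of $1-re^{\pm 2\pi i/k}$ in the spectrum is equivalent to $\Gamma$ containing a $k$-partite isolated maximal strongly connected component $\Gamma_p$. Because $\Gamma$ is assumed strongly connected, its only strongly connected component is $\Gamma$ itself, so $\Gamma_p=\Gamma$; isolation then holds vacuously (there are no vertices outside $\Gamma$) and maximality holds because $r(j)=r$ for every $j\in V$. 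Thus $\Gamma$ itself is $k$-partite, completing the proof.

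The only thing to double-check is that Theorem \ref{D28} really applies in the form I want, but since the strong connectedness reduces everything to the unique strongly connected component of $\Gamma$, there is no real obstacle; the argument is essentially a clean specialization of the two preceding results. No estimate or eigenfunction computation has to be redone because all the work already sits in Theorem \ref{D27} (forward) and Theorem \ref{D28} (reverse).
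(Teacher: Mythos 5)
Your proposal is correct and matches the paper's own proof: the forward direction is a direct citation of Theorem \ref{D27}, and the reverse direction substitutes the eigenvalue $\lambda^+=0$ of $\D^+$ to place $1-re^{\pm 2\pi i\frac{1}{k}}$ in $\mathrm{spec}(\D)$ and then applies Theorem \ref{D28} together with strong connectedness to conclude the whole graph is $k$-partite. No issues.
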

\begin{proof}
One direction follows from Theorem \ref{D27}. The other direction
follows from the observation that zero is an eigenvalue of $\D^+$
and thus $1-re^{\pm 2\pi i \frac{1}{k}}$ is an eigenvalue of $\D$.
Since $\Gamma$ is strongly connected, it follows from Theorem
\ref{D28} that the whole graph is $k$-partite.
\end{proof}
Moreover, a $k$-partite graph has the following eigenvalues:
\begin{satz}\label{D31}
Let $\Gamma\in \g$ be a $k$-partite graph and $r(l) = r$ for all
$l \in V$. Then, $1-re^{2 \pi i \frac{m}{k}} \in
\mathrm{spec}(\D)$ for $1 \leq m \leq k-1$ and $m$ odd. If, in
addition, $\frac{w_{jt}}{d^{\mathrm{in}}_j}>0$ for all $j,t\in V$,
then $1-e^{2 \pi i \frac{m}{k}} \in \mathrm{spec}(\D)$ for all
$0\leq m\leq k-1$.
\end{satz}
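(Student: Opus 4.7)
The plan is to exhibit explicit eigenfunctions for each claimed eigenvalue, generalizing directly the construction used in the proof of Theorem \ref{D28} (which is essentially the $m=1$ case). For each integer $m$ with $1\le m\le k-1$, I would define
\[
v_m(j) = e^{2\pi i m q/k}\quad\text{for } j\in V_q,
\]
where $V_1,\dots,V_k$ is the $k$-partite decomposition of $V$. The claim to verify is that when $m$ is odd, $v_m$ is an eigenfunction of $P=I-\D$ with eigenvalue $re^{-2\pi i m/k}$, hence $1-re^{-2\pi i m/k}\in\mathrm{spec}(\D)$; the companion eigenvalue $1-re^{2\pi i m/k}$ then follows from Proposition \ref{DBasic}(ii).

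For the main computation, I would fix $j\in V_q$ and split the sum defining $Pv_m(j)$ into the contribution from $V_{q-1}$ (on which the relative weights $w_{jt}/d_j^{\mathrm{in}}$ are positive) and from $V_{q+l}$ with $l=\tfrac{k}{2}-1$ (on which they are negative). Since $v_m(t)/v_m(j)$ equals $e^{-2\pi i m/k}$ or $e^{2\pi i m l/k}$ respectively, I would factor out $e^{-2\pi i m/k}$ and use the key identity
\[
e^{2\pi i m l/k}\cdot e^{2\pi i m/k} = e^{\pi i m}=(-1)^m.
\]
When $m$ is odd this equals $-1$, exactly cancelling the negative sign of the weights in $V_{q+l}$, so the two sums combine into $\sum_{t}|w_{jt}|/|d_j^{\mathrm{in}}|=r(j)=r$. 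This yields $Pv_m(j)=re^{-2\pi i m/k}v_m(j)$, and I note that the hypothesis $r(l)=r$ for all $l$ is essential here to make the eigenvalue independent of $j$. For even $m$ the identity would produce $+1$ instead, so the two sums would fail to recombine into $r(j)$, which explains why the first statement is restricted to odd $m$.

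For the second assertion, the hypothesis $w_{jt}/d_j^{\mathrm{in}}>0$ for all $j,t$ forces the $k$-partite structure to consist only of the $V_{q-1}\to V_q$ edges (and in particular $r=1$). Then the same $v_m$ produces only the first of the two sums above, and no sign cancellation is needed; one obtains $Pv_m=e^{-2\pi i m/k}v_m$ for every integer $m$, giving $1-e^{-2\pi i m/k}\in\mathrm{spec}(\D)$ for all $0\le m\le k-1$. Complex conjugation again supplies $1-e^{2\pi i m/k}$ for every such $m$.

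There is no real obstacle here beyond careful bookkeeping, since the argument parallels the eigenfunction verification already carried out for Theorem \ref{D28}. The only conceptual point worth highlighting is the identity $e^{2\pi i m l/k}=(-1)^m e^{-2\pi i m/k}$, which is precisely the reason for the choice $l=\tfrac{k}{2}-1$ in the definition of $k$-partite graphs and which explains why odd and even $m$ behave differently.
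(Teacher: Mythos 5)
Your proposal is correct and takes essentially the same route as the paper: the paper uses the test functions $u^m(j)=e^{2\pi i\,m(k-q+1)/k}$ for $j\in V_q$ together with the identity $e^{-2\pi i\,ml/k}=e^{2\pi i\,m/k}e^{-\pi i m}$, which is exactly your computation with the conjugate phase assignment, so it reads off $re^{2\pi i\,m/k}$ directly instead of passing through Proposition~\ref{DBasic}(ii). The only detail the paper adds that you omit is the (inessential for this statement) remark that the functions $u^m$ are linearly independent for $k>2$.
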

\begin{proof}
In order to prove that $1-re^{2 \pi i \frac{m}{k}}$ is an
eigenvalue of $\D$, it is sufficient to show that $re^{2 \pi i
\frac{m}{k}}$ is an eigenvalue of $P$. Consider the functions
\begin{equation} u^m(j) = \left\{
\begin{array}{r c l} e^{2\pi i \frac{mk}{k}}  & \quad if \quad & j \in V_1 \\
 e^{2\pi i \frac{m(k-1)}{k}} &\quad if \quad & j \in
V_2\\ \vdots &&\\ e^{2\pi i \frac{m1}{k}} &\quad if \quad & j \in
V_k,
\end{array} \right.
\end{equation}for  $m = 0,1,\ldots k-1$.
One easily checks that these functions are linearly independent if
$k>2$.

For all $j\in V_q$, $q=1,\ldots,k$,  and $0\leq m\leq k-1$ we have
\begin{eqnarray}
Pu^m(j)&=& \nonumber \frac{1}{d_j^\mathrm{in}} \sum_{t \in
V_{q-1}}w_{jt}u^m(t) + \frac{1}{d_j^\mathrm{in}} \sum_{t \in
V_{q+l}}w_{jt}u^m(t)\\&=& \nonumber \frac{1}{|d^{\mathrm{in}}_j|}
\sum_{t \in V_{q-1}}|w_{jt}|e^{2\pi i \frac{m}{k}}u^m(j) -
\frac{1}{|d^{\mathrm{in}}_j|}
\sum_{t \in V_{q+l}}|w_{jt}|e^{-2\pi i \frac{ml}{k}}u^m(j)\\
&=&\label{D25}
 \frac{1}{|d^{\mathrm{in}}_j|} \sum_{t \in V_{q-1}}|w_{jt}|e^{2\pi i
\frac{m}{k}}u^m(j) - \frac{1}{|d^{\mathrm{in}}_j|} \sum_{t \in
V_{q+l}}|w_{jt}|e^{2\pi i \frac{m}{k}}e^{-\pi i m}u^m(j).
\end{eqnarray} If $m$ is odd, then $e^{-\pi i m}=-1$ and thus
\begin{eqnarray*}Pu^m(j)&=& \frac{1}{|d^{\mathrm{in}}_j|}\sum_{t\in V_{q-1}\cup
V_{q+l}}|w_{jt}|e^{2\pi i
\frac{m}{k}}u^m(j)\\&=&\frac{1}{|d^{\mathrm{in}}_j|}\sum_{t\in
V}|w_{jt}|e^{2\pi i \frac{m}{k}}u^m(j) \\&=& r e^{2\pi i
\frac{m}{k}}u^m(j).\end{eqnarray*} Hence, $1-re^{2\pi i
\frac{m}{k}}$ for $1\leq m\leq k-1$ and $m$ odd is an eigenvalue
of $\D$.

If in addition $\frac{w_{jt}}{d^{\mathrm{in}}_j}>0$ for all $j$
and $t$ in $V$, then $r =1$ and there are only edges from vertices
in $V_{q-1}$ to vertices in $V_q$. Thus, the second term on the
r.h.s. of \rf{D25} vanishes and we can conclude that
\begin{eqnarray*}Pu^m(j)&=& \frac{1}{|d^{\mathrm{in}}_j|}\sum_{t\in
V_{q-1}}|w_{jt}|e^{2\pi i
\frac{m}{k}}u^m(j)=\frac{1}{|d^{\mathrm{in}}_j|}\sum_{t\in
V}|w_{jt}|e^{2\pi i \frac{m}{k}}u^m(j) \\&=& r e^{2\pi i
\frac{m}{k}}u^m(j)= e^{2\pi i \frac{m}{k}}u^m(j).\end{eqnarray*}
This shows that $1-e^{2\pi i \frac{m}{k}}$ for all $m = 0, \ldots,
k-1$ is an eigenvalue of $\D$.
\end{proof}
\subsection{Anti-$k$-partite graphs}
In this section, we study graphs that are closely related to
$k$-partite graphs. We call those graphs anti-$k$-partite graphs
since they have the same topological structure as $k$-partite
graphs but compared to $k$-partite graphs, the normalized weights
$\frac{w_{ij}}{d^{\mathrm{in}}_i}$ in anti-$k$-partite graphs have
always the opposite sign.

\begin{defi}\label{D81}
$\Gamma \in \g$ is anti-$k$-partite, for $k\geq 2$ and $k$ even,
if $d^{\mathrm{in}}_i \neq 0$ for all $i\in V$ and the vertex set
$V$ consists of $k$ nonempty subsets $V_1,\ldots, V_k$ such that
the following holds: There are only edges from vertices  $j\in
V_{q-1}$ to vertices $i\in V_{q}$ if
$\frac{w_{ij}}{d^{\mathrm{in}}_i}<0$ or from vertices $j\in
V_{q+l}$ to vertices $i\in V_q$ if
$\frac{w_{ij}}{d^{\mathrm{in}}_i}>0$ where $l=\frac{k}{2}-1\in \N$
and we identify $V_{k+1}$ with $V_1$.
\end{defi}
In contrast to $k$-partite graphs, anti-$k$-partite graphs can
only be defined if $k$ is even. This follows from the observation
that every vertex $i$ has at least one neighbor $j$ such that
$\frac{w_{ij}}{d^{\mathrm{in}}_i}>0$. Hence, every vertex $i\in
V_q$ has at least one neighbor in $V_{q+l}$ for $q =1,\ldots,k$.
Since we require that $l=\frac{k}{2}-1\in \N$, it follows that $k$
has to be even.

We mention the following simple observation without proof:
\begin{satz}\label{D47}
Let $\Gamma\in\gp$ be an anti-$k$-partite graph and $k=2+4^m$,
where $m=0,1,\ldots$. Then, $\Gamma$ is disconnected and if $m\geq
1$ $\Gamma$ consists of two $\frac{k}{2}$-partite connected
components.
\end{satz}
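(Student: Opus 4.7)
The plan is to exploit the observation that $\Gamma\in\gp$ collapses Definition~\ref{D81} down to a single cyclic edge pattern on the blocks, and then reduce the whole statement to a short calculation of $\gcd(k,k/2-1)$ on $\Z/k\Z$.

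First I would note that since all weights are nonnegative, the clause in Definition~\ref{D81} requiring $\frac{w_{ij}}{d^{\mathrm{in}}_i}<0$ is vacuous, so every edge of $\Gamma$ goes from some $V_{q+l}$ to $V_q$ with $l=\frac{k}{2}-1$ (indices mod $k$). Combined with the hypothesis $d^{\mathrm{in}}_i\neq 0$, at least one such edge exists for each $q$. Contracting each $V_i$ to a super-vertex therefore yields a digraph on $\Z/k\Z$ with the single shift rule $s\mapsto s-l$, and the weakly connected components of $\Gamma$ are in natural bijection with the orbits of this shift (acting as a $\Z$-action on $\Z/k\Z$), of which there are exactly $\gcd(k,l)$.

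Next I would carry out the arithmetic for $k=2+4^m$, $m\geq 1$: then $l=2^{2m-1}$ is even while $k/2=1+2\cdot 4^{m-1}$ is odd, so $\gcd(k/2,l)=1$ and hence $\gcd(k,l)=\gcd(2,l)=2$. This proves disconnectedness with exactly two weakly connected components (the case $m=0$ gives $k=3$, which is odd and excluded by Definition~\ref{D81}, so it is vacuous).

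Finally, for the structural assertion I would fix one orbit and relabel its blocks along the shift as $W_q:=V_{i_0-(q-1)l}$ for $q=1,\ldots,k/2$. The edges in this component then go from $W_{q-1}$ to $W_q$ cyclically and nowhere else, which is precisely the definition of a $(k/2)$-partite graph with only the positive-weight branch active---consistent with $k/2$ being odd, which forbids the $l'=(k/2)/2-1\in\N$ branch in the definition of $k$-partite graphs. The main obstacle I anticipate is purely bookkeeping: keeping the cyclic relabeling and the direction of the shift consistent with the indexing convention in the definition of $k$-partite graphs, so that the positive-weight edge pattern is identified correctly and no parity mismatch is introduced.
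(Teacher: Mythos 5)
The paper states this proposition explicitly without proof (``We mention the following simple observation without proof''), so there is no author argument to compare against; your block-contraction approach is surely the intended one, and your gcd computation is correct: for $m\geq 1$ one has $l=\tfrac{k}{2}-1=2^{2m-1}$ even and $\tfrac{k}{2}=1+2^{2m-1}$ odd, so $\gcd(k,l)=2$ and each orbit of the shift has $k/2$ blocks. Your identification of each orbit-piece as $\tfrac{k}{2}$-partite is also fine.

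There is, however, one genuine overclaim: the weakly connected components of $\Gamma$ are \emph{not} in natural bijection with the orbits of the shift on $\mathbb{Z}/k\mathbb{Z}$; contraction only gives a surjection from components onto orbits. Concretely, take $k=6$, $l=2$, split $V_1,V_3,V_5$ each into two singletons and wire the two triples as disjoint directed $3$-cycles of blocks: every vertex has nonzero in-degree, all edges go from $V_{q+2}$ to $V_q$, so the graph is anti-$6$-partite in $\mathbb{G}^+$, yet it has more than two weak components. So what your argument actually establishes (and all that is true, and all that the paper uses later, e.g.\ in Lemma \ref{D30} and Theorem \ref{D29}) is that $V$ splits into two mutually isolated pieces, each $\tfrac{k}{2}$-partite; you should state the conclusion in that form rather than assert connectivity of each piece. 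Separately, your dismissal of $m=0$ as vacuous (reading $k=2+4^0=3$) conflicts with how the proposition is invoked in the proof of Lemma \ref{D30}, which needs the anti-bipartite case $k=2$; the exponent is evidently a typo for $k\equiv 2\pmod 4$, and you should cover $k=2$ explicitly: there $l=0$, every edge stays inside its own block $V_q$, so there are no edges between $V_1$ and $V_2$ and the graph is disconnected.
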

\begin{theo}\label{D29} Let
$\Gamma\in\g$ contain an anti-$k$-partite maximal, isolated,
strongly connected component, then  $1+re^{\pm2\pi i
\frac{1}{k}}\in\mathrm{spec}(\D)$. Furthermore, if $1+re^{\pm2\pi
i \frac{1}{k}}\in\mathrm{spec}(\D)$ and one of the following two
conditions is satisfied \begin{itemize}\item[(i)] $k=4^m$ for
$m=1,2,\ldots$ \item[(ii)] $k=2+4^m$ for $m=0,1,\ldots$ and $r>1$,
\end{itemize} then $\Gamma$ contains an anti-$k$-partite isolated
maximal strongly connected component.
\end{theo}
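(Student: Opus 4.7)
The plan is to mirror the proof of Theorem~\ref{D28} almost verbatim, with the sign of each edge weight reversed to reflect the opposite convention in Definition~\ref{D81}.

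For the forward direction, assume $\Gamma$ contains an anti-$k$-partite maximal, isolated, strongly connected component $\Gamma_p$ with decomposition $V_{p,1},\ldots,V_{p,k}$. Take the same trial eigenfunction
\[
u^1(j) = e^{2\pi i (k-q+1)/k} \quad\text{if } j\in V_{p,q}
\]
used in the proof of Theorem~\ref{D28}. For $j\in V_{p,q}$ the predecessors of $j$ in $V_p$ lie in $V_{p,q-1}$ (where $w_{jt}/d_j^{\mathrm{in}}<0$) and in $V_{p,q+l}$, $l=k/2-1$ (where $w_{jt}/d_j^{\mathrm{in}}>0$), and a direct computation using $u^1(t)/u^1(j)=e^{2\pi i/k}$ in the first case and $u^1(t)/u^1(j)=-e^{2\pi i/k}$ in the second, together with maximality and isolation, yields $P_p u^1(j)=-r\,e^{2\pi i/k}u^1(j)$, i.e.\ $\Delta_p u^1=(1+r\,e^{2\pi i/k})u^1$. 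Proposition~\ref{D9} and complex conjugation (Proposition~\ref{DBasic}\,(ii)) then promote both $1+r\,e^{\pm 2\pi i/k}$ to eigenvalues of $\Delta$.

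For the converse, set $\mu=-r\,e^{2\pi i/k}$, so that $|\mu|=r$ and $\mu\neq 1$ (in case~(i) we have $k\geq 4$, and in case~(ii) we have $r>1$). By Proposition~\ref{D18} and Lemma~\ref{D58}, $\mu$ is an eigenvalue of the Dirichlet operator $P_p$ on some maximal, isolated, strongly connected component $\Gamma_p$ with $|V_p|\geq 2$, and the corresponding eigenfunction $u$ has constant modulus on $V_p$. The equality case in the triangle inequality applied to $P_pu(i)=\mu u(i)$, combined with $r(i)=r$, forces every nonzero summand $\tfrac{w_{it}}{d_i^{\mathrm{in}}}\,u(t)/u(i)$ to be a positive real multiple of $-e^{2\pi i/k}$, so
\[
\frac{u(t)}{u(i)} \;=\; \begin{cases} -e^{2\pi i/k}, & w_{it}/d_i^{\mathrm{in}}>0, \\ e^{2\pi i/k}, & w_{it}/d_i^{\mathrm{in}}<0. \end{cases}
\]
Writing $u(j)=c\,e^{i\theta_j}$ and grouping the vertices of $V_p$ by the value of $\theta_j$ produces a candidate partition $V_{p,1},\ldots,V_{p,k}$.

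What remains is to show this grouping really has $k$ nonempty classes with the adjacency structure of Definition~\ref{D81}. In case~(i), $k=4^m$, the multiplier $-e^{2\pi i/k}$ has order exactly $k$, so the arguments of $u$ realise $k$ distinct values and the partition automatically obeys the anti-$k$-partite edge-sign pattern. In case~(ii), $k\equiv 2\pmod 4$, the multiplier has order only $k/2$; here the assumption $r>1$ guarantees that every vertex carries both positive- and negative-signed incoming edges, which is used to refine each of the $k/2$ argument classes into two pieces, giving the full anti-$k$-partite decomposition. I expect this refinement to be the main obstacle: the eigenvalue computation is routine, whereas verifying that $r>1$ together with strong connectedness and maximality is exactly what is needed to upgrade a $(k/2)$-partite argument partition to a genuine anti-$k$-partite one is the real content.
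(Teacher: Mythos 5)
Your forward direction and your reduction of the converse are exactly the paper's route: mirror the computation of Theorem \ref{D28} with the trial function $u^1(j)=e^{2\pi i(k-q+1)/k}$ to get the eigenvalue $1+re^{\pm 2\pi i\frac1k}$, then use Proposition \ref{D18} and Lemma \ref{D58} together with the equality case of the triangle inequality to obtain the sign dichotomy $u(t)/u(i)=-e^{2\pi i\frac1k}$ (positive normalized edge) versus $u(t)/u(i)=e^{2\pi i\frac1k}$ (negative normalized edge) on a maximal, isolated, strongly connected component $\Gamma_p$. Up to this point the argument is sound and coincides with the paper.

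The gap is precisely the step you flag as ``the main obstacle'': in case (ii) you never actually exhibit $k$ nonempty classes, and the mechanism you sketch --- that $u$ has only $k/2$ argument classes which must each be ``refined into two pieces'' --- is not the right picture. The classes are defined by the exact value of $u$, and all values lie in $c\cdot\{e^{2\pi i s/k}:s=0,\dots,k-1\}$ (since $-1$ is a $k$-th root of unity for $k$ even); negative edges do not split classes, they populate the missing half of the orbit. The argument the paper uses is: maximality of $\Gamma_p$ gives $r(i)=r$ for \emph{every} $i\in V_p$, and $r(i)>1$ forces at least one in-edge at $i$ with $w_{it}/d_i^{\mathrm{in}}<0$ (note $r>1$ alone only gives mixed signs at a maximizing vertex, so maximality is genuinely used here, and isolation keeps these edges inside $V_p$). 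Along such an edge $u(t)=e^{2\pi i\frac1k}u(i)$, and $e^{2\pi i\frac1k}$ has order $k$ --- unlike the positive-edge multiplier $-e^{2\pi i\frac1k}$, whose order is only $k/2$ when $k\equiv 2\ (\mathrm{mod}\ 4)$. Following negative in-edges backwards from any vertex therefore realizes all $k$ values $c\,e^{2\pi i s/k}$, so the partition of $V_p$ by $u$-value already has $k$ nonempty classes and, by your sign dichotomy, exactly the adjacency pattern of Definition \ref{D81}; no refinement of a coarser partition occurs. A parallel remark is needed in the mixed-sign situation of case (i): every vertex always has at least one in-edge with $w_{it}/d_i^{\mathrm{in}}>0$ because the normalized in-weights sum to $1$, so one can follow positive in-edges, whose multiplier $-e^{2\pi i\frac1k}$ has order $k$ when $4\mid k$, to realize all $k$ values; your write-up asserts the $k$ distinct values without this observation.
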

\begin{proof}
Assume that $\Gamma$ contains an anti-$k$-partite maximal,
isolated, strongly connected component. In exactly the same way as
in Theorem \ref{D28} one can show that $1+re^{\pm2\pi i
\frac{1}{k}}$ is an eigenvalue of $\D$. We will omit the details
here. Now let $ 1+re^{\pm2\pi i \frac{1}{k}}$ be an eigenvalue of
$\D$. Note that $1+re^{\pm2\pi i \frac{1}{k}}\neq 0$  and thus, by
Proposition \ref{D18}, $\Gamma$ contains an maximal, isolated,
strongly connected component $\Gamma_p$. Furthermore, we have that
$1+re^{\pm2\pi i \frac{1}{k}}$ is an eigenvalue of $\D_p$. By a
reasoning similar to the one in the proof of Theorem \ref{D28} it
follows that the corresponding eigenfunction for $\D_p$ satisfies
\be \label{D46} u(j) = - e^{2\pi i \frac{1}{k}}u(t)\qe whenever
$\frac{w_{jt}}{d^{\mathrm{in}}_j}>0$ and \be \label{D59} u(j) =
e^{2\pi i \frac{1}{k}}u(t)\qe whenever
$\frac{w_{jt}}{d^{\mathrm{in}}_j}<0$. Now assume that $k=4^m$ and
$\frac{w_{tj}}{d^{\mathrm{in}}_t}>0$ for all $t,j\in V_p$.

Since $\Gamma_p$ is strongly connected, $k=4^m$, and neighbors
have to satisfy equation \rf{D46}, we can uniquely assign to every
vertex an $u$-value such that every $k$-th vertex in a directed
path has the same $u$-value.  Now decompose the vertex set into
$k$ non-empty subsets such that all vertices with the same
$u$-value belong to the same subset. This yields an
anti-$k$-partite decomposition of $\Gamma_p$.

If there also exists edges s.t.
$\frac{w_{jt}}{d^{\mathrm{in}}_j}<0$ is satisfied then, again, the
crucial observation is that if
$\frac{w_{jt}}{d^{\mathrm{in}}_j}<0$ for some $j$ and $t$ then
there also has to exist another neighbor $s$ of $j$ s.t.
$\frac{w_{js}}{d^{\mathrm{in}}_j}>0$. Thus, there exist $k$
different $u$-values. Similar to above, we can find an
anti-$k$-partite decomposition of $V_p$.

If $k=2+4^m$, $m=0,1,\ldots,$ the situation is different. If
$\frac{w_{jt}}{d^{\mathrm{in}}_j}>0$ for all $j$ and $t$, then
$r=1$. In this case, we cannot conclude that there exists an
anti-$k$-partite component since already every $\frac{k}{2}$-th
vertex in a directed path has the same $u$-value, i.e.
$(-1)^\frac{k}{2}(e^{2\pi i \frac{1}{k}})^\frac{k}{2} =1$ for
$k=2+4^m$, $m=0,1,\ldots$. Thus, we crucially need that $r>1$. In
this case, every vertex $i$ has at least one neighbor $j$ such
that $\frac{w_{ij}}{d^{\mathrm{in}}_i}<0$. By \rf{D59} it follows
that there has to exist $k$ different $u$-values. Thus, we can
obtain an anti-$k$-partite component of $\Gamma_p$ in the same way
as before.
\end{proof}

A simple example that shows that the assumption $r>1$ is necessary
if $k = 2+ 4^m$  in the last theorem. If $1-r=0$ is an eigenvalue
of $\Gamma$, then this does not imply that there exists a
$2$-partite isolated maximal strongly connected component in
$\Gamma$.

The next theorem shows that there also exists a relationship
between the spectrum of an anti-$k$-partite graph and its
associated positive graph.
\begin{theo}
Let $\Gamma \in\g$ be an  anti-$k$-partite graph and $r(l) = r$
for all $l \in V$. Then, $\lambda^+\in \mathrm{spec}(\D^+)$ iff
$1+re^{\pm 2\pi i \frac{1}{k}}(1-\lambda^+)\in \mathrm{spec}(\D)$.
\end{theo}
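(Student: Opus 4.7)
The plan is to follow the template of Theorem \ref{D27} verbatim, changing only the sign bookkeeping. The point of the definition of an anti-$k$-partite graph is that the sign of $w_{jt}/d_j^{\mathrm{in}}$ is flipped on each of the two types of neighbor blocks compared with a $k$-partite graph, so both contributions that partially cancelled there now reinforce each other, which is exactly what turns the $k$-partite formula $1-re^{\pm 2\pi i/k}(1-\lambda^+)$ into $1+re^{\pm 2\pi i/k}(1-\lambda^+)$.

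Concretely, given an eigenfunction $u$ of $\D^+$ with $\D^+u=\lambda^+u$, I would define the twisted function $v(j)=e^{2\pi i q/k}u(j)$ for $j\in V_q$, where $V_1,\dots,V_k$ is the anti-$k$-partite decomposition. For $j\in V_q$, split $\D v(j)=v(j)-\frac{1}{d_j^{\mathrm{in}}}\sum_t w_{jt}v(t)$ into the sum over $t\in V_{q-1}$ (where $w_{jt}/d_j^{\mathrm{in}}<0$ by Definition~\ref{D81}) and the sum over $t\in V_{q+l}$ with $l=k/2-1$ (where $w_{jt}/d_j^{\mathrm{in}}>0$). The first sum contributes
\[
-\frac{1}{d_j^{\mathrm{in}}}\sum_{t\in V_{q-1}}w_{jt}v(t)=\frac{1}{|d_j^{\mathrm{in}}|}\sum_{t\in V_{q-1}}|w_{jt}|e^{2\pi i(q-1)/k}u(t),
\]
and the second sum, using $e^{2\pi i(q+l)/k}=e^{\pi i}e^{2\pi i(q-1)/k}$ (which needs $k$ even, i.e.\ the very hypothesis packaged into Definition~\ref{D81}), contributes
\[
-\frac{1}{d_j^{\mathrm{in}}}\sum_{t\in V_{q+l}}w_{jt}v(t)=\frac{1}{|d_j^{\mathrm{in}}|}\sum_{t\in V_{q+l}}|w_{jt}|e^{2\pi i(q-1)/k}u(t),
\]
i.e.\ both with a $+$ sign, in contrast to the $k$-partite computation.

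Summing the two and invoking the hypothesis $r(j)=r$ to write $\sum_{t\in V}|w_{jt}|=r|d_j^{\mathrm{in}}|$, together with the eigenvalue relation $\D^+u=\lambda^+u$ in the form $\frac{1}{\sum_t|w_{jt}|}\sum_t|w_{jt}|u(t)=(1-\lambda^+)u(j)$, one obtains
\[
\D v(j)=e^{2\pi i q/k}u(j)+re^{-2\pi i/k}e^{2\pi i q/k}(1-\lambda^+)u(j)=\bigl(1+re^{-2\pi i/k}(1-\lambda^+)\bigr)v(j).
\]
Since $\D$ has a real matrix representation, $\overline v$ is then an eigenfunction for the complex conjugate eigenvalue $1+re^{2\pi i/k}(1-\lambda^+)$, giving the $\pm$ in the statement.

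For the converse direction, starting from an eigenfunction $v$ of $\D$ for $1+re^{\pm 2\pi i/k}(1-\lambda^+)$ I would define $u$ by the inverse twist $u(j)=e^{-2\pi i q/k}v(j)$ on $V_q$ and run the same sign analysis backwards to show $\D^+u=\lambda^+u$. I do not expect any real obstacle: the whole argument is bookkeeping, and the only subtle point is verifying the identity $e^{2\pi i(q+l)/k}=e^{\pi i}e^{2\pi i(q-1)/k}$ which is exactly where the parity of $k$ enters and is therefore the one place worth writing out carefully.
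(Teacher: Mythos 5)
Your proposal is correct and is exactly the argument the paper intends: the paper omits the proof with the remark that it is the same as that of Theorem \ref{D27}, and your adaptation (the same twisted eigenfunction $v(j)=e^{2\pi i q/k}u(j)$, with the two neighbor sums now reinforcing rather than cancelling because the signs of $w_{jt}/d_j^{\mathrm{in}}$ are reversed on both blocks) is precisely that omitted computation, including the correct use of $e^{2\pi i(q+l)/k}=e^{\pi i}e^{2\pi i(q-1)/k}$ and of $r(j)=r$.
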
We omit the proof of this theorem because it is the same
as the proof of Theorem \ref{D27}.

The next proposition is the corresponding result to Proposition
\ref{D31} in the case of anti-$k$-partite graphs.
\begin{satz}\label{D44}
Let $\Gamma\in \g$ be an anti-$k$-partite graph, s.t. $r(l) = r$
for all $l \in V$, then $1+re^{2 \pi i \frac{m}{k}} \in
\mathrm{spec}(\D)$ for $0 \leq m \leq k-1$, if  $m$ is odd. If in
addition $\frac{w_{ij}}{{d_i^\mathrm{in}}}>0$ for all $i,j \in V$
then $1-e^{2\pi i \frac{m}{k}}\in\mathrm{spec}(\D)$ for $0\leq
m\leq k-1$, $m$ even and $1+e^{2\pi i
\frac{m}{k}}\in\mathrm{spec}(\D)$ for $0\leq m\leq k-1$, $m$ odd.
\end{satz}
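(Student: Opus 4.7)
The plan is to mimic the proof of Proposition \ref{D31} almost verbatim, because the combinatorial data of an anti-$k$-partite graph is identical to that of a $k$-partite graph except that the signs of the normalized weights $w_{ij}/d_i^{\mathrm{in}}$ on the two types of edges are swapped. Consequently the same candidate eigenfunctions should work, only the extremal eigenvalue at which they live should be reflected across $1$.

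Concretely, I would work with $P=I-\D$ and, for each $m=0,1,\ldots,k-1$, set
\begin{equation*}
u^{m}(j)=e^{2\pi i \frac{m(k-q+1)}{k}}\quad\text{if }j\in V_{q},
\end{equation*}
exactly as in Proposition \ref{D31}. The key elementary identities are: for $t\in V_{q-1}$ we have $u^{m}(t)=e^{2\pi i m/k}u^{m}(j)$, while for $t\in V_{q+l}$ with $l=\tfrac{k}{2}-1$ we have $u^{m}(t)=e^{-2\pi i ml/k}u^{m}(j)=(-1)^{m}e^{2\pi i m/k}u^{m}(j)$. These two identities do not use anything about the graph beyond the partition, so they are available to us.

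For the first assertion, I would split $Pu^{m}(j)$ along the two partition classes of neighbors and use Definition~\ref{D81}: on edges $V_{q-1}\to V_{q}$ we have $w_{jt}/d_{j}^{\mathrm{in}}=-|w_{jt}|/|d_{j}^{\mathrm{in}}|$, and on edges $V_{q+l}\to V_{q}$ we have $w_{jt}/d_{j}^{\mathrm{in}}=|w_{jt}|/|d_{j}^{\mathrm{in}}|$. Substituting and using the two identities above gives, for any odd $m$,
\begin{equation*}
Pu^{m}(j)=-\frac{e^{2\pi i m/k}}{|d_{j}^{\mathrm{in}}|}\sum_{t\in V_{q-1}}|w_{jt}|\,u^{m}(j)+\frac{(-1)^{m}e^{2\pi i m/k}}{|d_{j}^{\mathrm{in}}|}\sum_{t\in V_{q+l}}|w_{jt}|\,u^{m}(j)=-re^{2\pi i m/k}u^{m}(j),
\end{equation*}
using $r(j)=r$ for every $j$. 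Since $\D=I-P$, this immediately produces $\D u^{m}=(1+re^{2\pi i m/k})u^{m}$, proving $1+re^{2\pi i m/k}\in\mathrm{spec}(\D)$ for odd $m$.

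For the second assertion, the extra hypothesis $w_{jt}/d_{j}^{\mathrm{in}}>0$ for all $j,t$ forces (by Definition~\ref{D81}) the vanishing of all $V_{q-1}\to V_{q}$ edges, so only the $V_{q+l}\to V_{q}$ sum survives, and automatically $r=1$. Running the same computation now yields $Pu^{m}=(-1)^{m}e^{2\pi i m/k}u^{m}$ without any parity restriction on $m$; translating back through $\D=I-P$ gives $1-e^{2\pi i m/k}$ for $m$ even and $1+e^{2\pi i m/k}$ for $m$ odd, as claimed. The only point requiring any real care is to keep the signs of $w_{ij}$, $d_{i}^{\mathrm{in}}$, and $(-1)^{m}$ straight in the two sums; this is exactly the bookkeeping that was the substance of Proposition \ref{D31}, and no new idea beyond it is needed, which is why the authors label the result as a ``corresponding'' proposition.
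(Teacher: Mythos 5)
Your proposal is correct: the sign bookkeeping for the two classes of in-neighbors, the identities $u^m(t)=e^{2\pi i m/k}u^m(j)$ for $t\in V_{q-1}$ and $u^m(t)=(-1)^m e^{2\pi i m/k}u^m(j)$ for $t\in V_{q+l}$, and the reduction to $r(j)=r$ (resp.\ $r=1$ in the all-positive case) all check out, yielding exactly the claimed eigenvalues via $\D=I-P$. This is precisely the argument the paper intends, since it states Proposition \ref{D44} as the anti-$k$-partite counterpart of Proposition \ref{D31} and omits the proof as a routine adaptation of that computation.
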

\begin{satz}Let $\Gamma\in\g$ be a strongly connected graph and
$r(l)=r$ for all $l\in V$. Assume that $k=4^m$, $m=1,2,\ldots$.
Then, $\Gamma$ is $k$-partite iff $\Gamma$ is anti-$k$-partite.
\end{satz}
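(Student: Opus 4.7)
My plan is to reduce both conditions to spectral conditions using Theorems \ref{D28} and \ref{D29}, and then to match up the two resulting eigenvalue conditions using the eigenvalue lists provided by Propositions \ref{D31} and \ref{D44}. Since $\Gamma$ is strongly connected, $\Gamma$ is itself the unique strongly connected component, trivially isolated, and, because $r(l)=r$ for all $l\in V$, maximal. Hence Theorem \ref{D28} makes ``$\Gamma$ is $k$-partite'' equivalent to $1-re^{\pm 2\pi i/k}\in\mathrm{spec}(\D)$, while Theorem \ref{D29}(i), applicable since $k=4^m$, makes ``$\Gamma$ is anti-$k$-partite'' equivalent to $1+re^{\pm 2\pi i/k}\in\mathrm{spec}(\D)$. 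It therefore suffices to show these two spectral conditions coincide.

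The arithmetic core of the argument is the parity observation that for $k=4^m$ with $m\geq 1$, $k/2=2\cdot 4^{m-1}$ is even, so $j_0:=1+k/2$ is an odd integer in $[1,k-1]$ and $e^{2\pi i j_0/k}=e^{2\pi i/k}\cdot e^{i\pi}=-e^{2\pi i/k}$. For the forward direction, I would assume $\Gamma$ is $k$-partite and invoke Proposition \ref{D31} to produce $1-re^{2\pi i j/k}\in\mathrm{spec}(\D)$ for every odd $j\in[1,k-1]$. Specializing to $j=j_0$ and using the identity above yields $1+re^{2\pi i/k}\in\mathrm{spec}(\D)$; complex conjugation (Proposition \ref{DBasic}(ii)) then provides $1+re^{-2\pi i/k}\in\mathrm{spec}(\D)$, and Theorem \ref{D29}(i) closes this direction.

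For the converse I would run the identical trick starting from Proposition \ref{D44}: if $\Gamma$ is anti-$k$-partite, then $1+re^{2\pi i j/k}\in\mathrm{spec}(\D)$ for every odd $j\in[0,k-1]$, and specializing to $j=j_0$ together with complex conjugation produces $1-re^{\pm 2\pi i/k}\in\mathrm{spec}(\D)$, whence Theorem \ref{D28} forces $\Gamma$ to be $k$-partite. The main (and essentially only) obstacle is the parity check: the hypothesis $k=4^m$ is precisely what makes $j_0$ odd, so that Propositions \ref{D31} and \ref{D44} both apply to the shifted index. In the complementary regime $k=2+4^n$ covered by Theorem \ref{D29}(ii), $k/2$ would be odd, $j_0$ even, and the conversion would break, consistent with Proposition \ref{D47} showing that anti-$(2+4^n)$-partite graphs need not be connected at all and hence cannot coincide with $k$-partite ones.
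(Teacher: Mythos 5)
Your proposal is correct and follows essentially the same route as the paper: both use Proposition \ref{D31} (resp.\ \ref{D44}) to extract the eigenvalue $1-re^{2\pi i(\frac{k}{2}+1)/k}=1+re^{2\pi i/k}$ (resp.\ its counterpart), relying on the parity fact that $\frac{k}{2}+1$ is odd exactly because $k=4^m$, and then close with Theorems \ref{D29} and \ref{D28}. Your explicit remark that strong connectedness makes $\Gamma$ its own isolated maximal component is a useful detail the paper leaves implicit.
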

\begin{proof} Assume that $\Gamma$ is $k$-partite.
By Proposition \ref{D31}, $1-re^{2\pi i \frac{l}{k}}\in
\mathrm{spec}(\D)$ for $0\leq l\leq k-1$ and $l$ odd. Since $k$ is
of the form $k=4^m$, $\frac{k}{2}+1$ is odd, and so we have
$1-re^{2\pi i \frac{\frac{k}{2}+1}{k}}= 1+re^{2\pi i
\frac{1}{k}}\in \mathrm{spec}(\D)$. From Theorem \ref{D29}, it
follows that $\Gamma$ is anti-$k$-partite. The other direction
follows in the same way by using Proposition \ref{D44} and Theorem
\ref{D28}.
\end{proof}
This proposition shows that if $k=4^m$, $m=1,2,\ldots$ then a
$k$-partite decomposition can be obtained from an anti-$k$-partite
one, and vice versa, by relabelling the vertex sets $V_k$.
\subsection{Special cases: Bipartite and anti-bipartite graphs}
\subsubsection{Bipartite graphs}
As a special case of $k$-partite graphs we obtain:
\begin{defi}\label{D34}
A graph $\Gamma \in\g$ is bipartite (or $2$-partite), if
$d^{\mathrm{in}}_i\neq 0$ for all $i\in V$ and the vertex set $V$
can be decomposed into two nonempty subsets $V_1,V_2$ such that
for neighbors $i$ and $j$ $\frac{w_{ij}}{d^{\mathrm{in}}_i}> 0$ if
$i$ and $j$ belong to different subsets and
$\frac{w_{ij}}{d^{\mathrm{in}}_i}< 0$ if $i$ and $j$ belong to the
same subset.
\end{defi}In the case of undirected graphs
with nonnegative weights, Definition \ref{D34} reduces to the
usual definition of a bipartite graph.
\begin{coro}  \label{D23} A graph $\Gamma\in \g$ contains a
maximal, isolated, bipartite strongly connected component if and
only if $1+r$ is an eigenvalue of $\D$.
\end{coro}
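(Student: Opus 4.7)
The plan is to derive this corollary as the special case $k=2$ of Theorem \ref{D28}. First I would observe that, by Definition \ref{D34}, a bipartite graph is precisely a $2$-partite graph in the sense of the general $k$-partite definition of Section 7.1, so the notion of a ``maximal, isolated, bipartite strongly connected component'' coincides with a maximal, isolated, $2$-partite strongly connected component.

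Next I would invoke Theorem \ref{D28} with $k=2$. The theorem states that $\Gamma$ contains a $k$-partite isolated maximal strongly connected component if and only if $1 - r e^{\pm 2\pi i /k}$ are eigenvalues of $\D$. Substituting $k=2$ gives
\[
1 - r e^{\pm 2\pi i / 2} = 1 - r e^{\pm \pi i} = 1 - r(-1) = 1 + r,
\]
so the two complex conjugate eigenvalues from Theorem \ref{D28} collapse into the single real eigenvalue $1+r$.

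Putting these two observations together, $\Gamma$ contains a maximal, isolated, bipartite strongly connected component if and only if $1+r \in \mathrm{spec}(\D)$, which is exactly the claim. There is no real obstacle here: the only thing to verify is that Theorem \ref{D28} is applicable for $k=2$ (which it is, since $k \geq 2$ in its statement and the parity condition $l = k/2 - 1 \in \N$ is satisfied trivially with $l=0$, allowing both positive and negative normalized weights), and that bipartiteness as defined in Definition \ref{D34} matches the specialization of the general $k$-partite definition. Both checks are immediate from the definitions.
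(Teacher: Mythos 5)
Your proposal is correct and is exactly how the paper obtains this corollary: it is stated as the $k=2$ specialization of Theorem \ref{D28}, with $1-re^{\pm\pi i}=1+r$ and Definition \ref{D34} matching the $2$-partite case (positive normalized weights between the two subsets, negative ones within a subset, i.e.\ $l=0$). Nothing further is needed.
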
Using Corollary \ref{D35} we can reformulate this as follows:
\begin{coro}The spectrum of $\D$ contains the largest possible real eigenvalue
if and only if the graph $\Gamma\in\g$ contains a maximal,
isolated, bipartite strongly connected component.
\end{coro}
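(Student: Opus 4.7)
The plan is to recognize that this corollary is a direct reformulation of Corollary~\ref{D23} once the phrase ``largest possible real eigenvalue'' is unpacked using the Gersgorin-type bound of Proposition~\ref{D16}. First I would observe that, by Proposition~\ref{D16}, every eigenvalue of $\D$ lies in $\mathcal{D}(1,r)\cup\{0\}$. Intersecting this set with the real line gives $[1-r,1+r]\cup\{0\}$, so the largest real number that could possibly occur as an eigenvalue of $\D$ is exactly $1+r$ (note $1+r\geq 1\geq 0$ whenever $V_R\neq\emptyset$, and the trivial case $V_R=\emptyset$ makes the statement vacuous since then $r=0$ and the graph is already a disjoint union of isolated vertices).

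Hence the hypothesis ``the spectrum of $\D$ contains the largest possible real eigenvalue'' is equivalent to the assertion $1+r\in\mathrm{spec}(\D)$. At this point I would invoke Corollary~\ref{D23} directly: it states that $1+r$ is an eigenvalue of $\D$ if and only if $\Gamma$ contains a maximal, isolated, bipartite strongly connected component. Chaining the two equivalences yields the desired statement.

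There is essentially no technical obstacle, since both the Gersgorin bound and Corollary~\ref{D23} are already established; the only interpretive point is to confirm that ``largest possible'' refers to the universal ceiling $1+r$ imposed by Proposition~\ref{D16} rather than a graph-dependent quantity. This is the correct reading because Corollary~\ref{D23} characterizes precisely when that ceiling is attained, so the reformulation is substantive and not merely tautological.
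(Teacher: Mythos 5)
Your proposal is correct and matches the paper's intent: the paper presents this corollary as a reformulation of Corollary \ref{D23}, using the bound $\max_{i:\lambda_i\neq 0}\Re(\lambda_i)\leq 1+r$ from Corollary \ref{D35} (itself a consequence of the Gersgorin-type Proposition \ref{D16} you invoke) to identify the ``largest possible real eigenvalue'' with $1+r$. Chaining that identification with Corollary \ref{D23} is exactly the argument intended, so your proof takes essentially the same route.
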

For undirected graphs with nonnegative weights, Corollary
\ref{D23} reduces to the well-known result that $\Gamma$ is
bipartite if and only if $2$ is an eigenvalue of $\D$.
\begin{coro}\label{D21} Let $\Gamma \in\g$ be
a bipartite graph and $r(l) = r$ for all $l \in V$. Then,
$\lambda^+\in \mathrm{spec}(\D^+)$ iff $1+r(1-\lambda^+)\in
\mathrm{spec}(\D)$.
\end{coro}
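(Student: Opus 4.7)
The plan is to recognize that Corollary \ref{D21} is precisely the specialization of Theorem \ref{D27} to $k=2$, and therefore no independent argument is needed beyond a short verification of compatibility of definitions and an evaluation of the root of unity.

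First I would check that a bipartite graph in the sense of Definition \ref{D34} coincides with a $2$-partite graph in the sense of the general $k$-partite definition preceding Theorem \ref{D28}. For $k=2$ the auxiliary index $l = k/2 - 1 = 0$, so the general definition requires $w_{ij}/d^{\mathrm{in}}_i > 0$ on edges running between $V_{q-1}$ and $V_q$ (that is, between distinct subsets) and $w_{ij}/d^{\mathrm{in}}_i < 0$ on edges inside $V_{q+l} = V_q$ (that is, within the same subset). This is exactly the alternating sign condition in Definition \ref{D34}, so a bipartite graph is $2$-partite.

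Next I would invoke Theorem \ref{D27} with $k = 2$, using that $r(l) = r$ for all $l\in V$ by hypothesis. The theorem yields the equivalence
\[
\lambda^+ \in \mathrm{spec}(\D^+) \iff 1 - r e^{\pm 2\pi i/2}(1-\lambda^+) \in \mathrm{spec}(\D).
\]
Since $e^{\pm \pi i} = -1$, the right-hand side simplifies to $1 + r(1-\lambda^+)$ and the $\pm$ ambiguity disappears, giving exactly the asserted equivalence.

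There is no real obstacle here; the only minor care is to confirm that Definition \ref{D34} is a genuine instance of the general $k$-partite definition (the possible pitfall being that Definition \ref{D34} is phrased in terms of an unordered partition while the general definition uses an ordered one, but for $k = 2$ both orderings describe the same edge-sign pattern). Once this compatibility is noted, the corollary follows immediately.
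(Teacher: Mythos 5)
Your proposal is correct and matches the paper's (implicit) argument: the corollary is stated as the immediate $k=2$ specialization of Theorem \ref{D27}, with Definition \ref{D34} explicitly identifying bipartite with $2$-partite, and $e^{\pm\pi i}=-1$ collapsing $1-re^{\pm\pi i}(1-\lambda^+)$ to $1+r(1-\lambda^+)$ exactly as you note.
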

In particular, if $\Gamma\in\gp$ is strongly connected, then
$\Gamma$ is bipartite if and only if with $\lambda$ also
$2-\lambda$ is an eigenvalue of $\D$, i.~e.~the real parts of the
eigenvalues are symmetric about one.
\subsubsection{Anti-bipartite graphs}As a special case of anti-$k$-partite graphs we obtain:
\begin{defi}
A graph $\Gamma \in\g$ is anti-bipartite, if
$d^{\mathrm{in}}_i\neq 0$ for all $i\in V$ and the vertex set $V$
can be decomposed into two nonempty subsets such that for
neighbors $i$ and $j$, $\frac{w_{ij}}{d^{\mathrm{in}}_i}< 0$ if
$i$ and $j$ belong to different subsets and
$\frac{w_{ij}}{d^{\mathrm{in}}_i}> 0$ if $i$ and $j$ belong to the
same subset.
\end{defi}
\begin{lemma}\label{D30} $\Gamma\in\gp$
is anti-bipartite if and only if the graph $\Gamma$ is
disconnected and $d^{\mathrm{in}}_i\neq 0$ for all $i$.
\end{lemma}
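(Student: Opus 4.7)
The plan is to match the anti-bipartite conditions directly against $\Gamma\in\gp$; both directions follow immediately once one notices that the strict inequality $w_{ij}/d^{\mathrm{in}}_i<0$ required for a cross-subset edge can never be realized when all weights are nonnegative and in-degrees are positive.

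For the forward direction I would assume $\Gamma\in\gp$ is anti-bipartite with partition $V=V_1\cup V_2$ into nonempty subsets. The definition already gives $d^{\mathrm{in}}_i\neq 0$ for every $i\in V$, and combined with $w_{ij}\geq 0$ this yields $d^{\mathrm{in}}_i>0$. Now suppose toward a contradiction that there exists an edge with $w_{ij}>0$ where $i$ and $j$ lie in different subsets. Then $i,j$ are neighbors in different subsets, so the anti-bipartite condition forces $w_{ij}/d^{\mathrm{in}}_i<0$; but the numerator is nonnegative and the denominator positive, a contradiction. Hence no edge (in either direction) crosses between $V_1$ and $V_2$, and since $V_1,V_2$ are nonempty, $\Gamma$ is weakly disconnected.

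For the converse I would assume $\Gamma\in\gp$ is weakly disconnected with $d^{\mathrm{in}}_i\neq 0$ for all $i$. Pick any decomposition $V=V_1\cup V_2$ into nonempty subsets with no edges crossing between them in either direction; such a decomposition exists by weak disconnectedness. As before $d^{\mathrm{in}}_i>0$ for every $i$. For any pair of neighbors $i,j$ (i.e.\ $w_{ij}>0$), the vertices $i$ and $j$ must lie in the same subset, so $w_{ij}/d^{\mathrm{in}}_i>0$ is automatic; the requirement for different-subset neighbors is vacuously true. This is exactly the definition of anti-bipartite.

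The sole observation driving both directions is that the sign constraint on cross-subset edges in the anti-bipartite definition is incompatible with $w\geq 0$, which forces all edges to remain within a single part, and conversely that once such cross edges are absent the within-part condition is automatic. Consequently there is no real technical obstacle.
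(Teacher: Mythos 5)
Your proof is correct and takes essentially the same route as the paper: the converse direction is identical (distribute the connected components into two nonempty parts and note the same-subset sign condition holds automatically), while for the forward direction the paper merely cites Proposition \ref{D47} (stated there without proof), whose $k=2$ content is exactly the sign incompatibility you spell out, namely that a cross-subset edge would force $w_{ij}/d_i^{\mathrm{in}}<0$, impossible when $w\geq 0$ and $d_i^{\mathrm{in}}\neq 0$. So your argument is, if anything, more self-contained; there is no gap.
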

\begin{proof}One direction follows from Proposition \ref{D47}.

Now assume that the graph $\Gamma\in\gp$ is disconnected and
$d^{\mathrm{in}}_i\neq 0$ for all $i$. Then there exists at least
two connected components such that
$\frac{w_{ij}}{d^{\mathrm{in}}_i}> 0$ for all neighbors $i$ and
$j$. Distribute the connected components (there exist maybe more
than two) into two nonempty subsets $V_1$ and $V_2$. This is an
anti-bipartite decomposition of the graph.
\end{proof}
\begin{coro}  \label{D17}Let $r>1$, then
$1-r$ is an eigenvalue of $\D$ if and only if the graph contains
an anti-bipartite maximal isolated strongly connected component.
\end{coro}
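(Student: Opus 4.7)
The plan is to obtain this corollary as a direct specialization of Theorem \ref{D29} to $k=2$. When $k=2$, the expression $1+re^{\pm 2\pi i /k}$ collapses to the single real value $1-r$, since $e^{\pm \pi i}=-1$, and a graph is anti-$2$-partite precisely when it is anti-bipartite in the sense of the definition just given. Hence Theorem \ref{D29} specialized to $k=2$ reads exactly as the assertion to be proven, provided the hypothesis $r>1$ aligns with the converse condition of that theorem.

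For the forward direction I would simply invoke the first half of Theorem \ref{D29} with $k=2$: the existence of an anti-bipartite maximal isolated strongly connected component yields $1-r \in \mathrm{spec}(\D)$. For the converse I would appeal to the second half of Theorem \ref{D29}, in which the case $k=2$ falls under condition (ii), where the extra assumption $r>1$ is exactly what is required.

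The only delicate point, and the main potential obstacle, is justifying why $r>1$ cannot be dropped. When $r=1$ one has $1-r=0$, which by Proposition \ref{DBasic}(i) is always in $\mathrm{spec}(\D)$ and therefore carries no structural information about an anti-bipartite decomposition. More concretely, within the argument of Theorem \ref{D29} the strict inequality $r>1$ is what forces every vertex of the component to possess a neighbor with $w_{ij}/d_i^{\mathrm{in}}<0$, which in turn makes relation \rf{D59} produce two genuinely distinct $u$-values and thereby a bona fide $2$-partition; the remark following Theorem \ref{D29} already notes this phenomenon. Once this point is acknowledged, the corollary follows from Theorem \ref{D29} with no further calculation.
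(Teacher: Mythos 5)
Your proposal is correct and matches the paper's intended derivation: Corollary \ref{D17} is exactly the case $k=2$ of Theorem \ref{D29}, where $1+re^{\pm 2\pi i/2}=1-r$, anti-$2$-partite means anti-bipartite, and the hypothesis $r>1$ is precisely condition (ii) of that theorem (the paper's remark after Theorem \ref{D29} confirms, as you note, that $r>1$ cannot be dropped since $1-r=0$ is always an eigenvalue). Nothing further is needed.
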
Using Corollary \ref{D35} we can reformulate this as
follows:
\begin{coro}Assume that $r>1$ is satisfied. The spectrum of $\D$ contains the smallest possible real eigenvalue
if and only if the graph $\Gamma\in\g$ contains a maximal,
isolated, anti-bipartite strongly connected component.
\end{coro}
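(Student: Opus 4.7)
The plan is to view this as the $k=2$ specialization of Theorem \ref{D29}, since $1+re^{\pm 2\pi i/2}=1-r$ and the hypothesis $r>1$ matches condition (ii) of that theorem with $k=2$. I would therefore recycle the same two-directional strategy (explicit eigenfunction construction one way, triangle-inequality equality analysis the other way), adapted to this simpler setting.

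For the forward direction, suppose $\Gamma$ contains a maximal isolated anti-bipartite strongly connected component $\Gamma_p$ with bipartition $V_p=V_{p,1}\cup V_{p,2}$. Define $u\in C(V)$ by $u\equiv 1$ on $V_{p,1}$, $u\equiv -1$ on $V_{p,2}$, and $u\equiv 0$ on $V\setminus V_p$. For any $j\in V_p$, the anti-bipartite sign pattern groups positively-normalized neighbors with $j$ (same value of $u$) and negatively-normalized neighbors in the opposite subset (opposite value of $u$); combined with maximality and isolation of $\Gamma_p$ this yields
\[\frac{1}{d_j^{\mathrm{in}}}\sum_{t\in V}w_{jt}u(t)=u(j)\sum_{t\in V_p}\left|\frac{w_{jt}}{d_j^{\mathrm{in}}}\right|=r\,u(j),\]
so $\Delta u(j)=(1-r)u(j)$. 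Since $\Gamma_p$ is isolated, Proposition \ref{D9} gives $1-r\in\mathrm{spec}(\Delta)$.

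For the reverse direction, assume $1-r\in\mathrm{spec}(\Delta)$, equivalently $r\in\mathrm{spec}(P)$. Because $r>1$ the eigenvalue $1-r$ is nonzero and $r\neq 1$, so Proposition \ref{D18} produces a maximal isolated strongly connected component $\Gamma_p$ with $|V_p|\geq 2$ such that $r\in\mathrm{spec}(P_p)$, while Lemma \ref{D58} gives an eigenfunction $u$ with $|u(j)|\equiv c>0$ on $V_p$. The equation $P_pu(j)=r\,u(j)$ together with the equality case of the triangle inequality applied to $\sum_{t\in V_p}(w_{jt}/d_j^{\mathrm{in}})u(t)$ forces every term to have argument equal to $\arg u(j)$; since $|u(t)|=|u(j)|$ and $w_{jt}/d_j^{\mathrm{in}}$ is real, this yields $u(t)=u(j)$ whenever $w_{jt}/d_j^{\mathrm{in}}>0$ and $u(t)=-u(j)$ whenever $w_{jt}/d_j^{\mathrm{in}}<0$. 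Strong connectedness of $\Gamma_p$ then propagates $u$ uniquely, so $u$ assumes only the two values $\pm c$; set $V_{p,1}=\{j:u(j)=c\}$ and $V_{p,2}=\{j:u(j)=-c\}$.

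The main obstacle, and the only place where $r>1$ is truly used, is verifying that both subsets are nonempty. The identity $r(j)=r>1$ means $|d_j^{\mathrm{in}}|=|\sum_t w_{jt}|<\sum_t|w_{jt}|$, so the weights incident to $j$ cannot all share a sign; consequently every $j\in V_p$ has at least one neighbor $t$ with $w_{jt}/d_j^{\mathrm{in}}<0$, which prevents $u$ from being constant on $V_p$ and makes both $V_{p,1},V_{p,2}$ nonempty. The sign rules derived above are then exactly the anti-bipartite conditions, so $\Gamma_p$ is an anti-bipartite maximal isolated strongly connected component. The hypothesis $r>1$ is essential here: when $r=1$ the normalized weights on a strongly connected maximal component must all carry a common sign (cf.\ the lemma preceding Corollary \ref{D35}), so $u$ could be constant and no bipartition is forced.
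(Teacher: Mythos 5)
Your proof is correct and takes essentially the same route as the paper: the paper gets this corollary by combining Corollary \ref{D35} (which, for $r>1$, identifies $1-r$ as the smallest possible real eigenvalue) with Corollary \ref{D17}, i.e.\ the $k=2$ case of Theorem \ref{D29}, whose proof is precisely the explicit $\pm 1$ eigenfunction construction one way and the Proposition \ref{D18}/Lemma \ref{D58} equality-case analysis (with $r>1$ forcing a sign change at every vertex, hence both parts nonempty) the other way, exactly as you did. The only step left implicit in your write-up is the trivial reformulation via Corollary \ref{D35} that ``smallest possible real eigenvalue'' means $1-r$.
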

\begin{example}Consider the graph in Figure \ref{DFig.2}. It is easy to calculate
the spectrum of this graph by using the results derived in this
section. First, note that the graph in Figure \ref{DFig.2} is
bipartite and anti-bipartite. Since $r(i) = 3$ for all $i$, we
have $1\pm3\in\mathrm{spec}(\D)$. Zero is always an eigenvalue of
$\D$. The last eigenvalue is equal to $2$ since $\sum_i\lambda_i =
|V_R|=4$. So we have determined all eigenvalues of the graph in
Figure \ref{DFig.2}.
\begin{figure}
\begin{center}\includegraphics[width = 3.0cm]{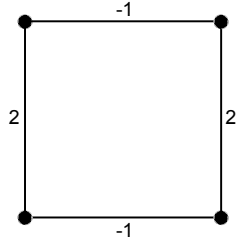}
\caption{\label{DFig.2}  Eigenvalues of $\D$ are $4,2,0,-2$ }
\end{center}
\end{figure}
\end{example}
\section{Bounds for the real and imaginary parts of the
eigenvalues} In this section, we will derive several bounds for
the real and imaginary parts of the eigenvalues of a directed
graph. In the following, we also allow loops in the graph. This
slight generalization is particularly important in the next
section where we introduce the neighborhood graph technique. It is
straightforward to generalize the Laplace operator $\D$ to graphs
with loops. The normalized graph Laplace operator for directed
graphs with loops is defined as:
\[\D:C(V)\rightarrow C(V),\]
\begin{eqnarray}
\Delta v(i) = \left\{\begin{array}{c l}v(i) -
\frac{1}{d^{\mathrm{in}}_i}\sum_j w_{ij} v(j) &\mbox{if } d^{\mathrm{in}}_i \neq 0 \\
0& \mbox{else.}
\end{array} \right.
\end{eqnarray}
The only difference to graphs without loops is that now $w_{ii}$
is not always equal to zero. As for graphs without loops we define
$P= I-\D$. Furthermore, we say that vertex $i$ is in-isolated or
simply isolated if $w_{ij} = 0$ for all $j\in V$. Similarly,
vertex $i$ is said to be in-quasi-isolated or simply
quasi-isolated if $d_i^{\mathrm{in}} = 0$. In particular, an
isolated vertex cannot have a loop. As before, $V_R := \{i\in V :
d_i^\mathrm{in} \neq 0\}$ is the set of all vertices that are not
quasi-isolated.

\subsection{Comparison theorems}\label{D69}

In this section, we show that the real parts of the eigenvalues of
a directed graph can be controlled by the eigenvalues of certain
undirected graphs. Together with well-known estimates for
undirected graphs these comparison results yield estimates the
realparts of the eigenvalues of a directed graph.

We need the following definition:
\begin{defi} \label{D13}
Let $\Gamma\in \g$ be given. The underlying graph
$U(\Gamma)\in\gu$ of $\Gamma$ is obtained from $\Gamma$ by
replacing  each directed edge by an undirected edge of the same
weight. In $U(\Gamma)$ we identify multiple edges between two
vertices with one single edge. The weight of this single edge is
equal to the sum of the weights of the multiple edges.
Furthermore, every loop in $\Gamma$ is replace by a loop of twice
the weight in $U(\Gamma)$.
\end{defi}
Note that the correspondence between directed graphs and their
underlying graphs is not one to one. Indeed, many directed graphs
can have the same underlying graph.

We recall the well-known concept of majorization:
\begin{defi}\label{D11}  Let $a\in\mathbb{R}^{n}$ and $b\in\mathbb{R}^{n}$
be given. If the entries of $a$ and $b$ are arranged in
increasing\footnote{The definition of majorization is not unique
in the literature. Here, we follow the convention in
\cite{Horn90}. In other books, see e.g. \cite{Marshall79},
majorization is defined for vectors arranged in decreasing order.
Reversing the order of the elements has the following consequence:
If $a$ and $b$ are two real vectors whose entries are arranged in
increasing order, and $A$ and $B$ denote the vectors with the same
entries arranged in decreasing order, then $a\prec b$ if and only
if $B\prec A$.} order, then $b$ majorizes $a$, in symbols $a\prec
b$, if
\begin{equation}
\sum_{i=1}^{k}a_{i}\leq\sum_{i=1}^{k}b_{i}\;\;\;\;\;\;\qquad
k=1,...,n-1\label{D122}\end{equation}
 and \begin{equation}
\sum_{i=1}^{n}a_{i}=\sum_{i=1}^{n}b_{i}.\label{D121}\end{equation}
\end{defi}
We will need the following two results:
 \begin{lemma}\label{G7} [R. Rado, see e.g. \cite{Hardy52} p.63 or \cite{Marshall79}]
If $x\prec y$ on $\mathbb{R}^n$ and $a\prec b$ on $\mathbb{R}^m$
then $(x,a)\prec (y,b)$ on $\mathbb{R}^{n+m}$, where $(x,a)$ is
the vector composed of the components of $x$ and $a$ arranged in
increasing order, and similarly for $(y,b)$.
\end{lemma}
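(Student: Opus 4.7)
The plan is to prove the claim directly from the definition of majorization, exploiting the fact that prefix sums of the sorted combined vector admit a simple \emph{min over splits} formula. Concretely, I would first establish that for the vector $(x,a)$, whose entries are (by assumption) arranged in increasing order,
\[
\sum_{i=1}^{k}(x,a)_{i} \;=\; \min_{\substack{p+q=k\\ 0\le p\le n,\, 0\le q\le m}} \Bigl(\sum_{i=1}^{p}x_{i}+\sum_{j=1}^{q}a_{j}\Bigr),
\]
and similarly for $(y,b)$. The justification is that $x$ and $a$ are themselves sorted increasingly, so once one fixes how many entries come from each subvector, the smallest $p$ entries of $x$ together with the smallest $q$ entries of $a$ give the best $p+q=k$ split of the combined vector.

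Next, I would verify the termwise inequality
\[
\sum_{i=1}^{p}x_{i}+\sum_{j=1}^{q}a_{j} \;\le\; \sum_{i=1}^{p}y_{i}+\sum_{j=1}^{q}b_{j}
\]
for every admissible $(p,q)$ with $p+q=k<n+m$. If $p<n$ and $q<m$, this is the direct sum of the partial-sum inequality \rf{D122} applied to $x\prec y$ and to $a\prec b$. The boundary cases $p=n$ or $q=m$ are handled by the total-sum equality \rf{D121}, which upgrades the corresponding half of the bound to an equality while the other half is still controlled by \rf{D122}.

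Combining the two ingredients is then immediate: choosing $(p^{*},q^{*})$ that attains the minimum on the right-hand side for $(y,b)$, the min-formula for $(x,a)$ together with the termwise bound gives
\[
\sum_{i=1}^{k}(x,a)_{i} \;\le\; \sum_{i=1}^{p^{*}}x_{i}+\sum_{j=1}^{q^{*}}a_{j} \;\le\; \sum_{i=1}^{p^{*}}y_{i}+\sum_{j=1}^{q^{*}}b_{j} \;=\; \sum_{i=1}^{k}(y,b)_{i}
\]
for every $k<n+m$. Finally the equality $\sum (x,a)_{i}=\sum(y,b)_{i}$ follows at once by adding the two given total-sum equalities from \rf{D121}, establishing $(x,a)\prec(y,b)$.

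The only real obstacle is the bookkeeping around the boundary values $p\in\{0,n\}$ and $q\in\{0,m\}$, where one must switch from appealing to \rf{D122} to appealing to \rf{D121}; conceptually the proof is short and the key idea is simply that the optimal split for the majorant $(y,b)$ is a \emph{valid} (though not necessarily optimal) split for the majorized pair $(x,a)$, which is exactly what the min-formula converts into the desired partial-sum inequality.
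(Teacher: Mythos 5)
Your argument is correct. Note, however, that the paper itself does not prove Lemma \ref{G7} at all: it is quoted as a known result of Rado with references to Hardy--Littlewood--P\'olya and Marshall--Olkin, so there is no internal proof to compare against. Your proof is a clean, self-contained alternative to the textbook treatments. The key identity, that for increasingly sorted $x$ and $a$ one has $\sum_{i=1}^{k}(x,a)_{i}=\min_{p+q=k}\bigl(\sum_{i=1}^{p}x_{i}+\sum_{j=1}^{q}a_{j}\bigr)$, is exactly right (any $k$-element subselection splits as $p$ entries of $x$ and $q$ of $a$, and for a fixed split the smallest possible sum uses the first $p$ and first $q$ entries), and the termwise comparison of splits, with \rf{D122} in the interior and \rf{D121} on the boundary $p=n$ or $q=m$ (the case $p=0$ or $q=0$ being an empty-sum triviality rather than an instance of \rf{D122}), then transfers the optimal split for $(y,b)$ into a valid upper bound for the prefix sums of $(x,a)$; the total-sum equality follows by adding the two equalities \rf{D121}. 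By contrast, the classical proofs in the cited references typically pass through a structural characterization of majorization, e.g.\ doubly stochastic matrices, $T$-transforms, or the equivalence of $a\prec b$ with an inequality for all convex (respectively, suitably monotone piecewise-linear) functions, under which concatenation is immediate; your route avoids all of that machinery and uses only the definition, at the cost of the mild bookkeeping over splits that you already identified.
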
In particular, Lemma \ref{G7} shows that the
majorization property is preserved if we append the same entries
to both $x$ and $y$ (choose $a=b$ in Lemma \ref{G7}).

In the sequel, let the symmetric part of a matrix $M$ be denoted
by $S(M) := \frac{1}{2}(M + M^\top)$. We make use of a classical
result by Ky Fan \cite{Fan50}:
\begin{lemma}\label{GFan} Let $\lambda(S(M))$ and $\Re[\lambda(M)]$ denote
the column vectors whose components are the eigenvalues of $S(M)$
and the real parts of the eigenvalues of $M$, respectively. If the
components of $\lambda(S(M))$ and $\Re[\lambda(M)]$ are arranged
in increasing order, then for every matrix $M$ we have
\[ \lambda(S(M))\prec\Re[\lambda(M)].\]
\end{lemma}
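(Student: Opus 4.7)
The plan is to deduce the inequality from two well-known facts: Schur's unitary triangularization theorem, and the (easy direction of the) Schur--Horn theorem stating that the diagonal of a Hermitian matrix is majorized by its spectrum. Since the matrices that arise in the paper are real, I may identify $M^{\top}$ with $M^{*}$ throughout, so that $S(M)=\tfrac{1}{2}(M+M^{*})$ is Hermitian; this is the standard setting for Ky Fan's theorem.

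First I would apply Schur's theorem to write $M=UTU^{*}$ with $U$ unitary and $T$ upper triangular. The diagonal of $T$ may be permuted to display the eigenvalues $\lambda_{1}(M),\ldots,\lambda_{n}(M)$ in any prescribed order; I would choose the order so that $\Re[\lambda_{i}(M)]$ is increasing in $i$. Next, since unitary conjugation preserves the spectrum,
\[
U^{*}S(M)U \;=\; \tfrac{1}{2}\bigl(U^{*}MU + U^{*}M^{*}U\bigr) \;=\; \tfrac{1}{2}(T+T^{*}) \;=:\; H,
\]
so $\mathrm{spec}(H)=\mathrm{spec}(S(M))$. Crucially, $H$ is Hermitian and its diagonal entries are exactly $\Re[\lambda_{i}(M)]$, already in increasing order.

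The problem thus reduces to the purely Hermitian statement: if a Hermitian matrix $H$ has diagonal $(h_{1},\ldots,h_{n})$ and eigenvalues $(\mu_{1},\ldots,\mu_{n})$, both arranged in increasing order, then
\[
\sum_{i=1}^{k}\mu_{i} \;\leq\; \sum_{i=1}^{k}h_{i}\qquad(k=1,\ldots,n-1),
\]
with equality at $k=n$. This is Schur's inequality; I would prove it by invoking the Ky Fan minimax identity $\sum_{i=1}^{k}\mu_{i}=\min_{\dim V=k}\operatorname{tr}(P_{V}HP_{V})$ and evaluating the right-hand side on the $k$-dimensional subspace spanned by the standard basis vectors corresponding to the $k$ smallest diagonal entries of $H$; the equality at $k=n$ is just $\operatorname{tr}(H)=\sum_{i}\mu_{i}=\sum_{i}h_{i}$. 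Translated back via Definition \ref{D11}, this says $\lambda(S(M))\prec\Re[\lambda(M)]$, as required.

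The main obstacle is really just the Schur inequality for Hermitian matrices, and that is a classical result that the authors are content to cite from \cite{Fan50}. The essential conceptual move is the Schur triangularization step, which converts a statement about the non-Hermitian matrix $M$ into one about the Hermitian compression $\tfrac{1}{2}(T+T^{*})$; after that, the argument is a one-line appeal to majorization of the diagonal by the spectrum.
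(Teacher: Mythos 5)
Your proof is correct. Note, though, that the paper does not prove Lemma \ref{GFan} at all: it is quoted as a classical result and attributed to Ky Fan \cite{Fan50}, so any comparison is with the literature rather than with an argument in the text. What you give is essentially the standard proof of Fan's theorem: Schur triangularization $M=UTU^{*}$ turns $S(M)$ into the Hermitian matrix $H=\tfrac{1}{2}(T+T^{*})$ whose diagonal consists of the numbers $\Re[\lambda_i(M)]$, and then Schur's majorization inequality (diagonal of a Hermitian matrix is majorized by its spectrum, here derived from the Ky Fan minimum principle by testing on coordinate subspaces) gives exactly the partial-sum inequalities required by Definition \ref{D11}, with the trace identity supplying equality at $k=n$; the direction of the inequalities matches the paper's increasing-order convention. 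Your preliminary remark is also the right one to make: as literally stated "for every matrix $M$" with $S(M)=\tfrac{1}{2}(M+M^{\top})$, the lemma only makes sense for real $M$ (or with $M^{\top}$ replaced by $M^{*}$), which is the setting in which the paper applies it, since the Laplacians there are real matrices. So your write-up supplies a complete, self-contained justification of a statement the paper merely cites, at the modest cost of invoking the Ky Fan minimum principle (or, equivalently, the easy half of Schur--Horn) as the underlying Hermitian fact.
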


Using Definition \ref{D13} and Definition \ref{D11} we state the
following comparison result.
\begin{theo}\label{DAB} If $\Gamma\in\g$ is balanced, then
\[\lambda(\D(U(\Gamma))) \prec \Re[\lambda(\D(\Gamma))],\] i.~e.~the
eigenvalues of the underlying graph $U(\Gamma)$ are majorized by
the real parts of the eigenvalues of $\Gamma$.
\end{theo}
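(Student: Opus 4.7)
The strategy is to invoke Ky Fan's inequality (Lemma \ref{GFan}) after performing a similarity transformation that identifies, up to similarity, the symmetric part of $\D(\Gamma)$ with $\D(U(\Gamma))$. The balancedness hypothesis will enter at precisely one point, in the identity $D(U(\Gamma))=2D(\Gamma)$.

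First I would dispose of quasi-isolated vertices. Because $\Gamma$ is balanced, $d_i^{\mathrm{in}}=0$ is equivalent to $d_i^{\mathrm{out}}=0$, so a quasi-isolated vertex of $\Gamma$ is totally isolated and remains quasi-isolated in $U(\Gamma)$. By \rf{D5}, each such vertex contributes the eigenvalue $0$ to both $\mathrm{spec}(\D(\Gamma))$ and $\mathrm{spec}(\D(U(\Gamma)))$ with the same multiplicity $|V\setminus V_R|$. By Lemma \ref{G7}, majorization is preserved under appending matching entries, so it is enough to establish the majorization for the reduced operators on $V_R$, where $D$ is invertible.

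Next I would construct the key similarity. Set $N := D^{1/2}\D(\Gamma)D^{-1/2} = I - D^{-1/2}WD^{-1/2}$; this matrix has the same spectrum as $\D(\Gamma)$. Its symmetric part is
\[ S(N) = I - \tfrac{1}{2}D^{-1/2}(W+W^\top)D^{-1/2}. \]
The balanced assumption yields $d_i^{\mathrm{in}}(U(\Gamma))=\sum_j(w_{ij}+w_{ji})=2d_i^{\mathrm{in}}$ and $W(U(\Gamma))=W+W^\top$, so $\D(U(\Gamma))=I-(2D)^{-1}(W+W^\top)$. A direct computation then shows $(2D)^{1/2}\D(U(\Gamma))(2D)^{-1/2}=S(N)$, so $S(N)$ and $\D(U(\Gamma))$ have identical spectra.

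Finally, applying Lemma \ref{GFan} to $N$ gives $\lambda(S(N))\prec \Re[\lambda(N)]$, and the two similarity identifications from the previous paragraph turn this into $\lambda(\D(U(\Gamma)))\prec \Re[\lambda(\D(\Gamma))]$ on $V_R$; appending the $|V\setminus V_R|$ common zero eigenvalues via Lemma \ref{G7} completes the proof. The main obstacle is the algebraic matching $S(D^{1/2}\D(\Gamma)D^{-1/2})=(2D)^{1/2}\D(U(\Gamma))(2D)^{-1/2}$; this is where balancedness is indispensable, for without it $D(U(\Gamma))$ would not be a scalar multiple of $D$ and the two sides would fail to coincide.
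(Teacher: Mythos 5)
Your proof is correct and follows essentially the same route as the paper's: conjugate by $D^{1/2}$ to the symmetrically normalized Laplacian, identify its symmetric part with a matrix similar to $\D(U(\Gamma))$ via the balancedness identity $d_i(U(\Gamma))=2d_i^{\mathrm{in}}(\Gamma)$, apply Ky Fan's inequality, and append the common zero eigenvalues of the quasi-isolated vertices with Rado's lemma. (One immaterial slip: a quasi-isolated vertex of a signed balanced graph need not be totally isolated, since weights may cancel; but all you actually use is that it remains quasi-isolated in $U(\Gamma)$, which your degree identity supplies.)
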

\begin{proof}Recall the definition of the reduced Laplace operator
$\D_R = I_R - D^{-1}_RW_R$ in Eq.~\rf{D67}. It is straightforward
to generalize $\D_R$ for graphs with loops. Here however, instead
of $\D_R$ we consider the reduced normalized Laplace operator
$\mathcal{L}_R := I_R-D_R^{-1/2}W_RD_R^{-1/2}$. In the sequel, we
will study matrix representations of $\D_R$ and $\mathcal{L}_R$
that will also be denoted by $\D_R$ and $\mathcal{L}_R$. Since
$D_R^{1/2}$ is nonsingular and
\[\D_R = D_R^{-1/2}\mathcal{L}_RD_R^{1/2},\] it follows that $\mathcal{L}_R$ and $\D_R$ are
similar and hence have the same spectrum. We claim that the
reduced Laplace operator $\mathcal{L}_R$ satisfies
\[S(\mathcal{L}_R(\Gamma)) = \mathcal{L}_R(U(\Gamma)).\]
Since $\Gamma$ is balanced, the degrees of the vertices satisfy
\be\label{D68} 2 d_i^\mathrm{in}(\Gamma) = d_i(U(\Gamma)).\qe
Thus, in particular, the number of quasi-isolated vertices in
$U(\Gamma)$ and $\Gamma$ is the same and so the matrices
$S(\mathcal{L}_R(\Gamma))$ and $ \mathcal{L}_R(U(\Gamma))$ have
the same dimension.

By definition, the diagonal elements satisfy  \begin{eqnarray*}
S(\mathcal{L}_R(\Gamma))_{ii} = 1 -
\frac{w_{ii}}{\sqrt{d_i^\mathrm{in}(\Gamma)d_i^\mathrm{in}(\Gamma)
}}
\end{eqnarray*} and
\[\mathcal{L}_R(U(\Gamma))_{ii}=1 - \frac{2w_{ii}}{\sqrt{d_i(U(\Gamma))d_i(U(\Gamma))}} =
1 -
\frac{2w_{ii}}{\sqrt{2d_i^\mathrm{in}(\Gamma)2d_i^\mathrm{in}(\Gamma)}}
=1-\frac{w_{ii}}{\sqrt{d_i^\mathrm{in}(\Gamma)d_i^\mathrm{in}(\Gamma)
}}\] by \rf{D68}. For the off-diagonal elements, we have
\begin{eqnarray*}
S(\mathcal{L}_R(\Gamma))_{ij} =
-1/2\left(\frac{w_{ij}}{\sqrt{d_i^{\mathrm{in}}(\Gamma)d_j^{\mathrm{in}}(\Gamma)}}
+
\frac{w_{ji}}{\sqrt{d_j^{\mathrm{in}}(\Gamma)d_i^{\mathrm{in}}(\Gamma)}}\right)
\end{eqnarray*}
and
\begin{eqnarray*}\mathcal{L}_R(U(\Gamma))_{ij}&=&-
\frac{w_{ij}+w_{ji}}{\sqrt{ d_i(U(\Gamma))d_j(U(\Gamma))}}
\\&=&
-1/2\frac{w_{ij}+w_{ji}}{\sqrt{d_i^{\mathrm{in}}(\Gamma)
d_j^{\mathrm{in}}(\Gamma)}} ,\end{eqnarray*} where we used
\rf{D68}. This proves our claim. Now it follows that
\[\lambda(\D_R(U(\Gamma)))= \lambda(\mathcal{L}_R(U(\Gamma)))
=\lambda(S(\mathcal{L}_R(\Gamma))) \prec
\Re(\lambda(\mathcal{L}_R(\Gamma)))= \Re(\lambda(\D_R(\Gamma))),\]
where we used Lemma \ref{GFan} and the fact that $\mathcal{L}_R$
and $\D_R$ have the same spectrum. By \rf{D5}, the spectrum of
$\D(\Gamma)$ ($\D(U(\Gamma))$) consists of all eigenvalues of
$\D_R(\Gamma)$ ($\D_R(U(\Gamma))$) and $|V\setminus V_R|$ times
the eigenvalue zero. From \rf{D68} it follows that the number of
quasi-isolated vertices is the same in $U(\Gamma)$ and $\Gamma$.
Hence Lemma \ref{G7} implies $$\lambda(\D(U(\Gamma))) \prec
\Re[\lambda(\D(\Gamma))].$$
\end{proof}

Theorem \ref{DAB} is used in \cite{BauerAtay} to compare the
synchronizability of directed and undirected networks of coupled
phase oscillators.
\\\\
In particular Theorem \ref{DAB} implies:
\begin{coro}\label{D50}For a balanced graph $\Gamma \in \g$ we have
\[\min_{i\neq 0}\lambda_i(\D(U(\Gamma)))\leq \min_{i\neq 0}
\Re(\lambda_i(\D(\Gamma)))\] and \[\max_i
\Re(\lambda_i(\D(\Gamma)))\leq \max_i\lambda_i(\D(U(\Gamma))),\]
where $\lambda_0=0$ is the eigenvalue corresponding to the
constant function.
\end{coro}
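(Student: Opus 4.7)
The plan is to extract both inequalities directly from the majorization $\lambda(\D(U(\Gamma))) \prec \Re[\lambda(\D(\Gamma))]$ proved in Theorem \ref{DAB}. Arrange the eigenvalues of $\D(U(\Gamma))$ in increasing order as $\mu_1\leq \cdots\leq \mu_n$ and the real parts of the eigenvalues of $\D(\Gamma)$ in increasing order as $\rho_1\leq \cdots \leq \rho_n$, so that by Definition \ref{D11} we have $\sum_{i=1}^{k}\mu_i \leq \sum_{i=1}^{k}\rho_i$ for $k=1,\dots,n-1$ together with $\sum_{i=1}^{n}\mu_i=\sum_{i=1}^{n}\rho_i$.

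For the upper bound, I would combine the instance $k=n-1$ with the total-sum equality. Subtracting gives $\mu_n \geq \rho_n$, which is exactly $\max_i \Re(\lambda_i(\D(\Gamma))) \leq \max_i \lambda_i(\D(U(\Gamma)))$.

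For the lower bound, I would first invoke Proposition \ref{DBasic}(i): the constant function is an eigenfunction of both $\D(\Gamma)$ and $\D(U(\Gamma))$ with eigenvalue $0$, so the value $0$ occurs in both sorted sequences. The key auxiliary step is to show that removing one copy of this common value preserves majorization: letting $\tilde\mu,\tilde\rho$ denote the sequences of length $n-1$ obtained by deleting the chosen $0$, one still has $\tilde\mu\prec\tilde\rho$. The $k=1$ instance of this reduced majorization then reads $\tilde\mu_1 \leq \tilde\rho_1$, which is exactly $\min_{i\neq 0}\lambda_i(\D(U(\Gamma))) \leq \min_{i\neq 0}\Re(\lambda_i(\D(\Gamma)))$.

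The main technical point, and the only part that is not a one-line consequence of the definition of majorization, is this removal step. Writing $i_0,j_0$ for the positions of the deleted $0$ in the sorted sequences $\mu,\rho$, I would distinguish the cases $i_0=j_0$, $i_0<j_0$, and $i_0>j_0$. The first case reduces verbatim to the original inequality at shifted indices. In the second case, for any $k$ with $i_0\leq k<j_0$ one has $\rho_{k+1}\leq \rho_{j_0}=0$, hence $\sum_{i=1}^{k+1}\rho_i \leq \sum_{i=1}^{k}\rho_i$, and chaining this with the original majorization gives the desired partial sum inequality for $\tilde\mu,\tilde\rho$; the case $i_0>j_0$ is symmetric, using $\mu_{k+1}\geq 0$ for $k\geq i_0$. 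Once this removal lemma is established, both estimates of the corollary follow at once.
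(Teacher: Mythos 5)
Your proposal is correct and takes the route the paper intends: the paper states this corollary without proof as an immediate consequence of Theorem \ref{DAB}, and your derivation — the max-inequality from the $k=n-1$ partial sum combined with the total-sum equality, and the min-inequality by deleting the common eigenvalue $0$ (guaranteed for both graphs by Proposition \ref{DBasic}(i)) and checking that majorization survives the deletion — supplies exactly the details the paper omits; the deletion lemma is indeed the one non-trivial point, since majorization alone does not compare second-smallest entries. One small slip in the last case: when $i_0>j_0$ the problematic range is $j_0\leq k<i_0$, where $\sum_{j\leq k}\tilde\mu_j=\sum_{j\leq k}\mu_j\leq\sum_{j\leq k}\rho_j\leq\sum_{j\leq k+1}\rho_j=\sum_{j\leq k}\tilde\rho_j$ uses $\rho_{k+1}\geq\rho_{j_0}=0$, not $\mu_{k+1}\geq0$ for $k\geq i_0$; for $k\geq i_0$ the reduced inequality is just the original one at index $k+1$ and needs no sign information.
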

Corollary \ref{D50} can now be used to derive explicit bounds for
the real parts of the eigenvalues of a balanced directed graph by
utilizing eigenvalue estimates for undirected graphs. For that
reason, we recall the definition of the Cheeger constant and the
dual Cheeger constant of an undirected graph.

\begin{defi} For an undirected graph the Cheeger constant $h$ is
 defined in the following way \cite{Chung96}:
\bel{ch1}
 h:= \min_{W\subsetneq V}
\frac{|E(W,\overline{W})|}{\min\{\mbox{vol}(W),
\mbox{vol}(\overline{W})\}}, \qe where $W$ and
$\overline{W}=V\setminus W$ yield a partition of the vertex set
$V$ and $W,\overline{W}$ are both nonempty. Here the volume of $W$
is given by $\mbox{vol}(W) := \sum_{i\in W} d_i$. Furthermore,
$E(W,\overline{W})\subseteq E$ is the subset of all edges with one
vertex in $W$ and one vertex in $\overline{W}$, and
$|E(W,\overline{W})|:= \sum_{k\in W, l\in \overline{W}}w_{kl}$ is
the sum of the weights of all edges in $E(W,\overline{W})$.
Similarly, the dual Cheeger constant $\overline{h}$ is defined as
follows \cite{BauerJost}:  For a partition $V_1,V_2,V_3$ of the
vertex set $V$ where $V_1$ and $V_2$ are both nonempty, we define
\be\label{hh}\overline{h}:= \max_{V_1,V_2}
\frac{2|E(V_1,V_2)|}{\mbox{vol}(V_1)+\mbox{vol}(V_2)}.\qe
Although, it seems that $\overline{h}$ does not depend on $V_3$,
$\overline{h}$ is well-defined. In order to see this we note that
for a partition $V_1,V_2$ and $V_3$ of $V$, the volume of $V_i$
can also be written in the form
\begin{equation}
\mbox{vol}(V_i) = \sum_{j=1}^3|E(V_i,V_j)|
\end{equation}
Consequently, $\overline{h}$ is given by
\begin{equation} \overline{h} = \max_{V_1,V_2}
\frac{2|E(V_1,V_2)|}{\sum_{j=1}^3|E(V_1,V_j)| +
\sum_{j=1}^3|E(V_2,V_j)| }\end{equation} and hence depends on
$V_3$.
\end{defi}
It is well known that the Cheeger and the dual Cheeger constant
control the eigenvalues of undirected graphs with nonnegative
weights.
\begin{lemma}\label{Eigenvalues} For an undirected graph  with
nonnegative weights $\Gamma\in \gpu$ we
have:\begin{itemize}\item[(i)]\cite{Chung96} The smallest
nontrivial eigenvalue $\lambda_1$ satisfies
\[1-\sqrt{1-h^2}\leq \lambda_1\leq 2h.\]\item[(ii)]\cite{BauerJost} The largest
eigenvalue $\lambda_{n-1}$ satisfies
\[2\overline{h}\leq \lambda_{n-1}\leq 1+\sqrt{1-(1-\overline{h})^2}.\]
\end{itemize}
\end{lemma}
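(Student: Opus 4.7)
The plan is to prove the four stated inequalities by the classical Cheeger-type technique, leveraging the fact that on an undirected graph $\Gamma \in \gpu$ the normalized Laplacian $\D$ is self-adjoint with respect to the degree-weighted inner product $\langle f,g\rangle := \sum_i d_i f(i) \overline{g(i)}$. Its Rayleigh quotient reads
\[
R(f) \;=\; \frac{\sum_{i\sim j} w_{ij}(f(i)-f(j))^2}{\sum_i d_i f(i)^2},
\]
with $\lambda_1 = \min\{R(f): f\perp \mathbf 1\}$ and $\lambda_{n-1} = \max\{R(f): f\ne 0\}$, and I would use this variational description as the sole interface with the graph structure.

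The two \emph{easy} directions ($\lambda_1\le 2h$ and $\lambda_{n-1}\ge 2\overline{h}$) follow from plugging in the test functions naturally associated to the sets realising $h$ and $\overline{h}$. For the upper bound on $\lambda_1$, take a partition $(W,\overline W)$ achieving $h$ and use $f := \mathrm{vol}(\overline W)\mathbf 1_W - \mathrm{vol}(W)\mathbf 1_{\overline W}$, which is automatically orthogonal to the constant function; a direct computation bounds $R(f)$ by $2h$. For the lower bound on $\lambda_{n-1}$, take a tripartition $(V_1,V_2,V_3)$ realising $\overline{h}$ and use $f:=\mathbf 1_{V_1}-\mathbf 1_{V_2}$, whose Rayleigh quotient is at least $2\overline{h}$ after a routine estimate on numerator and denominator.

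The two \emph{hard} directions require a sweep / co-area argument on an extremal eigenfunction. For $\lambda_1 \ge 1-\sqrt{1-h^2}$, take an eigenfunction $g$ for $\lambda_1$, split it into positive and negative parts, and sweep through the super-level sets $W_t := \{g > t\}$. The algebraic identity $g(i)^2 - g(j)^2 = (g(i)-g(j))(g(i)+g(j))$ together with Cauchy--Schwarz yields
\[
\Bigl(\sum_{i\sim j} w_{ij}|g(i)^2-g(j)^2|\Bigr)^2 \;\le\; \Bigl(\sum_{i\sim j} w_{ij}(g(i)-g(j))^2\Bigr)\Bigl(\sum_{i\sim j} w_{ij}(g(i)+g(j))^2\Bigr),
\]
and after identifying the first factor as $2\lambda_1 \sum_i d_i g(i)^2$ and the second as $(2-\lambda_1)\cdot 2\sum_i d_i g(i)^2$, the co-area formula $\int|E(W_t,\overline W_t)|\,dt = \tfrac{1}{2}\sum_{i\sim j}w_{ij}|g(i)^2-g(j)^2|$ produces a level cut with conductance $\le\sqrt{\lambda_1(2-\lambda_1)}=\sqrt{1-(1-\lambda_1)^2}$, giving the claim. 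The bound on $\lambda_{n-1}$ is dual: apply the same sweep to an eigenfunction $g$ for $\lambda_{n-1}$, but now on the \emph{two-sided} level sets $V_1^t:=\{g>t\}$ and $V_2^t:=\{g<-t\}$, producing a tripartition of high $\overline{h}$-density.

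The main obstacle is the co-area step, where one has to match the Cauchy--Schwarz factorisation to exactly the quantity that the definitions \eqref{ch1} and \eqref{hh} count, and ensure that the clean closed form $1 \pm \sqrt{1-(1\mp\lambda)^2}$ emerges from the quadratic $2\lambda - \lambda^2$. This bookkeeping is carried out in detail in \cite{Chung96} for part $(i)$ and in \cite{BauerJost} for part $(ii)$; my proof would essentially transcribe those arguments after verifying that the conventions for $h$ and $\overline h$ agree with those used there.
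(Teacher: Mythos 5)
The paper does not actually prove this lemma: it is quoted verbatim from the cited sources, part (i) from \cite{Chung96} and part (ii) from \cite{BauerJost}. Your sketch correctly outlines exactly those standard arguments (test functions built from the optimal partition/tripartition for $\lambda_1\le 2h$ and $\lambda_{n-1}\ge 2\overline{h}$, and the eigenfunction sweep with Cauchy--Schwarz and the co-area identity for the two hard directions), and deferring the remaining bookkeeping to \cite{Chung96} and \cite{BauerJost} is precisely what the paper itself does.
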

Combining Lemma \ref{Eigenvalues} with Corollary \ref{D50} we
obtain:

\begin{theo}\label{D19}Let $\Gamma\in\gp$ be a balanced graph, then
\[0\leq 1-\sqrt{1-h^2(U(\Gamma))}\leq \min_{i\neq 0}\Re(\lambda_i(\D(\Gamma)))
\] and \[\max_i\Re(\lambda_i(\D(\Gamma)))\leq
1+\sqrt{1-(1-\overline{h}(U(\Gamma)))^2}\leq 2,\] where
$h(U(\Gamma))$ and $\overline{h}(U(\Gamma))$ are the Cheeger
constant and the dual Cheeger constant of the underlying graph
$U(\Gamma)$.
\end{theo}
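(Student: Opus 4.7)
The plan is to stitch together Corollary \ref{D50}, which controls the real parts of the directed spectrum by the (necessarily real) spectrum of the underlying graph, with the Cheeger-type bounds of Lemma \ref{Eigenvalues}. The key preliminary observation is that if $\Gamma\in\gp$, then the underlying graph $U(\Gamma)$ from Definition \ref{D13} automatically lies in $\gpu$: replacing each directed edge by an undirected one of the same weight and summing in the case of antiparallel pairs preserves nonnegativity of the weight function, so the Cheeger bounds of Lemma \ref{Eigenvalues} are indeed applicable to $U(\Gamma)$.

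With this in place, I would first apply Corollary \ref{D50} to the balanced graph $\Gamma$, which yields the two sandwich inequalities
\[\min_{i\neq 0}\lambda_i(\D(U(\Gamma)))\leq \min_{i\neq 0}\Re(\lambda_i(\D(\Gamma)))\]
and
\[\max_i\Re(\lambda_i(\D(\Gamma)))\leq \max_i\lambda_i(\D(U(\Gamma))).\]
Since $U(\Gamma)\in\gpu$, the spectrum of $\D(U(\Gamma))$ is real and contained in $[0,2]$, so $\min_{i\neq 0}\lambda_i(\D(U(\Gamma)))=\lambda_1(\D(U(\Gamma)))$ (the smallest nontrivial eigenvalue) and $\max_i\lambda_i(\D(U(\Gamma)))=\lambda_{n-1}(\D(U(\Gamma)))$.

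Next I would invoke Lemma \ref{Eigenvalues} applied to $U(\Gamma)$: part (i) gives $\lambda_1(\D(U(\Gamma)))\geq 1-\sqrt{1-h^2(U(\Gamma))}$, and part (ii) gives $\lambda_{n-1}(\D(U(\Gamma)))\leq 1+\sqrt{1-(1-\overline{h}(U(\Gamma)))^2}$. Chaining these with the inequalities from Corollary \ref{D50} yields precisely the two middle inequalities of the theorem. The outer bounds $0\leq 1-\sqrt{1-h^2(U(\Gamma))}$ and $1+\sqrt{1-(1-\overline{h}(U(\Gamma)))^2}\leq 2$ are immediate from the fact that both $h(U(\Gamma))$ and $\overline{h}(U(\Gamma))$ lie in $[0,1]$, which is a direct consequence of the definitions in \rf{ch1} and \rf{hh}.

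I do not anticipate any genuine obstacle, since every tool is already proved in the excerpt and the argument is essentially an assembly. The one detail worth care is the index matching: we need that on the undirected side ``$\min_{i\neq 0}$'' really picks out the smallest nontrivial eigenvalue $\lambda_1$ (rather than a further eigenvalue that happens to vanish), but this is guaranteed by the nonnegativity of the spectrum of $\D(U(\Gamma))$ for $U(\Gamma)\in\gpu$, together with the fact that the majorization inequality of Corollary \ref{D50} compares quantities of the same dimension because $\Gamma$ and $U(\Gamma)$ have, by the balancedness assumption, the same set of quasi-isolated vertices (cf.\ \rf{D68}).
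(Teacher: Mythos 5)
Your proposal is correct and follows essentially the same route as the paper's proof: apply Corollary \ref{D50} to pass from the real parts of the directed spectrum to the spectrum of $U(\Gamma)$, observe that $U(\Gamma)\in\gpu$, and then invoke the Cheeger and dual Cheeger bounds of Lemma \ref{Eigenvalues}. The extra remarks you add (nonnegativity of the weights of $U(\Gamma)$, the index matching, and the outer bounds coming from $h,\overline{h}\in[0,1]$) are accurate elaborations of steps the paper leaves implicit.
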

\begin{proof}Corollary \ref{D50} implies that
$\min_{i\neq 0}\lambda_i(\D(U(\Gamma)))\leq \min_{i\neq
0}\Re(\lambda_i(\D(\Gamma)))$ and
$\max_i\Re(\lambda_i(\D(\Gamma)))\leq
\max_i\lambda_i(\D(U(\Gamma)))$. Since $U(\Gamma)\in\gpu$, we can
use the estimates in Lemma \ref{Eigenvalues} to control the
eigenvalues of $\D(U(\Gamma))$. This completes the proof.
\end{proof}
Theorem \ref{D19} allows us to interpret the smallest nontrivial
realpart and the largest realpart of the eigenvalues of a balanced
directed graph $\Gamma\in\gp$ in the following way: If the
smallest nontrivial realpart of a balanced directed graph is
small, then it is easy to cut the graph into two large pieces and
if the largest realpart is close to $2$ then the graph is close to
a bipartite one. We illustrate this by considering the following
example.
\begin{example}
We consider the directed cycle $C_n$ of length $n$. Since $C_n$ is
a $n$-partite graph its eigenvalues are given by $1-e^{2\pi i
\frac{k}{n}}$ for $k=0,1,\ldots, n-1$. This implies that
$\min_{i\neq0}\Re(\lambda_i)= 1 - \cos( \frac{2\pi}{n})\rightarrow
0$ as $n \rightarrow \infty$ and $\max_i\Re(\lambda_i) =2$ if $n$
is even and  $\max_i\Re(\lambda_i)
=1-\cos(\frac{n-1}{n}\pi)\rightarrow 2$ if $n$ is odd as
$n\rightarrow \infty$. Since $C_n$ is balanced, Theorem \ref{D19}
implies that it is easy to cut $C_n$ into two large pieces (if $n$
is sufficiently large) and $C_n$ is bipartite if $n$ is even and
close to a bipartite graph if $n$ is sufficiently large and odd.
Indeed, $C_n$ is bipartite if $n$ is even, close to a bipartite
graph if $n$ is odd, and we only have to remove two edges in order
to cut $C_n$ into two large pieces.
\end{example}

Of course, any other eigenvalue estimate than the Cheeger estimate
and the dual Cheeger estimate leads to similar estimates as in
Theorem \ref{D19}. In particular, one can control, $\min_{i\neq
0}\Re(\lambda_i(\D(\Gamma)))$ and
$\max_{i}\Re(\lambda_i(\D(\Gamma)))$ in terms of the diameter
\cite{Chung97, Landau81}, the Olliver-Ricci curvature \cite{BJL}
or arguments involving canonical paths \cite{Diaconis91}.

Now we derive a second comparison theorem that leads to further
eigenvalue estimates. Instead of using the underlying graph
$U(\Gamma)$, we use in the following a different undirected graph
$\widetilde{\Gamma}$ to control the eigenvalues of directed
graphs.

We say that the operator $P=I-\D$ is irreducible if its matrix
representations are irreducible. It is easy to see \cite{Horn90}
that $P$ is irreducible if the graph $\Gamma$ is strongly
connected and $V_R=V$, i.e. $d_i^\mathrm{in} \neq 0$ for all $i$.
If we restrict ourselves to strongly connected graphs with
nonnegative weights, the Perron-Frobenius Theorem \cite{Horn90}
implies that there exists a positive function $\phi$ (i.e.
$\phi(i)>0$ for all $i\in V$) that satisfies \be \label{D80}
\sum_j\frac{w_{ji}}{d_j^\mathrm{in}}\phi(j) = \rho \phi(i) =
\phi(i) \quad \forall i,\qe where $\rho =1$ is the spectral radius
of $P$. The function $\phi$ is sometimes called the Perron vector
of $P$ and is used in the following construction.

\begin{defi}\label{D54} Let $\Gamma=(V,E)\in\gp$ be a strongly connected graph.
The graph $\widetilde{\Gamma}=(V,\widetilde{E})\in \gpu$ is
obtained from $\Gamma$ by replacing every weight $w_{ij}$ by
\[\widetilde{w}_{ij}=\frac{w_{ij}}{d_i^{\mathrm{in}}}\phi(i) + \frac{w_{ji}}{d_j^{\mathrm{in}}}\phi(j).\]
\end{defi}
Since the weights of the edges are nonnegative and the function
$\phi$ is positive, $\widetilde{\Gamma}\in\gpu$ is an undirected
graph with nonnegative weights. The degree $\widetilde{d}_i$ of
any vertex $i\in V$ in the new graph $\widetilde{\Gamma}$ is given
by \be \label{D51} \widetilde{d}_i = \sum_j\widetilde{w}_{ij}
=\sum_j \frac{w_{ij}}{d_i^{\mathrm{in}}}\phi(i) + \sum_j
\frac{w_{ji}}{d_j^{\mathrm{in}}}\phi(j) = 2\phi(i),\qe where we
used the definition of the in-degree $d_i^\mathrm{in}$ and
\rf{D80}.
\begin{theo}\label{D53}
Let $\Gamma\in\gp$ be an strongly connected graph, then
\[\min_{i\neq 0}\lambda_i(\D(\widetilde{\Gamma}))\leq \min_{i\neq0}\Re(\lambda_i(\D(\Gamma)))\leq
\max_i\Re(\lambda_i(\D(\Gamma)))\leq
\max_{i}\lambda_i(\D(\widetilde{\Gamma})).\]
\end{theo}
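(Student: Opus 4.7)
The plan is to mirror the strategy used for Theorem~\ref{DAB}: construct an operator similar to a matrix representation of $P = I - \D(\Gamma)$ whose symmetric part is exactly a matrix representation of $I - \D(\widetilde{\Gamma})$, and then invoke the Ky Fan majorization inequality (Lemma~\ref{GFan}) to relate the real parts of the directed spectrum to the (real) spectrum of the undirected comparison graph.

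First I would note that strong connectivity of $\Gamma\in\gp$ gives $V_R = V$, so $P=D^{-1}W$ is well defined, nonnegative, and irreducible. Perron-Frobenius then guarantees $\phi(i)>0$ for all $i$ and that $\rho(P)=1$ is a simple eigenvalue; in particular $1$ is the unique eigenvalue of $P$ with real part $1$. Let $\Phi:=\mathrm{diag}(\phi(1),\ldots,\phi(n))$ and set $A:=\Phi^{1/2}P\Phi^{-1/2}$. Being similar to $P$, the operator $A$ satisfies $\mathrm{spec}(A)=\mathrm{spec}(P)$, so $\Re[\lambda(A)]=1-\Re[\lambda(\D(\Gamma))]$. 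A short entrywise calculation using Definition~\ref{D54} and the identity $\widetilde{d}_i=2\phi(i)$ from~\rf{D51} yields
\[
S(A)_{ij} \;=\; \frac{1}{2\sqrt{\phi(i)\phi(j)}}\Bigl(\phi(i)\tfrac{w_{ij}}{d_i^{\mathrm{in}}} + \phi(j)\tfrac{w_{ji}}{d_j^{\mathrm{in}}}\Bigr)
\;=\; \frac{\widetilde{w}_{ij}}{\sqrt{\widetilde{d}_i\widetilde{d}_j}},
\]
with diagonal entries vanishing because there are no loops. Hence $S(A)=\widetilde{D}^{-1/2}\widetilde{W}\widetilde{D}^{-1/2} = I - \mathcal{L}(\widetilde{\Gamma})$, and since $\mathcal{L}(\widetilde{\Gamma})$ is similar to $\D(\widetilde{\Gamma})$ via $\widetilde{D}^{1/2}$ we obtain $\mathrm{spec}(S(A))=1-\mathrm{spec}(\D(\widetilde{\Gamma}))$.

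Applying Lemma~\ref{GFan} to $A$ gives the majorization $\lambda(S(A))\prec\Re[\lambda(A)]$ (with entries in increasing order). The $k=1$ partial-sum inequality, $\min\lambda(S(A))\leq\min\Re[\lambda(A)]$, translates to $1-\max_i\lambda_i(\D(\widetilde{\Gamma}))\leq 1-\max_i\Re(\lambda_i(\D(\Gamma)))$, which is the maximum inequality in the theorem. For the minimum inequality, I would observe that the top entry on each side equals $1$ and is simple: on the $S(A)$ side because $\widetilde{\Gamma}$ is a connected undirected graph with nonnegative weights (connectedness is inherited from strong connectivity of $\Gamma$), and on the $\Re[\lambda(A)]$ side by the Perron-Frobenius statement above. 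Combining $a_n=b_n=1$ with the $k=n-1$ majorization inequality forces $a_{n-1}\geq b_{n-1}$, i.e.,
\[
1-\min_{i\neq 0}\lambda_i(\D(\widetilde{\Gamma})) \;\geq\; 1-\min_{i\neq 0}\Re(\lambda_i(\D(\Gamma))),
\]
which is the desired minimum inequality.

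The main obstacle is not the calculation identifying $S(A)$ with $\widetilde{D}^{-1/2}\widetilde{W}\widetilde{D}^{-1/2}$ (that is direct once $\widetilde{d}_i=2\phi(i)$ is in hand), but rather the extraction of the $\min_{i\neq 0}$ estimate from a bare majorization, which a priori only compares extremal eigenvalues. Resolving this requires the full strength of Perron-Frobenius, both to secure the positive Perron vector needed to build the similarity $A=\Phi^{1/2}P\Phi^{-1/2}$ and to ensure simplicity of the top eigenvalue on both sides, thereby legitimating the "peel off the constant" step. The identity $S(A)=\widetilde{D}^{-1/2}\widetilde{W}\widetilde{D}^{-1/2}$ is the conceptual reason why $\widetilde{\Gamma}$, rather than the underlying graph $U(\Gamma)$, is the correct undirected comparison graph in the non-balanced case.
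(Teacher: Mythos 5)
Your proposal is correct, but it follows a genuinely different route from the paper. The paper's proof is variational: it introduces the inner product $(f,g)=\sum_i\widetilde{d}_i\overline{f(i)}g(i)$ weighted by the degrees of $\widetilde{\Gamma}$ (equivalently by the Perron vector, since $\widetilde{d}_i=2\phi(i)$), establishes the identity $(f,\widetilde{\D}f)=(f,f)-\tfrac{1}{2}(f,Pf)-\tfrac{1}{2}(\overline{f},P\overline{f})$, verifies that every eigenfunction $u_k$ of $P$ with $k\neq 0$ is orthogonal to the constant function in this inner product, and then reads off the two extremal inequalities from the Rayleigh-quotient characterization of $\lambda_1(\D(\widetilde{\Gamma}))$ and $\lambda_{n-1}(\D(\widetilde{\Gamma}))$. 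You instead symmetrize the conjugated operator $A=\Phi^{1/2}P\Phi^{-1/2}$, identify $S(A)$ with $\widetilde{D}^{-1/2}\widetilde{W}\widetilde{D}^{-1/2}$ (your entrywise computation is correct, and in fact also handles diagonal entries when loops are present, since $\widetilde{w}_{ii}/\widetilde{d}_i=w_{ii}/d_i^{\mathrm{in}}$), and invoke Ky Fan's Lemma \ref{GFan}. This is exactly the strategy of Theorem \ref{DAB} transplanted to $\widetilde{\Gamma}$, and it actually yields more than the theorem asks for: the full majorization $\lambda(\D(\widetilde{\Gamma}))\prec\Re[\lambda(\D(\Gamma))]$ for strongly connected $\Gamma\in\gp$, of which the stated extremal inequalities are the first and last consequences. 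The paper's variational argument, by contrast, only produces the two extremal bounds but avoids any discussion of peeling entries off a majorization.

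One small correction in your last step: the $k=n-1$ partial-sum inequality combined with $a_n=b_n$ and the total-sum equality is vacuous (it reduces to $S-1\leq S-1$) and does not force $a_{n-1}\geq b_{n-1}$. What you need is the $k=n-2$ inequality $\sum_{i=1}^{n-2}a_i\leq\sum_{i=1}^{n-2}b_i$ together with the total-sum equality, which gives $a_{n-1}+a_n\geq b_{n-1}+b_n$, and then $a_n=b_n=1$ yields $a_{n-1}\geq b_{n-1}$. With that index fixed, your use of Perron--Frobenius (simplicity of the eigenvalue $1$ of $P$, hence $\Re(\gamma_k)<1$ for $k\neq 0$, and connectedness of $\widetilde{\Gamma}$) to identify $a_{n-1}$ and $b_{n-1}$ with $1-\min_{i\neq 0}\lambda_i(\D(\widetilde{\Gamma}))$ and $1-\min_{i\neq 0}\Re(\lambda_i(\D(\Gamma)))$ is sound, and the proof goes through.
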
\begin{proof}For ease of notation we set
$\widetilde{\D}=\D(\widetilde{\Gamma})$ and
$\widetilde{\lambda_i}=\lambda_i(\D(\widetilde{\Gamma}))$. We
consider the inner product for functions $f,g\in
C(\widetilde{V})$,
\[(f,g)=\sum_i\widetilde{d}_i\overline{f(i)}g(i),\] where $\overline{f(i)}$
denotes complex conjugation.  Using \rf{D51}, we obtain the
following identity:
\begin{eqnarray*}
(f,\widetilde{\D}f) &=&
\sum_i\widetilde{d}_i\overline{f(i)}[f(i)-\frac{1}{\widetilde{d}_i}\sum_j\widetilde{w}_{ij}f(j)]\\&=&(f,f)
-
\sum_{i,j}\frac{w_{ij}}{d_i^{\mathrm{in}}}\phi(i)\overline{f(i)}f(j)
-
\sum_{i,j}\frac{w_{ji}}{d_j^{\mathrm{in}}}\phi(j)\overline{f(i)}f(j)
\\&=&
(f,f) -
\sum_{i}\frac{\widetilde{d}_i}{2}\overline{f(i)}\sum_j\frac{w_{ij}}{d_i^{\mathrm{in}}}f(j)
-\sum_{j}\frac{\widetilde{d}_j}{2}f(j)\sum_i\frac{w_{ji}}{d_j^{\mathrm{in}}}\overline{f(i)}\\&=&
(f,f) - \frac{1}{2}(f,P f)-\frac{1}{2}(\overline{f},P\overline{f})
\end{eqnarray*}
Let $u_k$ and $\gamma_k$, $k=0,\ldots,n-1$ be the eigenfunctions
and the corresponding eigenvalues of $P$. Without loss of
generality, we assume that $u_0$ is given by the constant function
$\mathbf{1}=(1,\ldots,1)^\top$ and $\gamma_0=1$. Suppose for the
moment that $(u_k,\mathbf{1})= (u_k,u_0)=0$ for all $k\neq 0$.
Since $\widetilde{\Gamma}\in \gpu$ we can use the usual
variational characterization of the eigenvalues. For all $k\neq 0$
we have
\begin{eqnarray*}
\widetilde{\lambda}_1&=&
\inf_{f\bot\mathbf{1}}\frac{(f,\widetilde{\D}f)}{(f,f)} \leq
\frac{(u_k,\widetilde{\D}u_k)}{(u_k,u_k)}\\&=&
\frac{(u_k,u_k)}{(u_k,u_k)}-\frac{1}{2}\frac{(u_k,P
u_k)}{(u_k,u_k)}-\frac{1}{2}\frac{(\overline{u_k},P
\overline{u_k})}{(u_k,u_k)}\\&=&1 -\frac{1}{2}\gamma_k
-\frac{1}{2}\overline{\gamma_k} = 1-\Re(\gamma_k)= \Re(\lambda_k),
\end{eqnarray*}where we used the fact that if $u_k$ is an eigenfunction for
the eigenvalue $\gamma_k$ then $\bar{u}_k$ is an eigenfunction for
the eigenvalue $\bar{\gamma}_k$. Similarly, we obtain for the
largest eigenvalue $\widetilde{\lambda}_{n-1}$
\begin{eqnarray*}\widetilde{\lambda}_{n-1}= \sup_{f\neq0}\frac{(f,\widetilde{\D}f)}{(f,f)}
\geq \frac{(u_k,\widetilde{\D}u_k)}{(u_k,u_k)}=
 \Re(\lambda_k)\end{eqnarray*} for all $k$.
Therefore, it only remains to show that $(u_k,\mathbf{1})=0$ for
all $k\neq 0$. The Perron-Frobenius Theorem implies that
$\rho=\gamma_0=1$ is a simple eigenvalue of $P$ and hence
$\gamma_k<1$ for all $k\neq 0$. Using \rf{D80} and \rf{D51} we
obtain
\begin{eqnarray*}
(u_k,\mathbf{1}) &=&
\sum_i\widetilde{d}_iu_k(i)\\&=&\sum_i2\phi(i)u_k(i)\\&=&\sum_i2
\sum_j\frac{w_{ji}}{d_j^\mathrm{in}}\phi(j) u_k(i)\\&=& 2\sum_j
\phi(j) \sum_i \frac{w_{ji}}{d_j^\mathrm{in}} u_k(i)\\&=& 2 \sum_j
\phi(j) \gamma_k u_k(j)
\end{eqnarray*}This implies that \[(2-2\gamma_k)\sum_i\phi(i)u_k(i)=0.\]
Since $\gamma_k< 1$ if $k\neq 0$, we conclude that
$\sum_i\phi(i)u_k(i)=0$ and hence $(u_k,\mathbf{1})=0$. This
completes the proof.
\end{proof}
By combining Lemma \ref{Eigenvalues} with Theorem \ref{D53}, we
immediately obtain the following eigenvalue estimates:
\begin{theo}\label{D55}
Let $\Gamma\in\gp$ be a strongly connected graph, then
\[0\leq 1-\sqrt{1-h^2(\widetilde{\Gamma})}\leq \min_{i\neq 0}\Re(\lambda_i(\D(\Gamma)))\leq
\max_i\Re(\lambda_i(\D(\Gamma)))\leq
1+\sqrt{1-(1-\overline{h}(\widetilde{\Gamma}))^2}\leq 2,\] where
$h(\widetilde{\Gamma})$ and $\overline{h}(\widetilde{\Gamma})$ are
the Cheeger constant and the dual Cheeger constant of the graph
$\widetilde{\Gamma}$.
\end{theo}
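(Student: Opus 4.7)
The plan is to derive Theorem \ref{D55} by directly chaining the comparison bound established in Theorem \ref{D53} with the Cheeger and dual Cheeger estimates recorded in Lemma \ref{Eigenvalues}. First I would observe that, by Definition \ref{D54}, the auxiliary graph $\widetilde{\Gamma}$ belongs to $\gpu$: symmetry of the new weights $\widetilde{w}_{ij}$ is built into the construction, and nonnegativity follows from $\Gamma\in\gp$ together with the strict positivity of the Perron vector $\phi$ guaranteed by the Perron--Frobenius theorem for the strongly connected operator $P$. Strong connectedness of $\Gamma$ and the identity $\widetilde{d}_i=2\phi(i)>0$ from \rf{D51} also ensure that $\widetilde{\Gamma}$ has no quasi-isolated vertices, so the Cheeger and dual Cheeger constants $h(\widetilde{\Gamma})$ and $\overline{h}(\widetilde{\Gamma})$ are well-defined and Lemma \ref{Eigenvalues} applies without modification.

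Next I would apply Lemma \ref{Eigenvalues}(i) to $\widetilde{\Gamma}$ to obtain $1-\sqrt{1-h^2(\widetilde{\Gamma})}\leq \lambda_1(\D(\widetilde{\Gamma}))$, i.e.\ a lower bound on $\min_{i\neq 0}\lambda_i(\D(\widetilde{\Gamma}))$, and Lemma \ref{Eigenvalues}(ii) to obtain $\lambda_{n-1}(\D(\widetilde{\Gamma}))\leq 1+\sqrt{1-(1-\overline{h}(\widetilde{\Gamma}))^2}$, i.e.\ an upper bound on $\max_i\lambda_i(\D(\widetilde{\Gamma}))$. Theorem \ref{D53} then provides the bridge back to $\Gamma$: it sandwiches $\min_{i\neq 0}\Re(\lambda_i(\D(\Gamma)))$ from below by $\min_{i\neq 0}\lambda_i(\D(\widetilde{\Gamma}))$ and $\max_i\Re(\lambda_i(\D(\Gamma)))$ from above by $\max_i\lambda_i(\D(\widetilde{\Gamma}))$. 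Concatenating the two chains of inequalities produces the middle two estimates in the statement.

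The outermost inequalities $0\leq 1-\sqrt{1-h^2(\widetilde{\Gamma})}$ and $1+\sqrt{1-(1-\overline{h}(\widetilde{\Gamma}))^2}\leq 2$ are elementary, following at once from $h(\widetilde{\Gamma}), \overline{h}(\widetilde{\Gamma})\in[0,1]$. There is no genuine obstacle here: all of the substantive work has already been carried out either in the proof of Theorem \ref{D53} (the variational argument in which the inner product weighted by the Perron vector symmetrizes $P$ and reduces the problem to an undirected Rayleigh quotient) or in the cited undirected Cheeger estimates \cite{Chung96, BauerJost}. The only point that warrants a brief sentence of verification is that the hypotheses of Theorem \ref{D53} and of Lemma \ref{Eigenvalues} are simultaneously met by $\Gamma$ and $\widetilde{\Gamma}$, which is precisely the content of the first paragraph above.
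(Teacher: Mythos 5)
Your proposal is correct and follows essentially the same route as the paper, which obtains Theorem \ref{D55} immediately by combining Theorem \ref{D53} with Lemma \ref{Eigenvalues} applied to $\widetilde{\Gamma}\in\gpu$. Your extra verification that $\widetilde{\Gamma}$ is a legitimate member of $\gpu$ (symmetry, nonnegativity via the positive Perron vector, no quasi-isolated vertices) and that the outer bounds follow from $h(\widetilde{\Gamma}),\overline{h}(\widetilde{\Gamma})\in[0,1]$ is consistent with, and slightly more explicit than, what the paper states.
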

\begin{rem}
The estimates in Theorem \ref{DAB} are in particular true for
graphs with both positive and negative weights. In contrast, the
estimates in Theorem \ref{D53} only hold for graphs with
nonnegative weights. However, the assumption in Theorem \ref{D53}
that the graph is strongly connected is weaker than the assumption
in Theorem \ref{DAB} that the graph is balanced. Indeed, it is
easy to show that every balanced graph is strongly connected but
not vice versa.
\end{rem}
\subsection{Further eigenvalue estimates}
In the last section, we derived eigenvalue estimates for directed
graphs by using different comparison theorems for directed and
undirected graphs. In this section, we prove further eigenvalue
estimates that do not make use of comparison theorems. By
considering the trace of $\D^2$, we obtain estimates for the
absolute values of the real and imaginary part of the eigenvalues.
\begin{theo} Let $\Gamma \in\g$ be a graph. Then,
\begin{eqnarray*}\min_{i:\lambda_i\neq 0}|\Re(\lambda_i)|&\leq& \sqrt{ \frac{|V_R|
+\sum_{i\in V_R}\left(\frac{w_{ii}^2}{(d^{\mathrm{in}}_i)^2}-
2\frac{w_{ii}}{d^{\mathrm{in}}_i}\right)+ 2\sum_{(i,j)\in
U}\left(\frac{w_{ij}w_{ji}}{d^{\mathrm{in}}_id^{\mathrm{in}}_j}\right)+
\sum_{i=m_0}^{n-1} \Im(\lambda_i)^2}{n-m_0}}\\ &\leq&
\max_i|\Re(\lambda_i)|\end{eqnarray*} where $U\subseteq V_R\times
V_R$ is the set of distinct mutually connected vertices that are
not quasi-isolated, i.~e.~$(i,j)\in U$, if $i\neq j$, and
$d_i^{\mathrm{in}}, d_j^{\mathrm{in}}, w_{ij}, w_{ji}\neq 0$. As
before, $m_0$ denotes the multiplicity of the eigenvalue zero of
$\D$.
\end{theo}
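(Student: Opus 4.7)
The plan is to compute $\mathrm{tr}(\D^2)$ directly from the matrix entries of $\D$, equate it to $\sum_i \lambda_i^2$, split real and imaginary parts, and then squeeze the resulting average of $|\Re(\lambda_i)|^2$ between its minimum and maximum.

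First I would compute the trace of $\D^2$ entry-by-entry. Since $\D_{ji}=0$ whenever $j\in V\setminus V_R$, and $\D_{ii}=0$ for $i\in V\setminus V_R$, only indices in $V_R$ contribute to $\mathrm{tr}(\D^2)=\sum_{i\in V_R}\sum_{j\in V_R}\D_{ij}\D_{ji}$. Using $\D_{ij}=\delta_{ij}-w_{ij}/d_i^{\mathrm{in}}$ for $i,j\in V_R$, the diagonal term $j=i$ contributes $\bigl(1-w_{ii}/d_i^{\mathrm{in}}\bigr)^2 = 1 - 2w_{ii}/d_i^{\mathrm{in}} + w_{ii}^2/(d_i^{\mathrm{in}})^2$, while the off-diagonal term $j\neq i$ contributes $w_{ij}w_{ji}/(d_i^{\mathrm{in}}d_j^{\mathrm{in}})$. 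Summing and noting that the symmetric quantity $w_{ij}w_{ji}/(d_i^{\mathrm{in}}d_j^{\mathrm{in}})$ appears once for each ordered pair, hence twice for each unordered pair in $U$, yields
\[
\mathrm{tr}(\D^2)=|V_R|+\sum_{i\in V_R}\!\left(\frac{w_{ii}^2}{(d_i^{\mathrm{in}})^2}-2\frac{w_{ii}}{d_i^{\mathrm{in}}}\right)+2\!\sum_{(i,j)\in U}\!\frac{w_{ij}w_{ji}}{d_i^{\mathrm{in}}d_j^{\mathrm{in}}}.
\]

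Next I would use $\mathrm{tr}(\D^2)=\sum_{i=0}^{n-1}\lambda_i^2$. Since $\D$ is a real matrix its trace is real, so separating real and imaginary parts of $\lambda_i^2=\Re(\lambda_i)^2-\Im(\lambda_i)^2+2i\Re(\lambda_i)\Im(\lambda_i)$ (and noting that by Proposition~\ref{DBasic}(ii) the imaginary parts cancel across conjugate pairs) gives
\[
\sum_{i=0}^{n-1}\Re(\lambda_i)^2=\mathrm{tr}(\D^2)+\sum_{i=0}^{n-1}\Im(\lambda_i)^2.
\]
The $m_0$ zero eigenvalues contribute nothing to either side, so the nonzero eigenvalues satisfy $\sum_{i:\lambda_i\neq 0}\Re(\lambda_i)^2=\mathrm{tr}(\D^2)+\sum_{i=m_0}^{n-1}\Im(\lambda_i)^2$.

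Finally I would divide by $n-m_0$, the number of nonzero eigenvalues, and apply the elementary fact that for any finite collection of nonnegative reals the arithmetic mean is sandwiched between the minimum and the maximum. This gives
\[
\min_{i:\lambda_i\neq 0}|\Re(\lambda_i)|^2\;\leq\;\frac{1}{n-m_0}\sum_{i:\lambda_i\neq 0}\Re(\lambda_i)^2\;\leq\;\max_{i}|\Re(\lambda_i)|^2,
\]
and taking square roots yields the claim. The only delicate step is the trace computation, specifically making sure that the quasi-isolated vertices do not contribute (which is taken care of by the vanishing of the corresponding rows of $\D$) and that the off-diagonal sum is correctly recast as a sum over the unordered pairs in $U$; after that, the argument is a mechanical application of the trace identity and a mean inequality.
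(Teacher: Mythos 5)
Your proposal is correct and follows essentially the same route as the paper: compute $\mathrm{Tr}(\D^2)=\mathrm{Tr}(\D_R^2)$ explicitly (your entry-by-entry computation is just the expanded form of the paper's $\mathrm{Tr}(I_R)-2\mathrm{Tr}(D_R^{-1}W_R)+\mathrm{Tr}((D_R^{-1}W_R)^2)$), equate it with $\sum_i\lambda_i^2$ using the conjugate-pair symmetry, and sandwich the resulting average of $\Re(\lambda_i)^2$ over the $n-m_0$ nonzero eigenvalues between the squared minimum and maximum. No gaps; the handling of quasi-isolated vertices and the factor $2$ over unordered pairs in $U$ matches the paper's argument.
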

Note that for undirected graphs, the set $U$ is a subset of the
edge set $E$. In particular, if $V_R=V$, and there are no loops in
the graph then $U=E$.
\begin{proof} First, we note that the trace of $\D^2$
satisfies
\begin{equation} \label{D3} \mathrm{Tr}\left(\D^2\right) =\mathrm{Tr}\left(\D_R^2\right)=
\sum_{i=0}^{n-1}\lambda_i^2 = \sum_{i=m_0}^{n-1}\lambda_i^2 =
\sum_{i=m_0}^{n-1}\Re(\lambda_i)^2 -\sum_{i=m_0}^{n-1}
\Im(\lambda_i)^2,\end{equation} where the last equality in
(\ref{D3}) follows from the observation that the eigenvalues
appear in complex conjugate pairs.  An immediate consequence of
Eq.~(\ref{D3}) is:
\begin{equation} \label{D6} (n-m_0)\left(\min_{i:\lambda_i\neq0}|\Re(\lambda_i)|\right)^2 \leq
\mathrm{Tr}\left(\D_R^2\right) + \sum_{i=m_0}^{n-1}
\Im(\lambda_i)^2 \leq
(n-m_0)\left(\max_{i}|\Re(\lambda_i)|\right)^2
\end{equation}On the other hand, the
trace of $\D_R^2$ is given by:
\begin{eqnarray}\label{D4}
\mathrm{Tr}\left(\D_R^{2}\right)&=& \mathrm{Tr}(I_R) -
2\mathrm{Tr}(D_R^{-1}W_R) +\mathrm{Tr}((D_R^{-1}W_R)^2)\nonumber
\\&=& |V_R|-2\sum_{i\in V_R}\frac{w_{ii}}{d^{\mathrm{in}}_i}
+ \sum_{i\in V_R}\left(\frac{w_{ii}}{d^{\mathrm{in}}_i}\right)^2+
\sum_{i,j\in V_R, i\neq
j}\frac{w_{ij}}{d^{\mathrm{in}}_i}\frac{w_{ji}}{d^{\mathrm{in}}_j}\nonumber
\\&=& |V_R| -2\sum_{i\in V_R}\frac{w_{ii}}{d^{\mathrm{in}}_i}+ \sum_{i\in V_R}\left(\frac{w_{ii}}{d^{\mathrm{in}}_i}\right)^2+
 2\sum_{(i,j) \in U}\frac{w_{ij}}{d^{\mathrm{in}}_i}\frac{w_{ji}}{d^{\mathrm{in}}_j}.\end{eqnarray}
 Combining
(\ref{D6}) and (\ref{D4}) completes the proof.
\end{proof}

From this theorem, we can derive interesting special cases.
\begin{coro}\label{D126} If there are no loops and no mutually connected vertices in $V_R$,
i.~e.~ $w_{ii}=0$ for all $i$, and  $U = \emptyset$, then
\[\min_{i:\lambda_i\neq 0}|\Re(\lambda_i)|\leq \sqrt{\frac{ |V_R| +
\sum_{i=m_0}^{n-1} \Im(\lambda_i)^2}{n-m_0}} \leq
\max_i|\Re(\lambda_i)|.
\]
\end{coro}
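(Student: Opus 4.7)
The plan is to derive this corollary as an immediate specialization of the preceding theorem, substituting the two hypotheses into the general bound.

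The preceding theorem gives, for any $\Gamma\in\g$,
\[
\min_{i:\lambda_i\neq 0}|\Re(\lambda_i)| \leq \sqrt{\frac{|V_R| + A + B + \sum_{i=m_0}^{n-1}\Im(\lambda_i)^2}{n-m_0}} \leq \max_i|\Re(\lambda_i)|,
\]
where $A := \sum_{i\in V_R}\!\left(\frac{w_{ii}^2}{(d_i^{\mathrm{in}})^2} - 2\frac{w_{ii}}{d_i^{\mathrm{in}}}\right)$ collects the loop contributions to $\mathrm{Tr}(\D_R^2)$ coming from the diagonal of $D_R^{-1}W_R$ and its square, and $B := 2\sum_{(i,j)\in U}\frac{w_{ij}w_{ji}}{d_i^{\mathrm{in}}d_j^{\mathrm{in}}}$ collects the off-diagonal contributions to $\mathrm{Tr}((D_R^{-1}W_R)^2)$ arising from mutually connected pairs.

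First I would invoke the hypothesis $w_{ii}=0$ for all $i$: every summand in $A$ vanishes termwise since both $w_{ii}^2/(d_i^{\mathrm{in}})^2$ and $w_{ii}/d_i^{\mathrm{in}}$ equal zero, so $A=0$. Next I would invoke $U=\emptyset$: the sum defining $B$ is empty, so $B=0$. Substituting $A=B=0$ into the general bound yields exactly the claimed inequality
\[
\min_{i:\lambda_i\neq 0}|\Re(\lambda_i)| \leq \sqrt{\frac{|V_R| + \sum_{i=m_0}^{n-1}\Im(\lambda_i)^2}{n-m_0}} \leq \max_i|\Re(\lambda_i)|.
\]

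There is no genuine obstacle here; the only bookkeeping point is to confirm that under the stated hypotheses the two terms indeed drop out of the trace computation \rf{D4} used to prove the theorem, which they do for the reasons above. Thus the proof is a one-line specialization of the preceding theorem.
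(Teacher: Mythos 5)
Your specialization is correct and is exactly how the paper obtains this corollary: the paper gives no separate proof, treating it as an immediate special case of the preceding theorem in which the loop term and the sum over $U$ vanish under the hypotheses $w_{ii}=0$ and $U=\emptyset$. Nothing is missing.
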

\begin{coro}\label{D127}Let $\Gamma$ be a loopless, undirected, unweighted, and regular graph,
i.~e.~$w_{ij} \in \{0,1\}$, $w_{ij} = w_{ji}$, and  $d_i =
\sum_jw_{ij} = k \;,\forall\, i\in V$, then
\[ \min_{i\neq 0}\lambda_i \leq \sqrt{\frac{|V| + \frac{2}{k^2}|E|}{n-1}}= \sqrt{\frac{n(k +1)}{({n-1})k}} \leq
 \max_i\lambda_i.\]
\end{coro}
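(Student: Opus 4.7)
The plan is to apply the general Theorem preceding Corollary \ref{D126} directly, rather than Corollary \ref{D126} itself. Corollary \ref{D126} requires $U=\emptyset$, which is incompatible with undirectedness: every edge $\{i,j\}$ makes $(i,j)$ a mutually connected pair, so $U\neq\emptyset$ here.

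I specialize the hypotheses one by one. Looplessness gives $w_{ii}=0$ for all $i$, killing both $w_{ii}$ terms inside the square root. Regularity with degree $k>0$ gives $d_i^{\mathrm{in}}=k$ for every $i$, so $V_R=V$, $|V_R|=n$, and (by the remark immediately following the general theorem) $U$ equals the edge set $E$. Since $\Gamma$ is undirected with nonnegative weights, $\D$ is similar to a symmetric matrix via $D^{1/2}$, so all eigenvalues are real and nonnegative, giving $\Im(\lambda_i)=0$ and $|\Re(\lambda_i)|=\lambda_i$. Taking $\Gamma$ connected, $m_0=1$, hence $n-m_0=n-1$.

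Plugging in, each pair $(i,j)\in U=E$ contributes $w_{ij}w_{ji}/(d_i^{\mathrm{in}}d_j^{\mathrm{in}})=1/k^2$, so the $U$-sum reduces to $2|E|/k^2$. The theorem then collapses to
\[\min_{i\neq 0}\lambda_i\leq\sqrt{\frac{n+2|E|/k^2}{n-1}}\leq\max_i\lambda_i.\]
Finally, the handshake identity $2|E|=nk$ for $k$-regular graphs gives $2|E|/k^2=n/k$, whence $n+n/k=n(k+1)/k$, producing the closed form $\sqrt{n(k+1)/((n-1)k)}$ claimed.

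The argument involves no serious obstacle; it is a clean specialization. The main care points are to invoke the general theorem rather than Corollary \ref{D126} (whose hypothesis $U=\emptyset$ fails for an undirected graph) and to read off $U=E$ from the remark that follows the general theorem before applying the handshake lemma.
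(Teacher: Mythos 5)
Your proof is correct and is exactly the specialization the paper intends: the corollary is stated without proof as an instance of the preceding trace-of-$\D^2$ theorem, and you correctly plug in $w_{ii}=0$, $V_R=V$, $U=E$, real spectrum, $m_0=1$, and the handshake identity $2|E|=nk$. Your observation that one must use the general theorem rather than Corollary \ref{D126} (since $U=E\neq\emptyset$ for an undirected graph) is a correct and worthwhile reading of the statement.
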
The next example shows that this estimate is sharp for
complete graphs.
\begin{example} For a complete graph on $n$ vertices the estimate
in Corollary \ref{D127} yields
\[\min_{i\neq 0}\lambda_i \leq \frac{n}{{n-1}} \leq
\max_i\lambda_i.\] On the other hand, all non-zero eigenvalues of
a complete graph are given by $\frac{n}{n-1}$. Hence, the estimate
in Corollary \ref{D127} is sharp for complete graphs.
\end{example}

In the same way, we can obtain bounds for the absolute values of
the imaginary parts.
\begin{theo}
\begin{eqnarray*}\min_{i:\lambda_i\neq 0}|\Im(\lambda_i)|
&\leq& \sqrt{\frac{\sum_{i=m_0}^{n-1}\Re(\lambda_i)^2
-2\sum_{(i,j)\in
U}\left(\frac{w_{ij}w_{ji}}{d^{\mathrm{in}}_id^{\mathrm{in}}_j}\right)
+\sum_{i\in
V_R}\left(2\frac{w_{ii}}{d^{\mathrm{in}}_i}-\frac{w_{ii}^2}{(d^{\mathrm{in}}_i)^2}\right)-|V_R|}{n-m_0}}
\\&\leq& \max_i|\Im(\lambda_i)|
\end{eqnarray*}
\end{theo}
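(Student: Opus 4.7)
The plan is to mirror the argument used for the previous theorem, but to isolate $\sum \Im(\lambda_i)^2$ rather than $\sum \Re(\lambda_i)^2$. The starting point is identity (\ref{D3}), which states that
\[
\mathrm{Tr}(\Delta^2) \;=\; \mathrm{Tr}(\Delta_R^2) \;=\; \sum_{i=m_0}^{n-1} \Re(\lambda_i)^2 \;-\; \sum_{i=m_0}^{n-1} \Im(\lambda_i)^2,
\]
which comes from the fact that eigenvalues occur in complex conjugate pairs. Rearranging, I get
\[
\sum_{i=m_0}^{n-1} \Im(\lambda_i)^2 \;=\; \sum_{i=m_0}^{n-1} \Re(\lambda_i)^2 \;-\; \mathrm{Tr}(\Delta_R^2).
\]

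First I would substitute the explicit formula for $\mathrm{Tr}(\Delta_R^2)$ from (\ref{D4}), namely
\[
\mathrm{Tr}(\Delta_R^2) = |V_R| - 2\sum_{i\in V_R}\frac{w_{ii}}{d_i^{\mathrm{in}}} + \sum_{i\in V_R}\left(\frac{w_{ii}}{d_i^{\mathrm{in}}}\right)^2 + 2\sum_{(i,j)\in U}\frac{w_{ij}w_{ji}}{d_i^{\mathrm{in}} d_j^{\mathrm{in}}},
\]
which gives a closed-form expression for the sum of squares of the imaginary parts that matches the numerator under the square root in the statement.

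Next I would sandwich this sum between $(n-m_0)$ times the squared minimum and squared maximum. Because $\Im(\lambda_i)=0$ whenever $\lambda_i = 0$, the eigenvalues contributing nontrivially to $\sum_{i=m_0}^{n-1} \Im(\lambda_i)^2$ are exactly the nonzero ones (which is why the min is taken over $i$ with $\lambda_i\neq 0$ - otherwise the lower bound would be trivial), and there are $n-m_0$ of them, giving
\[
(n-m_0)\,\Big(\min_{i:\lambda_i\neq 0}|\Im(\lambda_i)|\Big)^2 \;\leq\; \sum_{i=m_0}^{n-1} \Im(\lambda_i)^2 \;\leq\; (n-m_0)\,\Big(\max_i |\Im(\lambda_i)|\Big)^2.
\]
Dividing by $n-m_0$ and taking square roots yields the two inequalities in the statement.

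There is essentially no obstacle: the argument is the direct analogue of the proof of the preceding theorem, with the roles of $\sum \Re^2$ and $\sum \Im^2$ interchanged. The only point that requires a moment's care is the restriction of the minimum to nonzero eigenvalues, which is needed precisely because including the $m_0$ zero eigenvalues would force the minimum of $|\Im(\lambda_i)|$ to be zero and collapse the lower bound.
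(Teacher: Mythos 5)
Your proposal is correct and is exactly the argument the paper intends: the paper proves this theorem only by the remark ``in the same way,'' and your rearrangement of (\ref{D3}) to isolate $\sum_{i=m_0}^{n-1}\Im(\lambda_i)^2$, the substitution of the trace formula (\ref{D4}), and the sandwich by $(n-m_0)$ times the squared min and max reproduce that intended proof. No gaps.
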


We obtain the following special case:
\begin{coro} If there are no loops and no mutually connected vertices in $V_R$,
i.~e.~$w_{ii}=0$ for all $i$, and $U = \emptyset$, then
\[\min_{i:\lambda_i\neq 0}|\Im(\lambda_i)|
\leq \sqrt{\frac{\sum_{i=m_0}^{n-1} \Re(\lambda_i)^2-
|V_R|}{n-m_0}} \leq \max_i|\Im(\lambda_i)|.
\]
\end{coro}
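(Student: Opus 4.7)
The plan is to deduce this corollary directly from the preceding theorem by substituting the hypotheses into the general bound. Concretely, the theorem asserts
\[\min_{i:\lambda_i\neq 0}|\Im(\lambda_i)|
\leq \sqrt{\frac{\sum_{i=m_0}^{n-1}\Re(\lambda_i)^2
-2\sum_{(i,j)\in U}\frac{w_{ij}w_{ji}}{d^{\mathrm{in}}_id^{\mathrm{in}}_j}
+\sum_{i\in V_R}\left(2\frac{w_{ii}}{d^{\mathrm{in}}_i}-\frac{w_{ii}^2}{(d^{\mathrm{in}}_i)^2}\right)-|V_R|}{n-m_0}}
\leq \max_i|\Im(\lambda_i)|,\]
so the task reduces to checking that, under the stated assumptions, the two sums in the numerator that are not of the form $\sum \Re(\lambda_i)^2$ or $-|V_R|$ drop out.

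First, I would observe that the hypothesis $w_{ii}=0$ for all $i\in V$ makes every summand of $\sum_{i\in V_R}\left(2\tfrac{w_{ii}}{d^{\mathrm{in}}_i}-\tfrac{w_{ii}^2}{(d^{\mathrm{in}}_i)^2}\right)$ equal to zero (for $i\in V_R$ the denominators $d^{\mathrm{in}}_i$ are nonzero, so the expression is well-defined and vanishes term-by-term). Second, the hypothesis $U=\emptyset$ means the index set of $2\sum_{(i,j)\in U}\tfrac{w_{ij}w_{ji}}{d^{\mathrm{in}}_id^{\mathrm{in}}_j}$ is empty, so this sum is zero by convention.

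Substituting both zeros into the theorem's inequality collapses the numerator to $\sum_{i=m_0}^{n-1}\Re(\lambda_i)^2 - |V_R|$, which gives precisely
\[\min_{i:\lambda_i\neq 0}|\Im(\lambda_i)|
\leq \sqrt{\frac{\sum_{i=m_0}^{n-1}\Re(\lambda_i)^2 - |V_R|}{n-m_0}}
\leq \max_i|\Im(\lambda_i)|,\]
as claimed. There is no real obstacle here; the only thing worth being careful about is that the sum $\sum_{i\in V_R}(\cdots)$ ranges only over $V_R$, which is exactly the set on which the $w_{ii}/d^{\mathrm{in}}_i$ terms are meaningful, so the vanishing of $w_{ii}$ on all of $V$ in particular forces vanishing on $V_R$ and no division-by-zero issue arises.
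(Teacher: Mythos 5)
Your proposal is correct and is exactly the argument the paper intends: the corollary is stated as an immediate specialization of the preceding theorem, obtained by noting that the loop terms and the sum over $U$ vanish under the hypotheses $w_{ii}=0$ and $U=\emptyset$. Nothing further is needed.
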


\section{Neighborhood graphs}\label{D76}
In \cite{BauerJost} we introduced the concept of neighborhood
graphs for undirected graphs $\Gamma\in \gpu$. Here, we generalize
this concept to directed graphs $\Gamma\in \g$ without
quasi-isolated vertices. As already mentioned above, for the
concept of neighborhood graphs it is crucial to study graphs with
loops. Hence, we will consider graphs with loops in this section.

\begin{defi}
Let $\Gamma =(V,E)\in \g$  and assume that
$d^{\mathrm{in}}_i\neq0$ for all $i\in V$. The neighborhood graph
$\Gamma[l] = (V,E[l])$ of order $l\geq2$ is the graph on the same
vertex set $V$ and its edge set $E[l]$ is defined in the following
way: The weight $w_{ij}[l]$ of the edge from vertex $j$ to vertex
$i$ in $\Gamma[l]$ is given by
\[w_{ij}[l] = \sum_{k_1,\ldots, k_{l-1}}\frac{1}{d^{\mathrm{in}}_{k_1}}\ldots
\frac{1}{d^{\mathrm{in}}_{k_{l-1}}}w_{ik_1}w_{k_1k_2}\ldots
w_{k_{l-1}j}.\]In particular, $j$ is a neighbor of $i$  in
$\Gamma[l]$ if there exists at least one directed path of length
$l$ from $j$ to $i$ in $\Gamma$.
\end{defi}
Another way to look at the neighborhood graph is the following.
The neighborhood graph $\overline{\Gamma}[l]$ of the reversal
graph $\overline{\Gamma}$ encodes the transition probabilities of
a $l$-step random walk on $\overline{\Gamma}$. For a more detailed
discussion of this probabilistic point of view, we refer the
reader to \cite{BJL}.

The neighborhood graph $\Gamma[l]$ has the following properties:
\begin{lemma}\begin{itemize}\item[]
\item[(i)] The in-degrees of the vertices in $\Gamma$ and
$\Gamma[l]$ satisfy
\[d^{\mathrm{in}}_i = d^{\mathrm{in}}_i[l] \quad \forall i\in V \mbox{ and } l\geq 2.\]
\item[(ii)] If $\Gamma$ is balanced, then so is $\Gamma[l]$ and
the out-degrees of the vertices in  $\Gamma$ and $\Gamma[l]$
satisfy
\[d_i^{\mathrm{out}}= d_i^{\mathrm{out}}[l]
\quad \forall i\in V \mbox{ and } l\geq 2.\]
\end{itemize}
\end{lemma}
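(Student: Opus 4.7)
The plan is to prove both parts by telescoping the innermost sums defining $w_{ij}[l]$ against the very definition of in-/out-degree, exploiting the factor-by-factor structure of the product
\[
w_{ij}[l] \;=\; \sum_{k_1,\dots,k_{l-1}} \frac{w_{ik_1}}{d^{\mathrm{in}}_{k_1}} \cdot \frac{w_{k_1k_2}}{d^{\mathrm{in}}_{k_2}} \cdots \frac{w_{k_{l-2}k_{l-1}}}{d^{\mathrm{in}}_{k_{l-1}}}\, w_{k_{l-1}j}.
\]
Note that there are $l$ un-normalized weights and $l-1$ in-degree denominators, and the denominators are exactly attached to the intermediate vertices $k_1,\dots,k_{l-1}$, not to the endpoints $i,j$. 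This asymmetry is what will drive both computations.

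For part (i), I would compute $d_i^{\mathrm{in}}[l] = \sum_j w_{ij}[l]$ by pulling the $j$-sum inside to act only on the last factor $w_{k_{l-1}j}$. By definition $\sum_j w_{k_{l-1}j} = d^{\mathrm{in}}_{k_{l-1}}$, which cancels the denominator $d^{\mathrm{in}}_{k_{l-1}}$. The result is again a product of the same shape but with $l-1$ weights and $l-2$ denominators, ending in $w_{k_{l-2}k_{l-1}}$. Summing over $k_{l-1}$ again produces $d^{\mathrm{in}}_{k_{l-2}}$ which cancels the next denominator, and so on. After $l-1$ such telescopings what remains is $\sum_{k_1} w_{ik_1} = d_i^{\mathrm{in}}$. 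A clean way to present this is by induction on $l \geq 2$, using the identity $w_{ij}[l] = \sum_k \frac{w_{ik}}{d^{\mathrm{in}}_k} w_{kj}[l-1]$ (with the convention $w_{kj}[1] = w_{kj}$), which immediately gives $d_i^{\mathrm{in}}[l] = \sum_k \frac{w_{ik}}{d^{\mathrm{in}}_k}\, d_k^{\mathrm{in}}[l-1]$ and reduces to $d_i^{\mathrm{in}}$ by the inductive hypothesis.

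For part (ii), I would run the analogous telescoping, but now from the \emph{other end}. We have $d_i^{\mathrm{out}}[l] = \sum_j w_{ji}[l]$, where the $j$-sum acts on the \emph{first} factor $w_{jk_1}$. Here $\sum_j w_{jk_1} = d_{k_1}^{\mathrm{out}}$, which is not immediately one of the in-degree denominators; but under the balanced assumption $d_{k_1}^{\mathrm{out}} = d_{k_1}^{\mathrm{in}}$, so it cancels the denominator $d^{\mathrm{in}}_{k_1}$. Iterating: after summing over $k_1$ the factor $\sum_{k_1} w_{k_1 k_2}$ produces $d_{k_2}^{\mathrm{out}} = d_{k_2}^{\mathrm{in}}$ (again using balancedness), cancelling the next denominator, and so on until $\sum_{k_{l-1}} w_{k_{l-1}i} = d_i^{\mathrm{out}}$ remains. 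Then balancedness of $\Gamma[l]$ follows immediately from $d_i^{\mathrm{in}}[l] = d_i^{\mathrm{in}} = d_i^{\mathrm{out}} = d_i^{\mathrm{out}}[l]$.

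There is no real obstacle here; the whole thing is bookkeeping. The only point that requires a moment of attention is (ii), where the balanced hypothesis must be used at \emph{every} telescoping step, not just once, because each intermediate sum turns an out-degree into an in-degree in order to cancel a denominator. The cleanest presentation is to fix once and for all the inductive step $w_{ij}[l] = \sum_k \frac{w_{ik}}{d^{\mathrm{in}}_k} w_{kj}[l-1] = \sum_k w_{ik}[l-1]\frac{w_{kj}}{d^{\mathrm{in}}_k}$ (both factorizations hold by reindexing the sum) and apply the first for (i) and the second, combined with balancedness, for (ii).
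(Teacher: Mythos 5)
Your proof is correct and follows essentially the same route as the paper: telescoping the sum defining $w_{ij}[l]$ from the last index for (i) using $\sum_j w_{kj}=d_k^{\mathrm{in}}$, and from the first index for (ii) using $\sum_j w_{jk}=d_k^{\mathrm{out}}=d_k^{\mathrm{in}}$ (balancedness invoked at each cancellation), then deducing that $\Gamma[l]$ is balanced from the two degree identities. Your inductive repackaging via $w_{ij}[l]=\sum_k \frac{w_{ik}}{d^{\mathrm{in}}_k}w_{kj}[l-1]=\sum_k w_{ik}[l-1]\frac{w_{kj}}{d^{\mathrm{in}}_k}$ is just a tidier presentation of the same telescoping.
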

\begin{proof}$(i)$ We have \begin{eqnarray*}d^{\mathrm{in}}_i[l]&=&\sum_jw_{ij}[l]
 = \sum_{k_1,\ldots, k_{l-1}}\frac{1}{d^{\mathrm{in}}_{k_1}}\ldots
\frac{1}{d^{\mathrm{in}}_{k_{l-1}}}w_{ik_1}w_{k_1k_2}\ldots
w_{k_{l-2}k_{l-1}}\sum_jw_{k_{l-1}j}\\&=&\sum_{k_1,\ldots,
k_l-2}\frac{1}{d^{\mathrm{in}}_{k_1}}\ldots
\frac{1}{d^{\mathrm{in}}_{k_{l-2}}}w_{ik_1}w_{k_1k_2}\ldots
\sum_{k_{l-1}}w_{k_{l-2}k_{l-1}}\\&\vdots&\\&=&\sum_{k_1}
w_{ik_1}=d^{\mathrm{in}}_i.\end{eqnarray*} $(ii)$ Since $\Gamma$
is balanced, we have $d_i^{\mathrm{out}}=d_i^{\mathrm{in}}$ for
all $i\in V$ and thus
\begin{eqnarray*}d_i^{\mathrm{out}}[l] = \sum_jw_{ji}[l] &=&
\sum_{k_1,\ldots,k_{l-1}}\frac{1}{d^{\mathrm{in}}_{k_1}}\ldots
\frac{1}{d^{\mathrm{in}}_{k_{l-1}}}w_{k_1k_2}\ldots
w_{k_{l-2}k_{l-1}}w_{k_{l-1}i}\sum_jw_{jk_1}\\&=&
\sum_{k_2,\ldots,k_{l-1}}\frac{d^{\mathrm{out}}_{k_1}}{d^{\mathrm{in}}_{k_1}}\frac{1}{d^{\mathrm{in}}_{k_2}}\ldots
\frac{1}{d^{\mathrm{in}}_{k_{l-1}}}w_{k_2k_3}\ldots
w_{k_{l-2}k_{l-1}}w_{k_{l-1}i}\sum_{k_1}w_{k_1k_2}\\&\vdots&\\&=&\sum_{k_{l-1}}w_{k_{l-1}i}=
d_i^{\mathrm{out}}.
\end{eqnarray*} Consequently, if $\Gamma$ is balanced, then we have for all $i$, $d_i^\mathrm{in}[l]=
d_i^\mathrm{in}= d_i^\mathrm{out}=d_i^\mathrm{out}[l]$ and hence
$\Gamma[l]$ is balanced.
\end{proof}

The next theorem establishes the relationship between $\D$ and
$\D[l]$.
\begin{theo}\label{D142} We have
\be \label{D143} I-(I-\Delta)^{l}= I-P^{l}= \Delta[l],\qe where
$\Delta[l]$ is the graph Laplace operator on $\Gamma[l]$ and
$\Delta$ is the graph Laplace operator on $\Gamma$.
\end{theo}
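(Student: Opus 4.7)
The first equality is immediate from the definition $P := I - \Delta$, so the entire content lies in verifying that $P^{l} = I - \Delta[l]$, equivalently
\[
P^{l} v(i) \;=\; \frac{1}{d^{\mathrm{in}}_{i}} \sum_{j} w_{ij}[l]\, v(j)
\]
for every $v \in C(V)$ and every $i \in V$ (recall the standing assumption of this section that no vertex is quasi-isolated, so $d^{\mathrm{in}}_{i} \neq 0$).

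My plan is to prove this by induction on $l$. The base case $l=1$ is simply the definition of $P$. For the inductive step, starting from $P^{l} v(i) = \frac{1}{d^{\mathrm{in}}_{i}} \sum_{k_{1}} w_{ik_{1}} P^{l-1} v(k_{1})$ and inserting the inductive hypothesis for $P^{l-1} v(k_{1})$, one obtains
\[
P^{l} v(i) \;=\; \sum_{j} \sum_{k_{1},\dots,k_{l-1}} \frac{w_{ik_{1}}}{d^{\mathrm{in}}_{i}} \cdot \frac{w_{k_{1}k_{2}}}{d^{\mathrm{in}}_{k_{1}}} \cdots \frac{w_{k_{l-1}j}}{d^{\mathrm{in}}_{k_{l-1}}}\, v(j).
\]
Collecting the inner sum over $k_{1},\dots,k_{l-1}$ and comparing with the definition of $w_{ij}[l]$ gives exactly $P^{l} v(i) = \frac{1}{d^{\mathrm{in}}_{i}} \sum_{j} w_{ij}[l] \, v(j)$, which is what we need.

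Finally, to conclude that the right-hand side is $(I - \Delta[l]) v(i)$, I invoke part $(i)$ of the preceding lemma, namely $d^{\mathrm{in}}_{i}[l] = d^{\mathrm{in}}_{i}$, so that the normalization in $\Delta[l]$ matches. Rearranging yields $\Delta[l] v(i) = v(i) - P^{l} v(i) = (I - P^{l}) v(i) = (I - (I-\Delta)^{l}) v(i)$, completing the proof.

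I do not expect any genuine obstacle: the argument is a bookkeeping induction, and the only non-cosmetic ingredient is the identity $d^{\mathrm{in}}_{i}[l] = d^{\mathrm{in}}_{i}$ from the preceding lemma, which allows the prefactor $1/d^{\mathrm{in}}_{i}$ arising from expanding $P^{l}$ to be reinterpreted as $1/d^{\mathrm{in}}_{i}[l]$ inside $\Delta[l]$. The only point requiring a little care is indexing the telescoped product correctly so that the summation variables in $P^{l} v(i)$ align with those defining $w_{ij}[l]$.
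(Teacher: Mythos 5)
Your proof is correct. The paper itself omits the argument, remarking only that it is "essentially the same as the proof given in [BauerJost] for undirected graphs"; the induction you give — expanding $P^{l}v(i)$, matching the telescoped product against the definition of $w_{ij}[l]$, and invoking $d^{\mathrm{in}}_{i}[l]=d^{\mathrm{in}}_{i}$ to identify the normalization in $\Delta[l]$ — is exactly the computation being deferred to, so you have simply supplied the omitted details.
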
The proof is essentially the same as the proof given in
\cite{BauerJost} for undirected graphs. So we omit the details
here.
\begin{coro}The multiplicity $m_1$ of the eigenvalue one
is an invariant for all neighborhood graphs, i.~e.~$m_1(\Delta) =
m_1(\Delta[l])$ for all $l\geq 2$.
\end{coro}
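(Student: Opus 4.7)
The plan is to deduce the corollary directly from Theorem \ref{D142} by analyzing how the spectrum transforms under the polynomial relation $\Delta[l] = I - P^l$ with $P = I - \Delta$.

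First I would reduce the statement about eigenvalue $1$ of $\Delta$ to a statement about eigenvalue $0$ of $P$. Since $\Delta = I - P$, the characteristic polynomials satisfy $\det(\Delta - \lambda I) = (-1)^n \det(P - (1-\lambda) I)$, so the algebraic multiplicity of $1$ as an eigenvalue of $\Delta$ equals the algebraic multiplicity of $0$ as an eigenvalue of $P$. By the same reasoning applied to $\Delta[l] = I - P^l$, the algebraic multiplicity of $1$ as an eigenvalue of $\Delta[l]$ equals the algebraic multiplicity of $0$ as an eigenvalue of $P^l$.

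Next I would invoke the spectral mapping theorem for polynomials in a matrix: if the eigenvalues of $P$, listed with algebraic multiplicity, are $\gamma_1,\ldots,\gamma_n$, then the eigenvalues of $P^l$, also listed with algebraic multiplicity, are $\gamma_1^l,\ldots,\gamma_n^l$. Since $\gamma^l = 0$ if and only if $\gamma = 0$, the algebraic multiplicity of the eigenvalue $0$ of $P^l$ equals the algebraic multiplicity of the eigenvalue $0$ of $P$. Chaining the three equalities gives
\[
m_1(\Delta) \;=\; m_0(P) \;=\; m_0(P^l) \;=\; m_1(\Delta[l]),
\]
which is the claim.

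The only subtle point, and the one I would be most careful about, is the distinction between algebraic and geometric multiplicity. Passing to $P^l$ can collapse Jordan blocks at $0$ into the zero block, so the geometric multiplicity of $0$ in $P^l$ can strictly exceed that in $P$; only the algebraic multiplicity is preserved by the spectral mapping theorem. Fortunately, the notation $m_1$ throughout the paper (compare Proposition \ref{DBasic}(iii) and Theorem \ref{D40}) refers to algebraic multiplicity, so the argument above is exactly what is needed. No additional ingredients from earlier in the paper are required beyond Theorem \ref{D142}.
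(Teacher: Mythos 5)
Your proof is correct and follows essentially the same route as the paper: both rest on Theorem \ref{D142} and the induced spectral map $\lambda\mapsto 1-(1-\lambda)^l$, which fixes the value $1$ exactly when $\lambda=1$. The only difference is cosmetic and in your favor: the paper phrases the argument via eigenfunctions of $\Delta$, while you work with $P=I-\Delta$ and invoke the spectral mapping theorem for algebraic multiplicities, which makes the multiplicity bookkeeping (the algebraic versus geometric distinction you flag) fully explicit.
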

\begin{proof}$\Gamma$ and $\Gamma[l]$ have the same vertex set, thus
both $\Delta$ and $\Delta[l] = I-(I-\Delta)^{l}$ have $n=|V|$
eigenvalues. By Theorem \ref{D142}, every eigenfunction $u_k$ for
$\Delta$ and eigenvalue $\lambda_k$ is also an eigenfunction for
$\Delta[l]$ and eigenvalue $1-(1-\lambda_k)^l$. Thus, the
corollary follows from the observation that $1-(1-\lambda_k)^l=1$
iff $\lambda_k =1$.
\end{proof}
As in \cite{BauerJost}, the relationship between the spectrum of a
graph and the spectrum of its neighborhood graphs can be exploited
to derive new eigenvalue estimates. For example we have the
following result:\begin{theo}\label{D61} Let $\Gamma$ be a graph
and $\Gamma[l]$ be its neighborhood graph of order $l\geq2$.
\begin{itemize} \item[$(i)$]If $1\leq \mathcal{A}[l]\leq
\min_{i\neq0}|\lambda_i[l]|$, then
$(\mathcal{A}[l]-1)^{\frac{1}{l}}\leq |1-\lambda_i|$ for all
$i\neq 0$, where $\mathcal{A}[l]$ is any lower bound for
$\min_{i\neq0}|\lambda_i[l]|$.  \item[$(ii)$] If $\min_{i\neq 0}
|\lambda_i[l]|\leq \mathcal{B}[l]\leq 1$, then
$(1-\mathcal{B}[l])^{\frac{1}{l}}\leq \max_i|1-\lambda_i|$, where
$\mathcal{B}[l]$ is any upper bound for $\min_{i\neq 0}
|\lambda_i[l]|$. \item[$(iii)$] If $1\leq \mathcal{C}[l]\leq
\max_i|\lambda_i[l]|$, then $(\mathcal{C}[l]-1)^\frac{1}{l}\leq
\max_i|1-\lambda_i|$, where $\mathcal{C}[l]$ is any lower bound
for $\max_i|\lambda_i[l]|$.\item[$(iv)$]If
$\max_i|\lambda_i[l]|\leq \mathcal{D}[l]\leq 1$, then
$(1-\mathcal{D}[l])^\frac{1}{l}\leq |1-\lambda_i|$ for all $i$,
where $\mathcal{D}[l]$ is any upper bound for
$\max_i|\lambda_i[l]|$.
\end{itemize}
\end{theo}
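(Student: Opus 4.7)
The backbone of the proof is Theorem \ref{D142}, which gives the spectral mapping $\lambda_i[l]=1-(1-\lambda_i)^l$. From this I immediately extract the identity $|1-\lambda_i[l]|=|1-\lambda_i|^l$ and, via the ordinary and reverse triangle inequalities applied to $\lambda_i[l]=1-(1-\lambda_i)^l$, the two-sided bound
\[
\bigl||1-\lambda_i|^l-1\bigr|\;\le\;|\lambda_i[l]|\;\le\;1+|1-\lambda_i|^l.
\]
All four estimates in Theorem \ref{D61} follow by feeding the hypothesis into one of these two inequalities and rearranging.

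For part $(i)$, I pick an arbitrary $i\ne 0$ and combine $\mathcal{A}[l]\le |\lambda_i[l]|$ with the upper estimate above, which gives $\mathcal{A}[l]-1\le|1-\lambda_i|^l$; taking the $l$-th root (legal because $\mathcal{A}[l]\ge 1$) yields the claim. For part $(iii)$, I apply the same upper estimate to the index realizing $\max_i|\lambda_i[l]|$, obtaining $\mathcal{C}[l]-1\le|1-\lambda_{i^\ast}|^l\le\max_i|1-\lambda_i|^l$, and take the $l$-th root.

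Parts $(ii)$ and $(iv)$ use the lower estimate $|\lambda_i[l]|\ge 1-|1-\lambda_i|^l$, which is the useful form of the reverse triangle inequality when $|1-\lambda_i|\le 1$. For $(ii)$, choose $i^\ast$ realizing $\min_{i\ne 0}|\lambda_i[l]|$; then $1-|1-\lambda_{i^\ast}|^l\le|\lambda_{i^\ast}[l]|\le \mathcal{B}[l]$, so $(1-\mathcal{B}[l])^{1/l}\le|1-\lambda_{i^\ast}|\le\max_i|1-\lambda_i|$. For $(iv)$, the same lower estimate applied to an arbitrary $i$ gives $1-|1-\lambda_i|^l\le\mathcal{D}[l]$, hence $(1-\mathcal{D}[l])^{1/l}\le|1-\lambda_i|$.

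The only technical nuisance is that the lower bound $|\lambda_i[l]|\ge 1-|1-\lambda_i|^l$ is vacuous when $|1-\lambda_i|>1$, so parts $(ii)$ and $(iv)$ require a brief case split. However, in that regime the right-hand sides of the claimed inequalities are automatically dominated: since $\mathcal{B}[l],\mathcal{D}[l]\le 1$, one has $(1-\mathcal{B}[l])^{1/l},(1-\mathcal{D}[l])^{1/l}\le 1<|1-\lambda_i|$, so the conclusion holds trivially. This case distinction is the one step that is easy to overlook, and I would flag it explicitly at the end of the argument rather than burying it in the algebra; beyond this, the proof reduces to bookkeeping with the spectral mapping and the triangle inequality.
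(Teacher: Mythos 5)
Your proof is correct and follows essentially the same route as the paper: apply the spectral mapping $\lambda_i[l]=1-(1-\lambda_i)^l$ from Theorem \ref{D142} and then the triangle inequality for $(i)$, $(iii)$ and the reverse triangle inequality for $(ii)$, $(iv)$. The case split you flag for $(ii)$ and $(iv)$ is harmless but not actually needed, since $1-|1-\lambda_i|^l\le|\lambda_i[l]|$ holds for every $i$ (the right-hand side merely becomes negative when $|1-\lambda_i|>1$), so the rearrangement $1-\mathcal{D}[l]\le|1-\lambda_i|^l$ goes through uniformly.
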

\begin{proof}
$(i)$. From Theorem \ref{D142} we have $\lambda_i[l]=
1-(1-\lambda_i)^l$. Thus, we have for all $i\neq 0$
\[\mathcal{A}[l]\leq |1-(1-\lambda_i)^l| \leq 1+|1-\lambda_i|^l,
\] where we used the triangle inequality.\\
$(ii)$. We have
\[\mathcal{B}[l]\geq \min_{i\neq0}|1-(1-\lambda_i)^l)|\geq
1-(\max_i|1-\lambda_i|)^l,\] where we used the reverse triangle
inequality.
\\ $(iii)$.  We have \[\mathcal{C}[l] \leq
\max_i|1-(1-\lambda_i)^l|\leq 1+(\max_i|1-\lambda_i|)^l,\] where
we used again the triangle inequality. \\ $(iv)$. For all $i$ we
have \[\mathcal{D}[l] \geq |1-(1-\lambda_i)^l| \geq 1 -
|1-\lambda_i|^l,\]where we used again the reverse triangle
inequality.
\end{proof}
One can exploit the neighborhood graph technique further. For
instance, by using similar arguments as in \cite{BauerJost} one
can obtain estimates for $\Re(\lambda_i)$ and $|\Im(\lambda_i)|$.


\begin{thebibliography}{10}

\bibitem{BauerAtay}
F.~Bauer and F.~Atay.
\newblock {On the synchronizability of coupled oscillators in directed and
  signed networks}.
\newblock {\em In preparation}.

\bibitem{BAJ}
F.~Bauer, F.~Atay, and J.~Jost.
\newblock {Synchronization in discrete-time networks with general pairwise
  coupling}.
\newblock {\em Nonlinearity}, 22:2333--2351, 2009.

\bibitem{Bauer10}
F.~Bauer, F.~Atay, and J.~Jost.
\newblock {Synchronized chaos in networks of simple units}.
\newblock {\em Europhysics Letters}, 2(20002), 2010.

\bibitem{BauerJost}
F.~Bauer and J.~Jost.
\newblock {Bipartite and neighborhood graphs and the spectrum of the normalized
  graph Laplacian}.
\newblock {\em To appear in Communication in Analysis and Geometry}.

\bibitem{BJL}
F.~Bauer and J.~Jost and S.~Liu.
\newblock {Ollivier-Ricci curvature and the spectrum of the normalized graph Laplace operator}.
\newblock {\em submitted, http://arxiv.org/abs/1105.3803}.

\bibitem{Brualdi91}
R.~Brualdi and H.~Ryser.
\newblock {\em {Combinatorial Matrix Theory}}.
\newblock Cambridge University Press, 1991.

\bibitem{Chung96}
F.~Chung.
\newblock {Laplacians of graphs and Cheeger inequalities}.
\newblock {\em Combinatorics, Paul {Erd\"{o}s} is Eighty}, 2:157--172, 1996.

\bibitem{Chung97}
F.~Chung.
\newblock {\em {Spectral Graph Theory}}, volume~92.
\newblock American Mathematical Society, 1997.

\bibitem{Chungdirected}
F.~Chung.
\newblock {Laplacians and the Cheeger inequality for directed graphs}.
\newblock {\em Annals of Combinatorics}, 9(1):1--19, 2005.

\bibitem{Chung-Grigoryan-Yau96}
F.~Chung, A.~Grigoryan, and S.~Yau.
\newblock {Upper bounds for eigenvalues of the discrete and continuous Laplace
  operators}.
\newblock {\em Advances in Mathematics}, 117:165--178, 1996.

\bibitem{Chung-Yau95}
F.~Chung and S.~Yau.
\newblock {Eigenvalues of graphs and Sobolev inequalities}.
\newblock {\em Combinatorics, Probability and Computing}, 4:11--26, 1995.

\bibitem{Chung-Yau98}
F.~Chung and S.~Yau.
\newblock {Eigenvalue inequalities for graphs and convex subgraphs}.
\newblock {\em Communications in Analysis and Geometry}, 5:575--624, 1998.

\bibitem{Diaconis91}
P.~Diaconis and D.~Stroock.
\newblock {Geometric bounds for eigenvalues of Markov chains}.
\newblock {\em The Annals of Applied Probability}, 1:36--61, 1991.

\bibitem{Dmitriev45}
M.~Dmitriev and E.~Dynkin.
\newblock {On characteristic roots of stochastic matrices}.
\newblock {\em National Academy of Sciences of Armenia}, 49(3):159--162, 1945.

\bibitem{Fan50}
K. Fan.
\newblock {On a Theorem of Weyl concerning eigenvalues of linear transformations II}.
\newblock {\em Proceedings of the National Academy of Sciences of the United States of Americ}, 36(1):31--35, 1950.

\bibitem{Grigoryan09}
A.~Grigoryan.
\newblock {\em {Analysis on Graphs}}.
\newblock Lecture Notes, University Bielefeld, 2009.

\bibitem{Hardy52}
G. Hardy and J. Littlewood and G. Polya.
\newblock {\em {Inequalities}.
\newblock Cambridge University Press, 1952.}

\bibitem{Horn90}
R.~Horn and C.~Johnson.
\newblock {\em {Matrix Analysis}}.
\newblock Cambridge University Press, 1990.

\bibitem{Jost01}
J.~Jost and M.~Joy.
\newblock {Spectral properties and synchronization in coupled map lattices}.
\newblock {\em Physical Review E}, 65:16201--16209, 2001.

\bibitem{Landau81}
H.~Landau and A. Odlyzko
\newblock {Bounds for eigenvalues of certain
stochastic matrices}.
\newblock {\em Linear Algebra Appl.}, 38 5-15, 1981.

\bibitem{Marshall79}
A.~Marshall and I.~Olkin.
\newblock {\em {Inequalities: Theory of Majorization and its Applications}}.
\newblock Academic Press, 1979.

\bibitem{Minc}
H.~Minc.
\newblock {\em Nonnegative Matrices}.
\newblock Wiley, 1988.

\bibitem{Taussky49}
O.~Taussky.
\newblock A recurring theorem on determinants.
\newblock {\em The American Mathematical Monthly}, 56(10):672--676, 1949.

\bibitem{Wu}
C.~Wu.
\newblock {On bounds of extremal eigenvalues of irreducible and m-reducible
  matrices}.
\newblock {\em Linear Algebra and its Applications}, 402:29--45, 2005.

\bibitem{Wu05}
C.~Wu.
\newblock {On Rayleigh-Ritz ratios of a generalized Laplacian matrix of
  directed graphs}.
\newblock {\em Linear Algebra and its Applications}, 402:207--227, 2005.

\end{thebibliography}
\end{document}